\documentclass[a4paper]{amsart}
\usepackage[utf8]{inputenc}
\usepackage[T1]{fontenc}
\usepackage[english]{babel}

\usepackage{amsfonts}
\usepackage{amsmath}
\usepackage{amsrefs}
\usepackage{amssymb}
\usepackage{amstext}
\usepackage{amsthm}

\usepackage{geometry}
\usepackage{fullpage}

\usepackage[shortlabels]{enumitem}
\usepackage{float}
\usepackage{graphicx}
\usepackage{hyperref}
\usepackage{listings}
\usepackage{mathrsfs}
\usepackage{mathtools}
\usepackage{multicol}
\usepackage{shuffle}
\usepackage{wasysym}

\usepackage{tikz}
\usepackage{tikz-cd}
\usetikzlibrary{arrows, backgrounds, calc, chains, decorations, patterns, positioning, shapes}


\newcommand{\area}{\mathsf{area}}
\newcommand{\dinv}{\mathsf{dinv}}

\newcommand{\shift}{\mathsf{shift}}

\newcommand{\D}{\mathsf{D}} 
\newcommand{\LD}{\mathsf{LD}} 
\newcommand{\SQ}{\mathsf{SQ}} 
\newcommand{\LSQ}{\mathsf{LSQ}} 



\newcommand{\dw}{\mathsf{dw}}
\newcommand{\maj}{\mathsf{maj}}

\DeclareFontFamily{U}{bigshuffle}{}
\DeclareFontShape{U}{bigshuffle}{m}{n}{
	<5-8> s*[1.7] shuffle7
	<8->  s*[1.7] shuffle10
}{}
\DeclareSymbolFont{BigShuffle}{U}{bigshuffle}{m}{n}
\DeclareMathSymbol\bigshuffle{\mathop}{BigShuffle}{"001}
\DeclareMathSymbol\bigcshuffle{\mathop}{BigShuffle}{"002}

\renewcommand{\H}{\widetilde{H}}

\newcommand{\N}{\mathbb{N}}

\newcommand{\qbinom}[2]{\genfrac{[}{]}{0pt}{}{#1}{#2}}


\makeatletter

\pgfkeys{
	/tikz/sharp angle/.code={%
		\pgfsetarrowoptions{sharp >}{#1}%
		\pgfsetarrowoptions{sharp <}{-#1}%
	},
	/tikz/sharp > angle/.code={%
		\pgfsetarrowoptions{sharp >}{#1}%
	},
	/tikz/sharp < angle/.code={%
		\pgfsetarrowoptions{sharp <}{#1}%
	},
	/tikz/sharp protrude/.code=\csname if#1\endcsname\qrr@tikz@sharp@z@-0.05\p@\else\qrr@tikz@sharp@z@\z@\fi,
	/tikz/sharp protrude/.default=true
}

\newdimen\qrr@tikz@sharp@z@
\qrr@tikz@sharp@z@\z@
\pgfarrowsdeclare{sharp >}{sharp >}{%
	\edef\pgf@marshal{\noexpand\pgfutil@in@{and}{\pgfgetarrowoptions{sharp >}}}%
	\pgf@marshal
	\ifpgfutil@in@
	\edef\pgf@tempa{\pgfgetarrowoptions{sharp >}}
	\expandafter\qrr@tikz@sharp@parse\pgf@tempa\@qrr@tikz@sharp@parse
	\else
	\qrr@tikz@sharp@parse\pgfgetarrowoptions{sharp >}and-\pgfgetarrowoptions{sharp >}\@qrr@tikz@sharp@parse
	\fi
	\pgfmathparse{max(\pgf@tempa,\pgf@tempb,0)}%
	\let\qrr@tikz@sharp@max\pgfmathresult
	\pgfmathsetlength\pgf@xa{.5*\pgflinewidth * tan(\qrr@tikz@sharp@max)}%
	\pgfarrowsleftextend{+\pgf@xa}%
	\pgfarrowsrightextend{+\pgf@xa}%
}{%
	\edef\pgf@marshal{\noexpand\pgfutil@in@{and}{\pgfgetarrowoptions{sharp >}}}%
	\pgf@marshal
	\ifpgfutil@in@
	\edef\pgf@tempa{\pgfgetarrowoptions{sharp >}}
	\expandafter\qrr@tikz@sharp@parse\pgf@tempa\@qrr@tikz@sharp@parse
	\else
	\qrr@tikz@sharp@parse\pgfgetarrowoptions{sharp >}and-\pgfgetarrowoptions{sharp >}\@qrr@tikz@sharp@parse
	\fi
	\pgfmathsetlength\pgf@ya{.5*\pgflinewidth * tan(max(\pgf@tempa,\pgf@tempb,0))}%
	\pgfmathsetlength\pgf@xa{-.5*\pgflinewidth * tan(\pgf@tempa)}%
	\pgfmathsetlength\pgf@xb{-.5*\pgflinewidth * tan(\pgf@tempb)}%
	\advance\pgf@xa\pgf@ya
	\advance\pgf@xb\pgf@ya
	\ifdim\pgf@xa>\pgf@xb
	\pgftransformyscale{-1}%
	\pgf@xc\pgf@xb
	\pgf@xb\pgf@xa
	\pgf@xa\pgf@xc
	\fi
	\pgfpathmoveto{\pgfqpoint{\qrr@tikz@sharp@z@}{.5\pgflinewidth}}%
	\pgfpathlineto{\pgfqpoint{\pgf@xa}{.5\pgflinewidth}}%
	\pgfpathlineto{\pgfqpoint{\pgf@ya}{+0pt}}%
	\pgfpathlineto{\pgfqpoint{\pgf@xb}{-.5\pgflinewidth}}%
	\pgfpathlineto{\pgfqpoint{\qrr@tikz@sharp@z@}{-.5\pgflinewidth}}%
	\pgfusepathqfill
}
\pgfarrowsdeclare{sharp <}{sharp <}{%
	\edef\pgf@marshal{\noexpand\pgfutil@in@{and}{\pgfgetarrowoptions{sharp <}}}%
	\pgf@marshal
	\ifpgfutil@in@
	\edef\pgf@tempa{\pgfgetarrowoptions{sharp <}}
	\expandafter\qrr@tikz@sharp@parse\pgf@tempa\@qrr@tikz@sharp@parse
	\else
	\expandafter\qrr@tikz@sharp@parse\pgfgetarrowoptions{sharp <}and-\pgfgetarrowoptions{sharp <}\@qrr@tikz@sharp@parse
	\fi
	\pgfmathparse{max(\pgf@tempa,\pgf@tempb,0)}%
	\let\qrr@tikz@sharp@max\pgfmathresult
	\pgfmathsetlength\pgf@xa{.5*\pgflinewidth * tan(\qrr@tikz@sharp@max)}%
	\pgfarrowsleftextend{+\pgf@xa}%
	\pgfarrowsrightextend{+\pgf@xa}%
}{%
	\edef\pgf@marshal{\noexpand\pgfutil@in@{and}{\pgfgetarrowoptions{sharp <}}}%
	\pgf@marshal
	\ifpgfutil@in@
	\edef\pgf@tempa{\pgfgetarrowoptions{sharp <}}
	\expandafter\qrr@tikz@sharp@parse\pgf@tempa\@qrr@tikz@sharp@parse
	\else
	\expandafter\qrr@tikz@sharp@parse\pgfgetarrowoptions{sharp <}and-\pgfgetarrowoptions{sharp <}\@qrr@tikz@sharp@parse
	\fi
	\pgfmathsetlength\pgf@ya{.5*\pgflinewidth * tan(max(\pgf@tempa,\pgf@tempb,0))}%
	\pgfmathsetlength\pgf@xa{-.5*\pgflinewidth * tan(\pgf@tempa)}%
	\pgfmathsetlength\pgf@xb{-.5*\pgflinewidth * tan(\pgf@tempb)}%
	\advance\pgf@xa\pgf@ya
	\advance\pgf@xb\pgf@ya
	\ifdim\pgf@xa>\pgf@xb
	\pgftransformyscale{-1}%
	\pgf@xc\pgf@xb
	\pgf@xb\pgf@xa
	\pgf@xa\pgf@xc
	\fi
	\pgfpathmoveto{\pgfqpoint{\qrr@tikz@sharp@z@}{.5\pgflinewidth}}%
	\pgfpathlineto{\pgfqpoint{\pgf@xa}{.5\pgflinewidth}}%
	\pgfpathlineto{\pgfqpoint{\pgf@ya}{+0pt}}%
	\pgfpathlineto{\pgfqpoint{\pgf@xb}{-.5\pgflinewidth}}%
	\pgfpathlineto{\pgfqpoint{\qrr@tikz@sharp@z@}{-.5\pgflinewidth}}%
	\pgfusepathqfill
}
\def\qrr@tikz@sharp@parse#1and#2\@qrr@tikz@sharp@parse{\def\pgf@tempa{#1}\def\pgf@tempb{#2}}

\makeatother

\newcommand\multiset[2]%
{\mathchoice{\left(\kern-0.4em{\binom{#1}{#2}}\kern-0.4em\right)}
	{\bigl(\kern-0.2em{\binom{#1}{#2}}\kern-0.2em\bigr)}
	{\bigl(\kern-0.2em{\binom{#1}{#2}}\kern-0.2em\bigr)}
	{\bigl(\kern-0.2em{\binom{#1}{#2}}\kern-0.2em\bigr)}}

\let\existstemp\exists \renewcommand*{\exists}{\mathop \existstemp}
\let\foralltemp\forall \renewcommand*{\forall}{\mathop \foralltemp}

\def\quotient#1#2{\raise1ex\hbox{$#1$}\Big/\lower1ex\hbox{$#2$}}

\newcommand{\<}{\langle}
\renewcommand{\>}{\rangle}


\setlength{\parindent}{0 pt} 
\setlength{\parskip}{0.15 cm} 

\theoremstyle{plain}

\theoremstyle{definition}
\newtheorem{theorem}{Theorem}[section]
\newtheorem{conjecture}[theorem]{Conjecture}
\newtheorem{corollary}[theorem]{Corollary}
\newtheorem{definition}[theorem]{Definition}

\newtheorem{proposition}[theorem]{Proposition}

\theoremstyle{remark}
\newtheorem{remark}[theorem]{Remark}

\makeatletter%
\renewenvironment{proof}[1][\proofname]{%
	\par\pushQED{\qed}\normalfont%
	\topsep6\p@\@plus6\p@\relax
	\trivlist\item[\hskip\labelsep\bfseries#1\@addpunct{.}]%
	\ignorespaces
}{%
	\qedhere 
}
\makeatother

\makeatletter%
\DeclareRobustCommand*{\bfseries}{%
	\not@math@alphabet\bfseries\mathbf
	\fontseries\bfdefault\selectfont
	\boldmath
}
\makeatother


\title{A valley version of the Delta square conjecture}

\author{Alessandro Iraci}
\address{Universit\'e Libre de Bruxelles (ULB)\\D\'epartement de Math\'ematique\\ Boulevard du Triomphe, B-1050 Bruxelles\\ Belgium}\email{airaci@ulb.ac.be}

\author{Anna Vanden Wyngaerd}
\address{Universit\'e Libre de Bruxelles (ULB)\\D\'epartement de Math\'ematique\\ Boulevard du Triomphe, B-1050 Bruxelles\\ Belgium}\email{anvdwyng@ulb.ac.be}

\begin{document}
	
\begin{abstract}
Inspired by \cite{Qiu-Wilson-2019} and \cite{DAdderio-Iraci-VandenWyngaerd-DeltaSquare-2019}, we formulate a \emph{generalised Delta square conjecture} (valley version). Furthermore, we use similar techniques as in \cite{Haglund-Sergel-2019} to obtain a schedule formula for the combinatorics of our conjecture. We then use this formula to prove that the (generalised) valley version of the Delta conjecture implies our (generalised) valley version of the Delta square conjecture. This implication broadens the argument in \cite{Leven-2016}, relying on the formulation of the touching version in terms of the $\Theta_f$ operators introduced in \cite{DAdderio-Iraci-VandenWyngaerd-Theta-2019}.
\end{abstract}
	
\maketitle
\tableofcontents

\section{Introduction}
In \cite{Haglund-Remmel-Wilson-2018}, Haglund, Remmel and Wilson conjectured a combinatorial formula for $\Delta_{e_{n-k-1}}'e_n$ in terms of decorated labelled Dyck paths, which they called \emph{Delta conjecture}, after the so called delta operators $\Delta_f'$ introduced by Bergeron, Garsia, Haiman, and Tesler \cite{Bergeron-Garsia-Haiman-Tesler-Positivity-1999} for any symmetric function $f$. There are two versions of the conjecture, referred to as the \emph{rise} and the \emph{valley} version. 

In the same article \cite{Haglund-Remmel-Wilson-2018} the authors conjecture a combinatorial formula for the more general expression $\Delta_{h_m}\Delta_{e_{n-k-1}}'e_n$ in terms of decorated partially labelled Dyck paths, which we call \emph{generalised Delta conjecture} (rise version). In this paper, the authors also state a \emph{touching} refinement (where the number of times the Dyck path returns to the main diagonal is specified) of their conjecture. In \cite{DAdderio-Iraci-VandenWyngaerd-Theta-2019}, the authors introduce the $\Theta_f$ operators, and reformulate the touching version using these tools. In the present work, we will be using the latter formulation.

The Delta conjecture and its derivatives have attracted considerable attention since their formulation, see among others \cites{Wilson-Equidistribution, Rhoades-2018, Remmel-Wilson-2015, Zabrocki-Delta-Module-2019, Haglund-Rhoades-Shimozono-Advances, Garsia-Haglund-Remmel-Yoo-2019, DAdderio-Iraci-VandenWyngaerd-GenDeltaSchroeder-2019, DAdderio-Iraci-VandenWyngaerd-TheBible-2019, DAdderio-Iraci-VandenWyngaerd-DeltaSquare-2019, DAdderio-Iraci-VandenWyngaerd-Delta-t0-2018, Zabrocki-4Catalan-2016, Qiu-Wilson-2019,Haglund-Sergel-2019}. Most of the earlier work concerns the rise version, but interest in the valley version is growing. 

The special case $k=0$ of the Delta conjecture, which is known as the \emph{shuffle conjecture} \cite{HHLRU-2005}, was recently proved by Carlsson and Mellit \cite{Carlsson-Mellit-ShuffleConj-2018}. The shuffle theorem, thanks to the famous \emph{$n!$ conjecture}, now $n!$ theorem of Haiman \cite{Haiman-nfactorial-2001}, gives a combinatorial formula for the Frobenius characteristic of the $\mathfrak{S}_n$-module of diagonal harmonics studied by Garsia and Haiman.

In \cite{Loehr-Warrington-square-2007} Loehr and Warrington conjecture a combinatorial formula for $\Delta_{e_{n}}\omega(p_n)=\nabla \omega(p_n)$ in terms of labelled square paths (ending east), called the \emph{square conjecture}. The special case $\<\cdot ,e_n\>$ of this conjecture, known as the \emph{$q,t$-square}, has been proved by Can and Loehr in \cite{Can-Loehr-2006}. Recently the full square conjecture has been proved by Sergel in \cite{Leven-2016}, who showed that the shuffle theorem by Carlsson and Mellit \cite{Carlsson-Mellit-ShuffleConj-2018} implies the square conjecture (now square theorem).

In \cite{DAdderio-Iraci-VandenWyngaerd-DeltaSquare-2019} the authors conjecture a combinatorial formula for $\frac{[n-k]_t}{[n]_t}\Delta_{h_m}\Delta_{e_{n-k}}\omega(p_n)$ in terms of \emph{rise-decorated partially labelled square paths} that we call \emph{generalised Delta square conjecture} (rise version). This conjecture extends the square conjecture of Loehr and Warrington \cite{Loehr-Warrington-square-2007} (now a theorem \cite{Leven-2016}), i.e. it reduces to that one for $m=k=0$. Moreover, it extends the generalised Delta conjecture in the sense that on decorated partially labelled Dyck paths gives the same combinatorial statistics.

In \cite{Qiu-Wilson-2019}, the authors state a \emph{generalised Delta conjecture} (valley version), extending the valley version of the Delta conjecture. They also prove the case $q=0$, extending the results in \cite{DAdderio-Iraci-VandenWyngaerd-Delta-t0-2018}. 

Inspired by \cite{Qiu-Wilson-2019} and \cite{DAdderio-Iraci-VandenWyngaerd-DeltaSquare-2019}, we formulate two statements that can reasonably be called the \emph{generalised Delta square conjecture} (valley version). One is a combinatorial interpretation of the symmetric function \[\frac{[n-k]_q}{[n]_q}\Delta_{h_m}\Delta_{e_{n-k}}\omega(p_n)= \frac{[n]_t}{[n-k]_t} \Delta_{h_m} \Theta_{e_k} \nabla \omega(p_{n-k})\] (notice the swapping of $q$ and $t$ with respect to the rise version). The other is an interpretation of $\Delta_{h_m} \Theta_{e_k} \nabla \omega(p_{n-k})$, for which the combinatorics seems to be nicer and does not have the multiplicative factor.  

Next, we adapt the schedule formula in \cite{Haglund-Sergel-2019} to objects with repeated labels, which enabled us to incorporate the monomials into the formula. This allowed us to obtain a schedule formula for the combinatorics of our conjecture and to deal with the symmetric functions more easily. As a byproduct, our formula provides a new factorisation of all other previous schedule formulae concerning Dyck or square paths.

Finally, we use this formula to prove that the (generalised) valley version of the Delta conjecture implies our (generalised) valley version of the Delta square conjecture. This implication broadens the argument in \cite{Leven-2016}, relying on the formulation of the touching version in terms of the $\Theta_f$ operators. 
\section{Combinatorial definitions}
\begin{definition}
	A \emph{square path} of size $n$ is a lattice paths going from $(0,0)$ to $(n,n)$ consisting of east or north unit steps, always ending with an east step. The set of such paths is denoted by $\SQ(n)$. We call \emph{shift} of a square path the maximum value $s$ such that  the path intersect the line $y=x-s$ in at least one point. We refer to the line $y=x+i-s$ as \emph{$i$-th diagonal} and to the line $x=y$, (the $s$-th diagonal) as the \emph{main diagonal}. A \emph{Dyck path} is a square path whose shift is $0$. The set of Dyck paths is denoted by $\D(n)$. Of course $\D(n)\subseteq \SQ(n)$.  
\end{definition}

For example, the path in Figure~\ref{fig:labelled-square-path} has shift $3$. 

\begin{figure}[!ht]
	\centering
	\begin{tikzpicture}[scale = 0.6]
		\draw[step=1.0, gray!60, thin] (0,0) grid (8,8);
		\draw[gray!60, thin] (3,0) -- (8,5);
		
		\draw[blue!60, line width=1.6pt] (0,0) -- (0,1) -- (1,1) -- (2,1) -- (3,1) -- (4,1) -- (4,2) -- (5,2) -- (5,3) -- (5,4) -- (6,4) -- (6,5) -- (6,6) -- (6,7) -- (7,7) -- (7,8) -- (8,8);
		
		
		\node at (0.5,0.5) {$2$};
		\draw (0.5,0.5) circle (.4cm); 
		\node at (4.5,1.5) {$1$};
		\draw (4.5,1.5) circle (.4cm); 
		\node at (5.5,2.5) {$2$};
		\draw (5.5,2.5) circle (.4cm); 
		\node at (5.5,3.5) {$4$};
		\draw (5.5,3.5) circle (.4cm); 
		\node at (6.5,4.5) {$1$};
		\draw (6.5,4.5) circle (.4cm); 
		\node at (6.5,5.5) {$3$};
		\draw (6.5,5.5) circle (.4cm); 
		\node at (6.5,6.5) {$4$};
		\draw (6.5,6.5) circle (.4cm); 
		\node at (7.5,7.5) {$1$};
		\draw (7.5,7.5) circle (.4cm);
	\end{tikzpicture}
	\caption{Example of an element in $\LSQ(8)$.}
	\label{fig:labelled-square-path}
\end{figure}
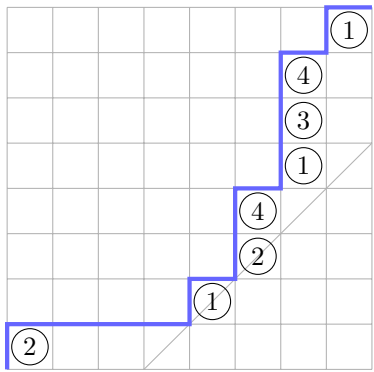

\begin{definition}
	Let $\pi$ be a square path of size $n$. We define its \emph{area word} to be the sequence of integers $a(\pi) = (a_1(\pi), a_2(\pi), \cdots, a_n(\pi))$ such that the $i$-th vertical step of the path starts from the diagonal $y=x+a_i(\pi)$. For example the path in Figure~\ref{fig:labelled-square-path} has area word $(0, \, -\!3, \, -\!3, \, -\!2, \, -\!2, \, -\!1, \, 0, \, 0)$.
\end{definition}

\begin{definition}
	A \emph{partial labelling} of a square path $\pi$ of size $n$ is an element $w \in \mathbb N^n$ such that
	\begin{itemize}
		\item if $a_i(\pi) > a_{i-1}(\pi)$, then $w_i > w_{i-1}$,
		\item $a_1(\pi) = 0 \implies w_1 > 0$,
		\item there exists an index $i$ such that $a_i(\pi) = - \shift(\pi)$ and $w_i(\pi) > 0$,
	\end{itemize}
	i.e. if we label the $i$-th vertical step of $\pi$ with $w_i$, then the labels appearing in each column of $\pi$ are strictly increasing from bottom to top, with the additional restrictions that, if the path starts north then the first label cannot be a $0$, and that there is at least one positive label lying on the base diagonal.
	
	We omit the word \emph{partial} if the labelling is composed of strictly positive labels only. 
\end{definition}

\begin{definition}
	A \emph{(partially) labelled square path} (resp. \emph{Dyck path}) is a pair $(\pi, w)$ where $\pi$ is a square path (resp. Dyck path) and $w$ is a (partial) labelling of $\pi$. We denote by $\LSQ(m,n)$ (resp. $\LD(m,n)$) the set of labelled square paths (resp. Dyck paths) of size $m+n$ with exactly $n$ positive labels, and thus exactly $m$ labels equal to $0$.
\end{definition}

%
%

The following definitions will be useful later on.

\begin{definition}
	Let $w$ be a labelling of square path of size $n$. We define $x^w \coloneqq \prod_{i=1}^{n} x_{w_i} \rvert_{x_0 = 1}$.
\end{definition}

The fact that we set $x_0 = 1$ explains the use of the expression \emph{partially labelled}, as the labels equal to $0$ do not contribute to the monomial.

Sometimes we will, with an abuse of notation, write $\pi$ as a shorthand for a labelled path $(\pi, w)$. In that case, we use the identification $x^\pi \coloneqq x^w$.

Now we want to extend our sets introducing some decorations.

\begin{definition}
	\label{def:valley}
	The \emph{contractible valleys} of a labelled square path $\pi$ are the indices $1 \leq i \leq n$ such that one of the following holds:
	\begin{itemize}
		\item $i = 1$ and either $a_1(\pi) < -1$, or $a_1(\pi) = -1$ and $w_1 > 0$,
		\item $i > 1$ and $a_i(\pi) < a_{i-1}(\pi)$,
		\item $i > 1$ and $a_i(\pi) < a_{i-1}(\pi) \land w_i > w_{i-1}$.
	\end{itemize}
	
	We define \[ v(\pi, w) \coloneqq \{1 \leq i \leq n \mid i \text{ is a contractible valley} \}, \] corresponding to the set of vertical steps that are directly preceded by a horizontal step and, if we were to remove that horizontal step and move it after the vertical step, we would still get a square path with a valid labelling, with the additional restriction that if the vertical step is in the first row and it is attached to a $0$ label, then we require that it is preceded by at least two horizontal steps.
\end{definition}

\begin{remark}
	These slightly contrived conditions on the steps labelled $0$ have a more natural formulation in terms of steps labelled $\infty$, see Section~\ref{sec:concluding}
\end{remark}

This extends the definition of contractible valley given in \cite{Haglund-Remmel-Wilson-2018} to (partially) labelled square paths.

\begin{definition}
	\label{def:rise}
	The \emph{rises} of a (labelled) square path $\pi$ are the indices \[ r(\pi) \coloneqq \{2 \leq i \leq n \mid a_i(\pi) > a_{i-1}(\pi)\}, \] i.e. the vertical steps that are directly preceded by another vertical step. 
\end{definition}
	
\begin{definition}
	A \emph{valley-decorated (partially) labelled square path} is a triple $(\pi, w, dv)$ where $(\pi, w)$ is a (partially) labelled square path and $dv \subseteq v(\pi, w)$. A \emph{rise-decorated (partially) labelled square path} is a triple $(\pi, w, dr)$ where $(\pi, w)$ is a (partially) labelled square path and $dr \subseteq r(\pi)$.
\end{definition}

Again, we will often write $\pi$ as a shorthand for the corresponding triple $(\pi, w, dv)$ or $(\pi, w, dr)$.

We denote by $\LSQ(m,n)^{\bullet k}$ (resp. $\LSQ(m,n)^{\ast k}$) the set of partially labelled valley-decorated (resp. rise-decorated) square paths of size $m+n$ with $n$ positive labels and $k$ decorated contractible valleys (resp. decorated rises). We denote by $\LD(m,n)^{\bullet k}$ (resp. $\LD(m,n)^{\ast k}$) the corresponding subsets of Dyck paths. 

We also define $\LSQ'(m,n)^{\bullet k}$ as the set of paths in $\LSQ(m,n)^{\bullet k}$ such that there exists an index $i$ such that $a_i(\pi) = - \shift(\pi)$ and $i \not \in dv \land w_i(\pi) > 0$, i.e. there is at least one positive label lying on the bottom-most diagonal that is not a decorated valley. The importance of this set will be evident later in the paper.

Finally, we sometimes omit writing $m$ or $k$ when they are equal to $0$. Notice that, because of the restrictions we have on the labellings and the decorations, the only path with $n=0$ is the empty path, for which also $m=0$ and $k=0$.

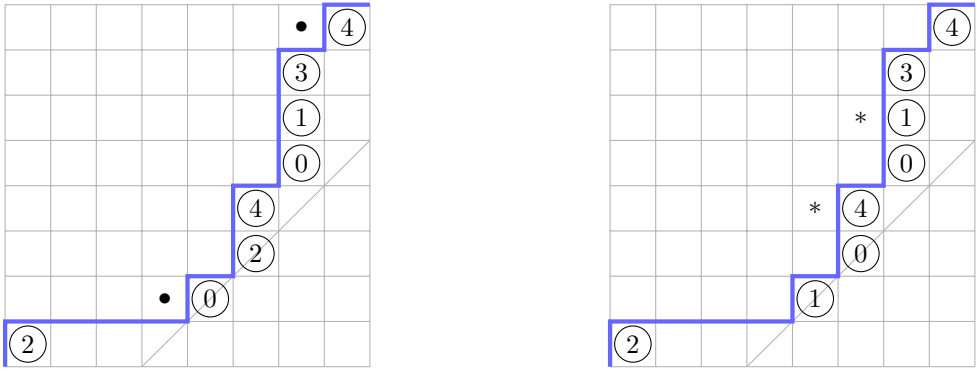
\begin{figure}[!ht]
	\begin{minipage}{0.5\textwidth}
		\centering
		\begin{tikzpicture}[scale = 0.6]
			\draw[step=1.0, gray!60, thin] (0,0) grid (8,8);
			\draw[gray!60, thin] (3,0) -- (8,5);
			
			\draw[blue!60, line width=1.6pt] (0,0) -- (0,1) -- (1,1) -- (2,1) -- (3,1) -- (4,1) -- (4,2) -- (5,2) -- (5,3) -- (5,4) -- (6,4) -- (6,5) -- (6,6) -- (6,7) -- (7,7) -- (7,8) -- (8,8);
			
			\node at (3.5,1.5) {$\bullet$};
			\node at (6.5,7.5) {$\bullet$};
			
			\node at (0.5,0.5) {$2$};
			\draw (0.5,0.5) circle (.4cm); 
			\node at (4.5,1.5) {$0$};
			\draw (4.5,1.5) circle (.4cm); 
			\node at (5.5,2.5) {$2$};
			\draw (5.5,2.5) circle (.4cm); 
			\node at (5.5,3.5) {$4$};
			\draw (5.5,3.5) circle (.4cm); 
			\node at (6.5,4.5) {$0$};
			\draw (6.5,4.5) circle (.4cm); 
			\node at (6.5,5.5) {$1$};
			\draw (6.5,5.5) circle (.4cm); 
			\node at (6.5,6.5) {$3$};
			\draw (6.5,6.5) circle (.4cm); 
			\node at (7.5,7.5) {$4$};
			\draw (7.5,7.5) circle (.4cm);
		\end{tikzpicture}
	\end{minipage}%
	\begin{minipage}{0.5\textwidth}
		\centering
		\begin{tikzpicture}[scale = 0.6]
			\draw[step=1.0, gray!60, thin] (0,0) grid (8,8);
			\draw[gray!60, thin] (3,0) -- (8,5);
			
			\draw[blue!60, line width=1.6pt] (0,0) -- (0,1) -- (1,1) -- (2,1) -- (3,1) -- (4,1) -- (4,2) -- (5,2) -- (5,3) -- (5,4) -- (6,4) -- (6,5) -- (6,6) -- (6,7) -- (7,7) -- (7,8) -- (8,8);
			
			\node at (4.5,3.5) {$\ast$};			
			\node at (5.5,5.5) {$\ast$};
			
			\node at (0.5,0.5) {$2$};
			\draw (0.5,0.5) circle (.4cm); 
			\node at (4.5,1.5) {$1$};
			\draw (4.5,1.5) circle (.4cm); 
			\node at (5.5,2.5) {$0$};
			\draw (5.5,2.5) circle (.4cm); 
			\node at (5.5,3.5) {$4$};
			\draw (5.5,3.5) circle (.4cm); 
			\node at (6.5,4.5) {$0$};
			\draw (6.5,4.5) circle (.4cm); 
			\node at (6.5,5.5) {$1$};
			\draw (6.5,5.5) circle (.4cm); 
			\node at (6.5,6.5) {$3$};
			\draw (6.5,6.5) circle (.4cm); 
			\node at (7.5,7.5) {$4$};
			\draw (7.5,7.5) circle (.4cm);
		\end{tikzpicture}
	\end{minipage}
	\caption{Example of an element in $\LSQ(2,6)^{\bullet 2}$ (left) and an element in $\LSQ(2,6)^{\ast 2}$ (right).}
	\label{fig:decorated-square-paths}
\end{figure}

We define two statistics on this set that reduce to the ones defined in \cite{Loehr-Warrington-square-2007} when $m=0$ and $k=0$. 

\begin{definition}
	\label{def:area}
	Let $(\pi, w, dr) \in \LSQ(m,n)^{\ast k}$ and $s$ be its shift. We define 
	\[ \area(\pi, w, dr) \coloneqq \sum_{i \not \in dr} (a_i(\pi) + s), \]
	i.e. the number of whole squares between the path and the base diagonal that are not in rows containing a decorated rise.
	
	For $(\pi, w, dv) \in \LSQ(m,n)^{\bullet k}$, we define $\area(\pi, w, dv) \coloneqq \area(\pi, w, \varnothing) \in \LSQ(m,n)^{\ast 0}$.
\end{definition}

For example, the path in Figure~\ref{fig:decorated-square-paths} (left) has area $8$. Notice that the area does not depend on the labelling.

\begin{definition}
	\label{def:dinv}
	Let $(\pi, w, dv) \in \LSQ(m,n)^{\bullet k}$. For $1 \leq i < j \leq n$, the pair $(i,j)$ is an \emph{inversion} if
	\begin{itemize}
		\item either $a_i(\pi) = a_j(\pi)$ and $w_i < w_j$ (\emph{primary inversion}),
		\item or $a_i(\pi) = a_j(\pi) + 1$ and $w_i > w_j$ (\emph{secondary inversion}),
	\end{itemize}
	where $w_i$ denotes the $i$-th letter of $w$, i.e. the label of the vertical step in the $i$-th row. Then we define 
	\begin{align*}
		\dinv(\pi) \coloneqq \# \{ 1 \leq i < j \leq n \mid (i,j) \text{ inversion } \land j \not \in dv \} + \#\{1 \leq i \leq n \mid a_i(\pi) < 0 \land w_i > 0 \} - \# dv
	\end{align*}
	where again $\pi$ is a shorthand for $(\pi, w, dv)$.
	
	For $(\pi, w, dr) \in \LSQ(m,n)^{\ast k}$, we define $\dinv(\pi, w, dr) \coloneqq \dinv(\pi, w, \varnothing) \in \LSQ(m,n)^{\bullet 0}$.
\end{definition}

We refer to the middle term, counting the nonzero labels below the main diagonal, as \emph{bonus} or \emph{tertiary dinv}.

For example, the path in Figure~\ref{fig:decorated-square-paths} (right) has dinv equal to $3$: $1$ primary inversion in which the leftmost label is not a decorated valley, i.e. $(1,7)$; $1$ secondary inversion in which the leftmost label is not a decorated valley, i.e. $(1,6)$; $3$ bonus dinv, coming from the rows $3$, $4$, and $6$; $2$ decorated valleys. 

It is easy to check that if $j \in dv$ then either there exists some inversion $(i,j)$ or $a_j < 0$. This means that if $m=0$ the dinv is always non-negative. In fact, thanks to the condition on the first row, the dinv is also non-negative for $m>0$, and also if $\pi \in \LSQ'(m,n)^{\bullet k}$ and it is not a Dyck path, then the dinv is necessarily strictly positive.

Finally, we recall two classical definitions.

\begin{definition}
	Let $p_1, \dots, p_k$ be a sequence of integers. We define its \emph{descent set} \[ \mathsf{Des}(p_1, \dots, p_k) \coloneqq \{ 1 \leq i \leq k-1 \mid p_i > p_{i+1} \} \] and its \emph{major index} $\maj(p_1, \dots, p_k)$ to be the sum of the elements of the descent set.
\end{definition}

\section{Symmetric functions} 
For all the undefined notations and the unproven identities, we refer to \cite{DAdderio-Iraci-VandenWyngaerd-TheBible-2019}*{Section~1}, where definitions, proofs and/or references can be found. 

We denote by $\Lambda$ the graded algebra of symmetric functions with coefficients in $\mathbb{Q}(q,t)$, and by $\<\, , \>$ the \emph{Hall scalar product} on $\Lambda$, defined by declaring that the Schur functions form an orthonormal basis.

The standard bases of the symmetric functions that will appear in our calculations are the monomial $\{m_\lambda\}_{\lambda}$, complete $\{h_{\lambda}\}_{\lambda}$, elementary $\{e_{\lambda}\}_{\lambda}$, power $\{p_{\lambda}\}_{\lambda}$ and Schur $\{s_{\lambda}\}_{\lambda}$ bases.

For a partition $\mu \vdash n$, we denote by \[ \H_\mu \coloneqq \H_\mu[X] = \H_\mu[X; q,t] = \sum_{\lambda \vdash n} \widetilde{K}_{\lambda \mu}(q,t) s_{\lambda} \] the \emph{(modified) Macdonald polynomials}, where \[ \widetilde{K}_{\lambda \mu} \coloneqq \widetilde{K}_{\lambda \mu}(q,t) = K_{\lambda \mu}(q,1/t) t^{n(\mu)} \] are the \emph{(modified) Kostka coefficients} (see \cite{Haglund-Book-2008}*{Chapter~2} for more details). 

Macdonald polynomials form a basis of the ring of symmetric functions $\Lambda$. This is a modification of the basis introduced by Macdonald \cite{Macdonald-Book-1995}.

If we identify the partition $\mu$ with its Ferrers diagram, i.e. with the collection of cells $\{(i,j)\mid 1\leq i\leq \mu_i, 1\leq j\leq \ell(\mu)\}$, then for each cell $c\in \mu$ we refer to the \emph{arm}, \emph{leg}, \emph{co-arm} and \emph{co-leg} (denoted respectively as $a_\mu(c), l_\mu(c), a_\mu(c)', l_\mu(c)'$) as the number of cells in $\mu$ that are strictly to the right, above, to the left and below $c$ in $\mu$, respectively.

Let $M \coloneqq (1-q)(1-t)$. For every partition $\mu$, we define the following constants:

\begin{align*}
	B_{\mu} & \coloneqq B_{\mu}(q,t) = \sum_{c \in \mu} q^{a_{\mu}'(c)} t^{l_{\mu}'(c)}, \\
	D_{\mu} & \coloneqq D_{\mu}(q,t) = MB_{\mu}(q,t)-1, \\
	T_{\mu} & \coloneqq T_{\mu}(q,t) = \prod_{c \in \mu} q^{a_{\mu}'(c)} t^{l_{\mu}'(c)} = q^{n(\mu')} t^{n(\mu)} = e_{\vert \mu \vert}[B_\mu], \\
	\Pi_{\mu} & \coloneqq \Pi_{\mu}(q,t) = \prod_{c \in \mu / (1,1)} (1-q^{a_{\mu}'(c)} t^{l_{\mu}'(c)}), \\
	w_{\mu} & \coloneqq w_{\mu}(q,t) = \prod_{c \in \mu} (q^{a_{\mu}(c)} - t^{l_{\mu}(c) + 1}) (t^{l_{\mu}(c)} - q^{a_{\mu}(c) + 1}).
\end{align*}

We will make extensive use of the \emph{plethystic notation} (cf. \cite{Haglund-Book-2008}*{Chapter~1}).

We need to introduce several linear operators on $\Lambda$.

\begin{definition}[\protect{\cite[3.11]{Bergeron-Garsia-ScienceFiction-1999}}]
	\label{def:nabla}
	We define the linear operator $\nabla \colon \Lambda \rightarrow \Lambda$ on the eigenbasis of Macdonald polynomials as \[ \nabla \H_\mu = T_\mu \H_\mu. \]
\end{definition}

\begin{definition}
	\label{def:pi}
	We define the linear operator $\mathbf{\Pi} \colon \Lambda \rightarrow \Lambda$ on the eigenbasis of Macdonald polynomials as \[ \mathbf{\Pi} \H_\mu = \Pi_\mu \H_\mu \] where we conventionally set $\Pi_{\varnothing} \coloneqq 1$.
\end{definition}

\begin{definition}
	\label{def:delta}
	For $f \in \Lambda$, we define the linear operators $\Delta_f, \Delta'_f \colon \Lambda \rightarrow \Lambda$ on the eigenbasis of Macdonald polynomials as \[ \Delta_f \H_\mu = f[B_\mu] \H_\mu, \qquad \qquad \Delta'_f \H_\mu = f[B_\mu-1] \H_\mu. \]
\end{definition}

Observe that on the vector space of symmetric functions homogeneous of degree $n$, denoted by $\Lambda^{(n)}$, the operator $\nabla$ equals $\Delta_{e_n}$. 

We also introduce the Theta operators, first defined in \cite{DAdderio-Iraci-VandenWyngaerd-Theta-2019}

\begin{definition}
	\label{def:theta}
	For $f \in \Lambda$, we define the linear operator $\Theta_f \colon \Lambda \rightarrow \Lambda$ as \[ \Theta_f F[X] = \mathbf{\Pi} \; f \left[ \frac{X}{M} \right] \mathbf{\Pi}^{-1} F[X]. \]
\end{definition}

It is clear that $\Theta_f$ is linear, and moreover, if $f$ is homogenous of degree $k$, then so is $\Theta_f$, i.e. \[\Theta_f \Lambda^{(n)} \subseteq \Lambda^{(n+k)} \qquad \text{ for } f \in \Lambda^{(k)}. \]

%
%

It is convenient to introduce the so called $q$-notation. In general, a $q$-analogue of an expression is a generalisation involving a parameter $q$ that reduces to the original one for $q \rightarrow 1$.

\begin{definition}
	For a natural number $n \in \mathbb{N}$, we define its $q$-analogue as \[ [n]_q \coloneqq \frac{1-q^n}{1-q} = 1 + q + q^2 + \dots + q^{n-1}. \]
\end{definition}

Given this definition, one can define the $q$-factorial and the $q$-binomial as follows.

\begin{definition}
	We define \[ [n]_q! \coloneqq \prod_{k=1}^{n} [k]_q \quad \text{and} \quad \qbinom{n}{k}_q \coloneqq \frac{[n]_q!}{[k]_q![n-k]_q!} \]
\end{definition}

\begin{definition}
	For $x$ any variable and $n \in \N \cup \{ \infty \}$, we define the \emph{$q$-Pochhammer symbol} as \[ (x;q)_n \coloneqq \prod_{k=0}^{n-1} (1-xq^k) = (1-x) (1-xq) (1-xq^2) \cdots (1-xq^{n-1}). \]
\end{definition}

We can now introduce yet another family of symmetric functions.

\begin{definition}
	\label{def:Enk}
	For $0 \leq k \leq n$, we define the symmetric function $E_{n,k}$ by the expansion \[ e_n \left[ X \frac{1-z}{1-q} \right] = \sum_{k=0}^n \frac{(z;q)_k}{(q;q)_k} E_{n,k}. \]
\end{definition}

Notice that $E_{n,0} = \delta_{n,0}$. Setting $z=q^j$ we get \[ e_n \left[ X \frac{1-q^j}{1-q} \right] = \sum_{k=0}^n \frac{(q^j;q)_k}{(q;q)_k} E_{n,k} = \sum_{k=0}^n \qbinom{k+j-1}{k}_q E_{n,k} \] and in particular, for $j=1$, we get \[ e_n = E_{n,0} + E_{n,1} + E_{n,2} + \cdots + E_{n,n}, \] so these symmetric functions split $e_n$, in some sense.

We care in particular about the following identity.

\begin{proposition}[\protect{\cite[Theorem~4]{Can-Loehr-2006}}]
	\label{prop:pn_Enk}
	\[ \omega(p_n) = \sum_{k=1}^n \frac{[n]_q}{[k]_q} E_{n,k} \]
\end{proposition}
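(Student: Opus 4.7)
The plan is to differentiate the defining identity
\[ e_n\!\left[X\frac{1-z}{1-q}\right] = \sum_{k=0}^n \frac{(z;q)_k}{(q;q)_k} E_{n,k} \]
with respect to $z$ and then evaluate at $z=1$. On the right-hand side, since $(z;q)_k = (1-z)(1-zq)\cdots(1-zq^{k-1})$, when we evaluate $\frac{d}{dz}(z;q)_k$ at $z=1$ every summand in the product rule contains a factor of the form $(1-q^j)$ for some $j\geq 1$ unless we differentiated the factor $(1-z)$ itself. Hence for $k\geq 1$
\[ \frac{d}{dz}\frac{(z;q)_k}{(q;q)_k}\bigg|_{z=1} = -\frac{(q;q)_{k-1}}{(q;q)_k} = -\frac{1}{1-q^k}, \]
while for $k=0$ the derivative vanishes. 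So the right-hand side contributes $-\sum_{k=1}^n \frac{E_{n,k}}{1-q^k}$.

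For the left-hand side, set $Y = X(1-z)/(1-q)$ and use that $p_k$ is an eigenvector of the plethystic substitution, giving $p_k[Y] = p_k[X]\frac{1-z^k}{1-q^k}$. In particular $p_k[Y]$ vanishes at $z=1$ for every $k\geq 1$, so evaluating any symmetric function at $Y$ with $z=1$ only sees the constant term in its power-sum expansion. Writing $e_n$ in the power sum basis, the only monomial that is linear in a single $p_k$ is $\frac{(-1)^{n-1}}{n}p_n$ (coming from $\lambda = (n)$). By the chain rule, the derivative at $z=1$ picks out precisely this term, and since $\frac{d}{dz}p_n[Y]\big|_{z=1} = -\frac{n\,p_n[X]}{1-q^n}$, we obtain
\[ \frac{d}{dz}e_n\!\left[X\frac{1-z}{1-q}\right]\bigg|_{z=1} = \frac{(-1)^{n-1}}{n}\cdot\left(-\frac{n\,p_n[X]}{1-q^n}\right) = -\frac{\omega(p_n)}{1-q^n}, \]
using $\omega(p_n) = (-1)^{n-1}p_n$.

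Equating both sides and multiplying through by $-(1-q^n)$ yields
\[ \omega(p_n) = \sum_{k=1}^n \frac{1-q^n}{1-q^k}\,E_{n,k} = \sum_{k=1}^n \frac{[n]_q}{[k]_q}\,E_{n,k}, \]
which is the claim. The only delicate point is the plethystic chain rule argument, namely the observation that at $z=1$ all the $p_k[Y]$ vanish simultaneously, which collapses the derivative of $e_n[Y]$ to the single contribution coming from the $p_n$-term in its power-sum expansion; once this is granted, the rest is a direct calculation.
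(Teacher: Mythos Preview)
Your proof is correct. The paper does not actually prove this proposition; it is simply quoted with a citation to Can--Loehr, so there is no ``paper's own proof'' to compare against. Your argument---differentiating the defining generating function for the $E_{n,k}$ with respect to $z$ and evaluating at $z=1$, using that all $p_k[Y]$ vanish there so only the $p_n$-term of the power-sum expansion of $e_n$ survives---is clean and self-contained. The computation of the derivative on both sides is accurate, and the final identification $\omega(p_n)=(-1)^{n-1}p_n$ closes the argument.
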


The Theta operators will be useful to restate the Delta conjectures in a new fashion, thanks to the following results.

\begin{theorem}[\protect{\cite[Theorem~3.1]{DAdderio-Iraci-VandenWyngaerd-Theta-2019}}]
	\label{thm:theta-en}
	\[ \Theta_{e_k} \nabla e_{n-k} = \Delta'_{e_{n-k-1}} e_n \]
\end{theorem}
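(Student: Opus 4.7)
The strategy is to expand both sides in the modified Macdonald basis and match the coefficient of $\H_\mu$ for each $\mu \vdash n$. Using the standard Cauchy-like expansion $e_N = \sum_{\mu \vdash N} \frac{MB_\mu \Pi_\mu}{w_\mu}\, \H_\mu[X]$, together with the fact that $\Delta'_{e_{n-k-1}}$ acts diagonally by $e_{n-k-1}[B_\mu-1]$, the right-hand side becomes
\[ \Delta'_{e_{n-k-1}} e_n \;=\; \sum_{\mu \vdash n} \frac{MB_\mu \Pi_\mu\, e_{n-k-1}[B_\mu-1]}{w_\mu}\, \H_\mu[X]. \]
For the left-hand side, $\nabla e_{n-k} = \sum_{\nu \vdash n-k} \frac{MB_\nu \Pi_\nu T_\nu}{w_\nu}\, \H_\nu[X]$, and by Definition~\ref{def:theta} one has $\Theta_{e_k} \H_\nu = \Pi_\nu^{-1} \mathbf{\Pi}\bigl(e_k[X/M]\, \H_\nu[X]\bigr)$.

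The crucial step is then to expand $e_k[X/M]\, \H_\nu[X]$ in the modified Macdonald basis. This is a Macdonald Pieri-type identity of the form
\[ e_k[X/M]\, \H_\nu[X] \;=\; \sum_{\substack{\mu \supseteq \nu \\ |\mu/\nu|=k}} c_{\mu,\nu}(q,t)\, \H_\mu[X], \]
with explicit coefficients $c_{\mu,\nu}$ given as products of $q,t$-factors along the skew shape $\mu/\nu$. Applying $\mathbf{\Pi}$ multiplies each summand by $\Pi_\mu$, so collecting the coefficient of $\H_\mu$ on the left-hand side reduces the theorem to the purely symmetric-function identity
\[ \sum_{\substack{\nu \subseteq \mu \\ |\mu/\nu|=k}} \frac{B_\nu T_\nu}{w_\nu}\, c_{\mu,\nu}(q,t) \;=\; \frac{B_\mu\, e_{n-k-1}[B_\mu-1]}{w_\mu}. \]

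Verifying this last identity is the main obstacle. The base case $k=0$ collapses to the classical $\Delta'_{e_{n-1}} e_n = \nabla e_n$, equivalent to $e_{n-1}[B_\mu-1] = T_\mu$ for $\mu \vdash n$: indeed $B_\mu - 1 = \sum_{c \in \mu,\, c \neq (1,1)} q^{a_\mu'(c)} t^{l_\mu'(c)}$ is a sum of $n-1$ monomials, so $e_{n-1}$ of it equals the product of all cell-monomials of $\mu$ except the $(1,1)$ corner (which contributes the factor $1$), yielding $T_\mu$. For general $k$, I would proceed by induction, writing $e_k[X/M]$ via Newton's identity in terms of $p_j[X/M]$ and exploiting the known action of multiplication by $p_j[X/M]$ on $\H_\nu$, controlling telescoping cancellations through the Pieri coefficients. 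Alternatively, one can follow the approach of \cite{DAdderio-Iraci-VandenWyngaerd-Theta-2019}, where the identity is extracted directly from the definition of $\Theta_f$ combined with standard plethystic manipulations involving $\mathbf{\Pi}$ and $\nabla$.
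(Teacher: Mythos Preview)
The paper does not prove this statement at all: Theorem~\ref{thm:theta-en} is quoted verbatim from \cite{DAdderio-Iraci-VandenWyngaerd-Theta-2019}*{Theorem~3.1} and used as a black box, so there is no ``paper's own proof'' to compare against.

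As for your proposal on its own merits: the reduction you write down is correct. Expanding $e_N$ in the Macdonald basis as $\sum_{\mu \vdash N} \frac{MB_\mu \Pi_\mu}{w_\mu}\H_\mu$, applying $\Delta'_{e_{n-k-1}}$ on the right and unwinding $\Theta_{e_k}$ on the left, one does land on the coefficient identity
\[
\sum_{\substack{\nu \subseteq \mu \\ |\mu/\nu|=k}} \frac{B_\nu T_\nu}{w_\nu}\, c_{\mu,\nu}(q,t) \;=\; \frac{B_\mu\, e_{n-k-1}[B_\mu-1]}{w_\mu},
\]
and your verification of the case $k=0$ is fine. The problem is that this displayed identity \emph{is} the theorem: you have not proved it for $k>0$, you have only rephrased it. Saying ``I would proceed by induction using Newton's identity and the action of $p_j[X/M]$, controlling telescoping cancellations'' is not a proof; the Pieri coefficients $c_{\mu,\nu}$ are complicated products over cells of $\mu$ and $\nu$, and there is no indication of what the inductive hypothesis is, what telescopes, or why the sum collapses to $B_\mu e_{n-k-1}[B_\mu-1]/w_\mu$. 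Your fallback, ``alternatively, follow the approach of \cite{DAdderio-Iraci-VandenWyngaerd-Theta-2019}'', is circular: that is the reference being cited for the result in the first place.

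In short, the setup is sound but the argument stops exactly where the real work begins. To turn this into a proof you would need either an explicit closed-form evaluation of the Pieri sum (which is essentially what the cited paper supplies, via a different route through summation identities for Macdonald polynomials) or a genuine inductive argument with all steps spelled out.
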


\begin{theorem}[\protect{\cite[Theorem~3.3]{DAdderio-Iraci-VandenWyngaerd-Theta-2019}}]
	\label{thm:theta-pn}
	\[ \frac{[n]_q}{[n-k]_q} \Theta_{e_k} \nabla \omega(p_{n-k}) = \frac{[n-k]_t}{[n]_t} \Delta_{e_{n-k}} \omega(p_n) \]
\end{theorem}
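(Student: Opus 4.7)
The plan is to reduce both sides to a common expansion in the symmetric functions $E_{n,j}$ via Proposition~\ref{prop:pn_Enk}. Applying the proposition to $\omega(p_{n-k})$ and to $\omega(p_n)$, the left-hand side rewrites as
\[
\frac{[n]_q}{[n-k]_q} \Theta_{e_k} \nabla \sum_{j=1}^{n-k} \frac{[n-k]_q}{[j]_q} E_{n-k,j}
= \sum_{j=1}^{n-k} \frac{[n]_q}{[j]_q} \, \Theta_{e_k} \nabla E_{n-k,j},
\]
while the right-hand side becomes
\[
\frac{[n-k]_t}{[n]_t} \Delta_{e_{n-k}} \sum_{j=1}^{n} \frac{[n]_q}{[j]_q} E_{n,j}
= \sum_{j=1}^{n} \frac{[n-k]_t \, [n]_q}{[n]_t \, [j]_q} \, \Delta_{e_{n-k}} E_{n,j}.
\]
It thus suffices to establish a refined identity matching $\Theta_{e_k} \nabla E_{n-k,j}$ to an explicit combination of $\Delta_{e_{n-k}} E_{n,i}$, with coefficients forced by the rational factors above.

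To prove such a refined identity, I would work directly with the plethystic definition $e_m[X(1-z)/(1-q)] = \sum_{j} \frac{(z;q)_j}{(q;q)_j} E_{m,j}$: apply $\Theta_{e_k} \nabla$ to this series at $m = n-k$ and $\Delta_{e_{n-k}}$ to the analogous series at $m = n$, and match the coefficients of $(z;q)_j/(q;q)_j$. Since $\nabla$, $\mathbf{\Pi}$, and $\Delta_{e_{n-k}}$ all act diagonally on the Macdonald basis $\{\H_\mu\}$, the nontrivial ingredient is the action of multiplication by $e_k[X/M]$, obtained from its Pieri-type expansion $e_k[X/M] \, \H_\mu = \sum_{\lambda \supseteq \mu,\, |\lambda/\mu|=k} c_{\lambda/\mu}(q,t) \H_\lambda$. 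Sandwiching this between $\mathbf{\Pi}$ and $\mathbf{\Pi}^{-1}$ and combining with the diagonal eigenvalues $T_\lambda$ (from $\nabla$) and $e_{n-k}[B_\mu]$ (from $\Delta_{e_{n-k}}$), the required equality reduces to a rational-function identity in $B_\mu, \Pi_\mu, T_\mu$, verifiable by pairing both sides against $\H_\mu$ under the $(\ast)$-scalar product for which the modified Macdonald polynomials are orthogonal.

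The main obstacle is precisely this non-diagonal action of $\Theta_{e_k}$, which couples different shapes $\mu$ and $\lambda$ through intricate Pieri coefficients; the telescoping between the two sides is what ultimately produces the asymmetric factor $[n-k]_t/[n]_t$ paired against $[n]_q/[n-k]_q$, and controlling this cancellation is where the proof is most delicate. A possible alternative route, which would sidestep the Pieri expansion, is induction on $k$: the base case $k=0$ is tautological, and the inductive step should follow from a factorisation $\Theta_{e_k} = \Theta_{e_1} \Theta_{e_{k-1}}$ (in the relevant weak sense on these symmetric functions) combined with the $e_n$-analogue in Theorem~\ref{thm:theta-en} applied componentwise to the $E_{n-k,j}$ summands. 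Either way, once the refined identity at the level of the $E_{n,j}$'s is established, the global identity follows by summation against the rational coefficients extracted from Proposition~\ref{prop:pn_Enk}.
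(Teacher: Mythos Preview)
This theorem is not proved in the present paper at all: it is quoted verbatim from \cite{DAdderio-Iraci-VandenWyngaerd-Theta-2019}*{Theorem~3.3}, so there is no ``paper's own proof'' to compare against. That said, your proposal is not a proof but a plan, and the plan has genuine gaps.

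First, the alternative route you suggest is simply wrong: from the definition $\Theta_f = \mathbf{\Pi}\, f[X/M]\, \mathbf{\Pi}^{-1}$ one gets $\Theta_{e_1}\Theta_{e_{k-1}} = \mathbf{\Pi}\, e_1[X/M]\, e_{k-1}[X/M]\, \mathbf{\Pi}^{-1} = \Theta_{e_1 e_{k-1}}$, and $e_1 e_{k-1} \neq e_k$. So there is no factorisation $\Theta_{e_k} = \Theta_{e_1}\Theta_{e_{k-1}}$ to induct on.

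Second, in your main route you never actually state, let alone prove, the ``refined identity'' you need. After the rewriting via Proposition~\ref{prop:pn_Enk} the left side is a sum over $j \le n-k$ of $\Theta_{e_k}\nabla E_{n-k,j}$ while the right side is a sum over $j \le n$ of $\Delta_{e_{n-k}} E_{n,j}$; these are indexed by different ranges and there is no termwise matching. Saying that the comparison ``reduces to a rational-function identity in $B_\mu, \Pi_\mu, T_\mu$'' hides the whole difficulty: $\Theta_{e_k}$ is not diagonal on the $\H_\mu$, and the Pieri coefficients $c_{\lambda/\mu}$ you invoke do not telescope in any obvious way to produce the factor $[n-k]_t/[n]_t$. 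The actual proof in \cite{DAdderio-Iraci-VandenWyngaerd-Theta-2019} goes through a specific summation identity for these Macdonald--Pieri coefficients, and that is precisely the missing idea here.
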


\begin{corollary}
	\[ \frac{[n]_t}{[n-k]_t} \Theta_{e_k} \nabla \omega(p_{n-k}) = \frac{[n-k]_q}{[n]_q} \Delta_{e_{n-k}} \omega(p_n) \]
\end{corollary}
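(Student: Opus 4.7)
The corollary is a direct algebraic consequence of Theorem~\ref{thm:theta-pn}, so my plan is simply to rearrange that identity. Writing Theorem~\ref{thm:theta-pn} in the form
\[ \Theta_{e_k} \nabla \omega(p_{n-k}) = \frac{[n-k]_q [n-k]_t}{[n]_q [n]_t} \Delta_{e_{n-k}} \omega(p_n), \]
the corollary is obtained by multiplying both sides by $\frac{[n]_t}{[n-k]_t}$. Equivalently, starting from the stated form of Theorem~\ref{thm:theta-pn}, one multiplies through by $\frac{[n-k]_q [n]_t}{[n]_q [n-k]_t}$: on the left, the factor $\frac{[n]_q}{[n-k]_q}$ cancels against $\frac{[n-k]_q}{[n]_q}$ leaving the prefactor $\frac{[n]_t}{[n-k]_t}$, while on the right the factor $\frac{[n-k]_t}{[n]_t}$ cancels against $\frac{[n]_t}{[n-k]_t}$ leaving the prefactor $\frac{[n-k]_q}{[n]_q}$.

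There is no obstacle: all the $q$-integers $[n]_q, [n-k]_q, [n]_t, [n-k]_t$ are nonzero elements of $\mathbb{Q}(q,t)$ for $0 \le k < n$, so the multiplication is valid, and the case $k = n$ is trivially vacuous (both sides vanish, since $[n-k]_q = 0 = [n-k]_t$ would only multiply the zero object $\omega(p_0)$). Thus the derivation consists of a single line of arithmetic in $\mathbb{Q}(q,t)$ applied to an identity in $\Lambda$, and no further combinatorial or plethystic input is needed.
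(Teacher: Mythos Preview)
Your proof is correct and matches the paper's intent: the corollary is stated without proof immediately after Theorem~\ref{thm:theta-pn}, precisely because it is obtained from that theorem by the one-line rearrangement you describe. The only minor quibble is your handling of $k=n$: there the left-hand side of the corollary has the undefined prefactor $[n]_t/[0]_t$, so the statement is simply not meant to apply in that degenerate case rather than both sides vanishing.
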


\section{Delta conjectures}
By \emph{Delta conjectures} we refer to a family of conjectures that provide a combinatorial interpretation of certain symmetric functions that arise from the Delta operators and show positivity properties. 

The first and most famous of the Delta conjectures is known as \emph{shuffle conjecture}, now a theorem by E. Carlsson and A. Mellit (see \cite{Carlsson-Mellit-ShuffleConj-2018}).

\begin{theorem}[Shuffle theorem]
	\[ \nabla e_n = \sum_{\pi \in \LD(n)} q^{\dinv(\pi)} t^{\area(\pi)} x^\pi \]
\end{theorem}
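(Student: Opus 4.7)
The plan is to follow the compositional approach pioneered by Haglund--Morse--Zabrocki and completed by Carlsson and Mellit, rather than attacking the unrefined identity directly. First I would refine both sides by recording the composition $\alpha = (\alpha_1, \dots, \alpha_\ell) \vDash n$ of touch points of the Dyck path on the main diagonal: this partitions $\LD(n)$ into subsets $\LD(n)_\alpha$. On the symmetric-function side, one introduces the Haglund--Morse--Zabrocki operators $\mathbf{C}_a$, for which $e_n = \sum_{\alpha \vDash n} \mathbf{C}_{\alpha_1} \cdots \mathbf{C}_{\alpha_\ell}(1)$, reducing the theorem to the compositional identity
\[ \nabla \mathbf{C}_{\alpha_1} \cdots \mathbf{C}_{\alpha_\ell}(1) \;=\; \sum_{\pi \in \LD(n)_\alpha} q^{\dinv(\pi)} t^{\area(\pi)} x^\pi \]
for each fixed $\alpha$, which I would then sum over $\alpha$.

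Next I would set up the Dyck path algebra $\mathbb{A}$ with generators $d_+, d_-, T_1, T_2, \dots$ acting on a graded polynomial representation $V_\bullet = \bigoplus_k V_k$, where $V_k \cong \Lambda \otimes \mathbb{Q}(q,t)[y_1, \dots, y_k]$. The strategy is to encode the left-hand side of the compositional identity as a specific word in $d_+, d_-$ (and the Demazure-like operators $T_i$) applied to $1 \in V_0$, using standard plethystic formulas that rewrite each $\nabla \mathbf{C}_a$ in these terms. Simultaneously, I would set up a parallel combinatorial representation on labelled \emph{partial} Dyck paths (allowed to end at any height) and define the action of $d_+, d_-, T_i$ by local moves: $d_+$ prepends a north step, $d_-$ implements a signed telescoping sum over ways to remove the leftmost column, and $T_i$ permutes adjacent labels with the usual Hecke correction.

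The heart of the proof is then to verify that $d_+, d_-, T_i$ satisfy the defining relations of $\mathbb{A}$ on \emph{both} representations. On the algebraic side this is a plethystic calculation with modified Macdonald polynomials, using the eigenvalue formulas for $\nabla$, $\Delta_f$, and the plethystic substitutions $X \mapsto X + M/z$. On the combinatorial side it amounts to constructing explicit sign-reversing involutions on decorated partial Dyck paths that match the required commutation relations between $d_+$ and $d_-$. Once the relations are verified, the two representations are isomorphic on the cyclic submodule generated by $1$, and a word-by-word comparison yields the compositional identity; summing over $\alpha$ recovers the theorem.

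The main obstacle will be the relation $[d_-, d_+] = $ correction term on the combinatorial model, since this is where the interplay between $\dinv$ and $\area$ becomes delicate: one must track exactly how inserting/removing a column of the Dyck path redistributes diagonal inversions with the existing labels, and show that the signed contributions cancel in matched pairs except for a clean residual term matching the algebraic commutator. Carlsson and Mellit's key insight is that a suitable telescoping definition of $d_-$ makes these cancellations transparent, at the cost of working with partial (non-Dyck) paths throughout the induction.
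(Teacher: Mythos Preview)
The paper does not actually prove this statement: it is stated as a known result and attributed to Carlsson and Mellit \cite{Carlsson-Mellit-ShuffleConj-2018}, with no argument given in the paper itself. So there is no ``paper's own proof'' to compare against.

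That said, your outline is a faithful high-level sketch of the Carlsson--Mellit approach that the paper cites: refine to the compositional shuffle conjecture via the Haglund--Morse--Zabrocki operators $\mathbf{C}_\alpha$, build the Dyck path algebra acting on $V_\bullet$, realise the same algebra combinatorially on partial labelled paths, and deduce the compositional identity from the matching of the two representations. As a plan it is sound; the genuine work, as you note, is in verifying the commutation relations (especially the one involving $d_-$ and $d_+$) on the combinatorial side, and your description there is more impressionistic than the actual argument, which passes through an auxiliary operator $d_-^\ast$ and a careful ``characteristic function'' computation rather than a single sign-reversing involution. If you intend this as a proof rather than a roadmap, that step would need to be fleshed out substantially.
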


%

The shuffle theorem is especially important because $\nabla e_n$ has another interpretation, as the bigraded Frobenius characteristic of the $S_n$ module of the \emph{diagonal harmonics}. This is one of the facts that first motivated the study of Macdonald polynomials, and it has been proved by M. Haiman in \cite{Haiman-Vanishing-2002}. See also \cite{Haiman-nfactorial-2001} for further details.

The \emph{Delta conjecture} is a generalisation of the shuffle conjecture, introduced by J. Haglund, J. Remmel, and A. Wilson in \cite{Haglund-Remmel-Wilson-2018}. In the same paper, the authors suggest that an even more general conjecture should hold, which we call \emph{generalised Delta conjecture}. It reads as follows.

\begin{conjecture}[(Generalised) Delta conjecture, valley version]
	\label{conj:valley-delta}
	\[ \Delta_{h_m} \Delta'_{e_{n-k-1}} e_n = \sum_{\pi \in \LD(m,n)^{\bullet k}} q^{\dinv(\pi)} t^{\area(\pi)} x^\pi. \]
\end{conjecture}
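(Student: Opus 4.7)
The statement is in fact the open \emph{(generalised) valley Delta conjecture}; what is currently known are the unrefined shuffle theorem ($m=k=0$, Carlsson--Mellit) and the $q=0$ specialisation (Qiu--Wilson). So what follows is necessarily a proof \emph{strategy}, built to parallel the way the rise version was approached in the literature.

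My first move would be to rewrite the left-hand side via Theorem~\ref{thm:theta-en} as $\Delta_{h_m} \Theta_{e_k} \nabla e_{n-k}$. This is the form the authors intend to exploit, and it has two advantages: the parameter $k$ now appears as the degree of the $\Theta$ operator rather than as an internal index of $\Delta'$, and the $k=0$ case degenerates cleanly into $\Delta_{h_m}\nabla e_n$, a symmetric function to which shuffle-theorem-style techniques apply. I would then expand $\nabla e_{n-k}$ using the symmetric functions $E_{n-k,j}$ of Definition~\ref{def:Enk}, which combinatorially corresponds to refining $\LD(m,n)^{\bullet k}$ by the number of touching points on the main diagonal. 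This yields a \emph{touching} version of the conjecture indexed by $(m,n,k,j)$, which is typically easier to prove inductively because the smallest touching piece can be split off.

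Next, I would aim to set up matching recursions on both sides. Combinatorially, a path in $\LD(m,n)^{\bullet k}$ decomposes by cutting off its first return to the diagonal, producing a shorter Dyck path and a labelled, possibly decorated piece that can be analysed via a schedule formula of the Haglund--Sergel type (which the paper itself develops later, and which keeps track of how $\dinv$, $\area$, and $x^\pi$ change under the decomposition). On the symmetric function side, $\Delta_{h_m}$ acts diagonally on Macdonald polynomials, and $\Theta_{e_k}$ expands via the plethystic formula of Definition~\ref{def:theta} as $\mathbf{\Pi}\, e_k[X/M]\, \mathbf{\Pi}^{-1}$; combined with standard $E_{n,k}$-recursions, this should give a recursion matching the one coming from the touching decomposition. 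The induction would be on $n$ (or on $n+k$), with the base case supplied by the shuffle theorem and the dimension-$0$ path.

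The hard part is precisely the step that has obstructed every prior attempt: controlling the interaction between decorated contractible valleys and the $\dinv$ statistic through the plethystic substitution $X \mapsto X/M$ that appears in $\Theta_{e_k}$. For $k=0$ there are no valleys and the issue disappears; for $q=0$ only minimal-$\dinv$ paths survive and the combinatorics becomes rigid enough to match directly; for generic $(q,t)$ one would need to show that inserting a decorated valley in a row of area $a$ contributes exactly the $q$-weight dictated by $e_k[X/M]$ acting on the current partial sum, and this equality seems to require more than the schedule formula alone. A pragmatic route would be to extract coefficients of Gessel fundamental quasisymmetric functions on both sides, reducing the identity to a family of $(q,t)$-statements about standardised objects; these could then potentially be checked by expanding in the $\widetilde{H}_\mu$-basis using the explicit action of $\Theta_{e_k}$ on Macdonald polynomials. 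Lacking such a bijective or plethystic breakthrough, the best one can currently do is what the present paper does: assume Conjecture~\ref{conj:valley-delta} and deduce its square analogue from it.
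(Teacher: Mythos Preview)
Your assessment is correct: the paper does not prove this statement at all. It is stated as Conjecture~\ref{conj:valley-delta} and remains open; the paper's contribution is to \emph{assume} its touching refinement (Conjecture~\ref{conj:gen-valley-delta-touching}) and deduce the square analogue from it. So there is no ``paper's own proof'' to compare against, and your final sentence captures exactly what the paper does.

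Your sketched strategy is reasonable as a roadmap and is in the spirit of how the rise version has been attacked, but you are right to flag the core obstruction: matching the effect of $\Theta_{e_k}$ (equivalently the plethystic substitution $X\mapsto X/M$) with the $\dinv$-contribution of inserting decorated contractible valleys is precisely the step no one has carried out for general $(q,t)$. Nothing in the paper claims to resolve this, and nothing in your outline does either, so the honest status is exactly as you describe: a conjecture with known special cases ($m=k=0$ via Carlsson--Mellit; $q=0$ via Qiu--Wilson) and a plausible but incomplete inductive scheme for the general case.
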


For $m=0$ this conjecture first appears together with the rise version in \cite{Haglund-Remmel-Wilson-2018}. The full statement, together with a proof of the case $q=0$, has been given by D. Qiu and A. Wilson in \cite{Qiu-Wilson-2019}.

\begin{conjecture}[(Generalised) Delta conjecture, rise version]
	\[ \Delta_{h_m} \Delta'_{e_{n-k-1}} e_n = \sum_{\pi \in \LD(m,n)^{\ast k}} q^{\dinv(\pi)} t^{\area(\pi)} x^\pi. \]
\end{conjecture}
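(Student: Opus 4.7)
My plan is to reduce the rise version to the schedule formula strategy developed in the paper, using the $\Theta_f$ reformulation of the Delta conjectures, and then to match the combinatorial side with a symmetric function expansion term by term.

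First I would apply Theorem~\ref{thm:theta-en} to rewrite the right-hand symmetric function as $\Delta_{h_m}\Theta_{e_k}\nabla e_{n-k}$. This has the advantage that the three parameters $m$, $k$, $n-k$ each correspond to a clean algebraic operation ($\Delta_{h_m}$, $\Theta_{e_k}$, $\nabla$) that should mirror, respectively: the $m$ zero labels, the $k$ decorated rises, and the $n-k$ positive labels carried by the undecorated vertical steps.

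Next I would introduce a \emph{touching} refinement of the conjecture, writing \[ \Delta_{h_m} \Theta_{e_k} \nabla e_{n-k} = \sum_{r \geq 1} \Delta_{h_m} \Theta_{e_k} \nabla (E_{n-k,r}) \] via Proposition~\ref{prop:pn_Enk}-type expansions of $e_{n-k}$ in the $E_{n-k,r}$ basis, and matching $\nabla(E_{n-k,r})$ to the generating function of paths whose base portion touches the main diagonal exactly $r$ times. On the combinatorial side, the elementary observation is that every decorated rise $i \in dr$ sits above some vertical step with the same label-order constraints, so a rise-decorated Dyck path can be uniquely recovered from its ``compressed'' undecorated shadow together with insertion data. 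This is the step where the Haglund--Sergel schedule formula is useful: I would extend the schedule formula of the present paper (which handles valley decorations and repeated labels) to rise decorations by showing that the dinv statistic factors nicely over the area word.

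Once both sides are in schedule form, the proof reduces to an identity of rational functions in $q,t$, one for each area word. I would verify this identity by evaluating $\Delta_{h_m}\Theta_{e_k}\nabla E_{n-k,r}$ using standard plethystic manipulations and the Macdonald eigenoperator identities for $\Delta_{h_m}$ and $\Theta_{e_k}$, and then comparing to the schedule sum.

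The hard part, and the reason this has remained a conjecture, is the behaviour of $\dinv$ under decorating rises: unlike valley decorations, which remove an inversion that is ``witnessed'' at a specific position and therefore admit a Theta-operator interpretation analogous to the valley version, a decorated rise changes $\area$ but is only indirectly linked to $\dinv$. Consequently, any schedule-style matching has to track residual dinv contributions from rows adjacent to the decorated ones, and this is where the clean factorisation over area words seems to break down. A viable route around this is to first prove a $q=0$ (or $t=0$) specialisation via the explicit combinatorics of parking functions, and then use the rational-function symmetry in $q,t$ together with a degree bound argument in $t$ (or $q$) to lift the identity; alternatively, one could try to mimic the Carlsson--Mellit Dyck path algebra with new generators corresponding to rise decorations, which would be the deepest but most uniform approach.
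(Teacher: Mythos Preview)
The statement you are addressing is a \emph{conjecture} in the paper, not a theorem: the paper gives no proof of the rise version of the generalised Delta conjecture, and indeed it is still open. So there is nothing to compare your proposal against.

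More to the point, what you have written is not a proof but a research outline, and you yourself identify the obstruction that prevents it from going through. The reduction to $\Delta_{h_m}\Theta_{e_k}\nabla e_{n-k}$ and the touching refinement via the $E_{n-k,r}$ are correct and standard. But the step where you would ``extend the schedule formula of the present paper to rise decorations by showing that the dinv statistic factors nicely over the area word'' is precisely the missing piece: no such schedule formula is known for rise-decorated paths, because decorated rises modify $\area$ rather than $\dinv$, so the $q$-enumeration does not factor along the diagonal word in the way the valley version does. Your final paragraph essentially concedes this. The alternative routes you list (specialisation at $q=0$ or $t=0$ plus a lifting argument, or a Carlsson--Mellit style algebra with rise generators) are legitimate directions, but neither is carried out, and neither is known to succeed; in particular, $q,t$-symmetry plus a one-variable specialisation does not determine a two-variable polynomial without further input. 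As written, the proposal does not establish the conjecture.
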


The rise version of the Delta conjecture is simply the case $m=0$ of the general case.


Recalling that $\nabla \rvert_{\Lambda^{(n)}} = \Delta'_{e_{n-1}} \rvert_{\Lambda^{(n)}}$, it is clear that for $k=0$ both the versions of the Delta conjecture reduce to the shuffle theorem.

%
%

The \emph{square conjecture} was first suggested by N. Loehr and G. Warrington in \cite{Loehr-Warrington-square-2007}, and it was then proved by E. Sergel in \cite{Leven-2016} using the shuffle theorem.

\begin{theorem}[Square Theorem]
	\[ \nabla \omega(p_n) = \sum_{\pi \in \LSQ(n)} q^{\dinv(\pi)} t^{\area(\pi)} x^\pi. \]
\end{theorem}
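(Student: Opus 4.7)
The plan is to derive the Square Theorem from the Shuffle Theorem of Carlsson--Mellit, following Sergel's strategy in \cite{Leven-2016}. First I would invoke Proposition~\ref{prop:pn_Enk} to expand
\[ \nabla \omega(p_n) \;=\; \sum_{k=1}^n \frac{[n]_q}{[k]_q}\, \nabla E_{n,k}, \]
and then apply the \emph{touching refinement} of the Shuffle Theorem, which asserts that $\nabla E_{n,k}$ is the $(q,t)$-generating function of labelled Dyck paths in $\LD(n)$ whose underlying path has exactly $k$ vertical steps starting on the main diagonal. This refinement is a standard consequence of the Shuffle Theorem, obtained by grouping labelled Dyck paths according to their number of returns to the main diagonal.

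The combinatorial core is then a cyclic rotation bijection. Every labelled square path $\pi \in \LSQ(n)$ of shift $s$ has a nonempty set of vertical steps starting on its base diagonal $y = x - s$. Cutting $\pi$ at such a step and swapping the two resulting pieces yields a labelled Dyck path of size $n$, and this operation is reversible up to the choice of cut point. One then shows that for each labelled Dyck path $\pi' \in \LD(n)$ with exactly $k$ vertical steps on the main diagonal, the orbit sum of $q^{\dinv}$ over the $n$ cyclic rotations (each producing a labelled square path) equals $\frac{[n]_q}{[k]_q}\, q^{\dinv(\pi')}$. Crucially, the area statistic and the monomial $x^{\pi'}$ are manifestly invariant under the rotation, while the $\dinv$ changes in a controlled way. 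Combining with the touching refinement yields the identity
\[ \frac{[n]_q}{[k]_q}\, \nabla E_{n,k} \;=\; \sum_{\substack{\pi \in \LSQ(n) \\ k\text{ base-diagonal steps}}} q^{\dinv(\pi)}\, t^{\area(\pi)}\, x^\pi, \]
and summing over $k$ gives the desired formula.

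The main obstacle is the careful bookkeeping of the $\dinv$ statistic under cyclic rotation. The area is trivially preserved, but $\dinv$ decomposes into primary, secondary, and bonus inversions whose contributions rearrange when vertical steps are moved between the ``bottom'' and ``top'' of the path. The heart of the argument is to show that on a fixed cyclic orbit of size $n$ corresponding to a Dyck path with $k$ main-diagonal touches, the values of $\dinv$ are distributed so that their $q$-weighted sum equals $\frac{[n]_q}{[k]_q}$ times the Dyck-path $\dinv$. This is precisely the rational factor produced by the expansion of $\omega(p_n)$ in the $E_{n,k}$ basis, so once this combinatorial identity is established, the Square Theorem follows by combining it with the Shuffle Theorem.
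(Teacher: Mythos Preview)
The paper does not itself prove this theorem; it is quoted as Sergel's result \cite{Leven-2016}. That said, the paper's machinery in Section~6 (Theorem~\ref{thm:shift-by-1} and Corollary~\ref{cor:square-to-dyck}) specialises at $m=k=0$ to an independent proof, so that is the natural comparison point.

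Your high-level plan is correct and matches both Sergel's argument and the paper's specialisation: expand $\omega(p_n)$ via Proposition~\ref{prop:pn_Enk}, invoke the touching refinement $\nabla E_{n,r}=\LD_{q,t;x}(n\backslash r)$ of the Shuffle Theorem, and then establish that the square-path enumerator with $r$ steps on the base diagonal equals $\frac{[n]_q}{[r]_q}$ times the corresponding Dyck-path enumerator. However, your description of the mechanism for that last identity is not right. There is no ``orbit of $n$ cyclic rotations'': a labelled Dyck path carries no natural cyclic action of order $n$ whose orbit consists of labelled square paths with the claimed $\dinv$ distribution, and the cut-and-swap map you describe does not organise square paths into orbits of size $n$. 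What Sergel actually does, and what the paper generalises in Theorem~\ref{thm:shift-by-1}, is to group square paths by their \emph{diagonal word} $z$ (recording the multiset of labels in each diagonal relative to the base). On such a class the area $=\maj(z)$ and the monomial $x^z$ are constant, and one proves a shift-by-one recursion comparing the $q^{\dinv}$-sum at shift $s$ to that at shift $s-1$; summing over the possible shifts $0\le s\le \ell$ then telescopes to the factor $[n]_q/[r]_q$ with $r=\#\rho_0$. So the varying parameter is the shift (taking $\ell+1$ values, not $n$), and the argument is a telescoping product rather than an orbit sum. One further minor point: the touching refinement is not obtained by ``grouping'' the Shuffle Theorem by touch number --- knowing $\nabla e_n=\sum_r \nabla E_{n,r}$ equals the sum over $r$ of the $r$-touch enumerators does not identify the summands term by term. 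It follows instead from the compositional Shuffle Theorem of Carlsson--Mellit, summed over compositions of fixed length $r$.
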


Unfortunately, adding zero labels and decorated rises to square paths in the trivial way and $q,t$-counting the resulting objects with respect to the bistatistic $(\dinv, \area)$ gives a polynomial that does not match the expected symmetric function. This issue has been addressed by M. D'Adderio and the authors, who stated the \emph{generalised Delta square conjecture} in \cite{DAdderio-Iraci-VandenWyngaerd-DeltaSquare-2019}.

\begin{conjecture}[(Generalised) Delta square conjecture, rise version]
	\[ \frac{[n-k]_t}{[n]_t} \Delta_{h_m} \Delta_{e_{n-k}} \omega(p_n) = \sum_{\pi \in \LSQ(m,n)^{\ast k}} q^{\dinv(\pi)} t^{\area(\pi)} x^\pi. \]
\end{conjecture}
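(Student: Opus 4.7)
My plan is to transfer, to the rise setting, the strategy that this paper applies to the valley setting. First, apply the corollary to Theorem~\ref{thm:theta-pn} to recast the symmetric function side in $\Theta$-operator form:
\[
\frac{[n-k]_t}{[n]_t}\Delta_{h_m}\Delta_{e_{n-k}}\omega(p_n) \;=\; \frac{[n]_q}{[n-k]_q}\,\Delta_{h_m}\,\Theta_{e_k}\,\nabla\omega(p_{n-k}).
\]
Since $\Delta_{h_m}$ corresponds combinatorially to the insertion of $m$ zero labels (as in \cite{Qiu-Wilson-2019}), it suffices to prove the case $m=0$ and then propagate. The target identity becomes
\[
\Theta_{e_k}\,\nabla\omega(p_{n-k}) \;=\; \frac{[n-k]_q}{[n]_q}\sum_{\pi\in\LSQ(n)^{\ast k}} q^{\dinv(\pi)}\,t^{\area(\pi)}\,x^\pi.
\]

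Second, I would adapt the schedule formula of \cite{Haglund-Sergel-2019} to rise-decorated square paths, mirroring the adaptation that this paper carries out for valleys. The key tool is the cyclic-shift operation: starting from a square path of shift $s$, one cyclically permutes the area word and labelling until the base diagonal coincides with the main diagonal, producing a labelled Dyck path. Rotating through the $n-k$ vertical steps that are not decorated rises and weighting each rotation by its contribution to $q^{\dinv}t^{\area}$ should yield a $q$-geometric factor equal to $[n-k]_q/[n]_q$ (note the $q$--$t$ swap with respect to the valley version). Combined with a touching refinement that tracks the number of vertical steps lying on the base diagonal, this produces a schedule formula for $\LSQ(n)^{\ast k}$ that factors through the analogous formula for $\LD(n-k)^{\ast k}$.

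Third, I would invoke the rise version of the (generalised) Delta conjecture on the Dyck path side, interpreted via its touching refinement and the $\Theta$-operator reformulation of Theorem~\ref{thm:theta-en}. Combining this with the cyclic factorisation above yields the required identity on the Dyck side, which upon multiplication by $[n]_q/[n-k]_q$ delivers the original statement.

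The main obstacle, I expect, is the adaptation of the schedule formula itself. Decorated rises sit inside columns with no horizontal-step buffer between consecutive vertical steps, so the cyclic shift must be executed in a way that keeps each decorated rise adjacent to the vertical step below it and preserves the area contribution (which explicitly excludes decorated-rise rows). A careful analysis of the rows near the ``cut point'' of the rotation, together with a verification that the corresponding weighted sum telescopes to $[n-k]_q/[n]_q$, is the technical crux. Once this factorisation lemma is in place, the remainder parallels Sergel's reduction of the square theorem to the shuffle theorem, now relativised through $\Theta_{e_k}$ and with the roles of $q$ and $t$ exchanged.
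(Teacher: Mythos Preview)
The statement you are addressing is a \emph{conjecture}: the paper does not prove it, but merely records it (citing \cite{DAdderio-Iraci-VandenWyngaerd-DeltaSquare-2019}) before developing the parallel \emph{valley} story. So there is no proof in the paper against which to compare.

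More importantly, your plan has a genuine gap at exactly the point you label the ``technical crux''. The adaptation of the schedule formula to rise-decorated square paths is not a routine variant of the valley argument; it is precisely the obstruction the authors flag as unresolved in Section~\ref{sec:concluding}: ``There is no interpretation of the symmetric function $\Delta_{h_m}\Theta_{e_k}\nabla\omega(p_{n-k})$ in terms of rise-decorated square paths, for which also the schedule formula is lacking.'' The valley factorisation (Theorem~\ref{thm:factorisation} and Theorem~\ref{thm:shift-by-1}) works because valley decorations interact with $\dinv$ --- the $q$-statistic --- while leaving $\area$ alone, so comparing paths with adjacent shifts produces a clean $q$-ratio. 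Rise decorations do the opposite: by Definition~\ref{def:dinv} the dinv of a rise-decorated path ignores the decorations entirely, while by Definition~\ref{def:area} the area omits the decorated rows. Your cyclic-shift heuristic that ``rotating through the $n-k$ vertical steps that are not decorated rises'' yields the factor $[n-k]_q/[n]_q$ therefore has no mechanism behind it: the dinv under shift does not see the rise decorations, so nothing in the $q$-bookkeeping distinguishes $n-k$ from $n$. Any correction would have to come from the $t$-side (area), which does not telescope into a $q$-ratio.

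Finally, even if such a factorisation lemma existed, your argument would still be conditional on the rise version of the generalised Delta conjecture and its touching refinement --- both open. At best you would obtain a rise analogue of the paper's conditional implication (valley Delta $\Rightarrow$ valley square), not an unconditional proof of the conjecture as stated.
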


where the rise version of the square conjecture is simply the case $m=0$ of the general case.

The square conjectures used to lack a valley version. Computational evidence suggests the following, checked by computer up to $n=6$.

\begin{conjecture}[(Generalised) Delta square conjecture, valley version]
	\label{conj:gen-valley-square}
	\[ \frac{[n-k]_q}{[n]_q} \Delta_{h_m} \Delta_{e_{n-k}} \omega(p_n) = \sum_{\pi \in \LSQ(m,n)^{\bullet k}} q^{\dinv(\pi)} t^{\area(\pi)} x^\pi. \]
\end{conjecture}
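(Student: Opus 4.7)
The strategy is to derive this conjecture from the (generalised) valley version of the Delta conjecture, Conjecture~\ref{conj:valley-delta}, via the $\Theta_f$ reformulation of its touching refinement together with the schedule formula developed earlier in the paper. This parallels Sergel's derivation of the Square Theorem from the Shuffle Theorem in \cite{Leven-2016}.

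First, I would rewrite the symmetric-function side. Applying $\Delta_{h_m}$ to both sides of the Corollary to Theorem~\ref{thm:theta-pn} gives
\[ \frac{[n-k]_q}{[n]_q} \Delta_{h_m} \Delta_{e_{n-k}} \omega(p_n) = \frac{[n]_t}{[n-k]_t} \Delta_{h_m} \Theta_{e_k} \nabla \omega(p_{n-k}), \]
so it suffices to give a combinatorial interpretation of $\Delta_{h_m} \Theta_{e_k} \nabla \omega(p_{n-k})$ and account for the overall factor $[n]_t/[n-k]_t$. Expanding $\omega(p_{n-k})$ by Proposition~\ref{prop:pn_Enk} then yields
\[ \Delta_{h_m} \Theta_{e_k} \nabla \omega(p_{n-k}) = \sum_{j=1}^{n-k} \frac{[n-k]_q}{[j]_q} \Delta_{h_m} \Theta_{e_k} \nabla E_{n-k,j}. \]
Since the $\Theta_{e_k}$-formulation of the touching refinement of the Delta conjecture is the framework used throughout the paper, assuming Conjecture~\ref{conj:valley-delta} each summand $\Delta_{h_m} \Theta_{e_k} \nabla E_{n-k,j}$ equals the $(q,t)$-generating function of paths in $\LD(m,n)^{\bullet k}$ whose underlying Dyck path has exactly $j$ main-diagonal touches.

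Second, I would invoke the new schedule formula from the paper to carry out the combinatorial identity. It expresses the $q$-dinv-weighted $x$-generating function of paths with a prescribed area word, label skeleton, and decoration pattern as an explicit product of $q$-integers and monomials. In particular, it exposes an $[j]_q$ factor coming from the $j$ diagonal touches, which cancels the $[j]_q$ in the denominator from Proposition~\ref{prop:pn_Enk}. Summing the rewritten Dyck-path schedule expressions over $j$, the remaining factorisation reassembles into the schedule expression for valley-decorated partially labelled square paths with a distinguished non-decorated positive base-diagonal step, yielding
\[ \Delta_{h_m} \Theta_{e_k} \nabla \omega(p_{n-k}) = \sum_{\pi \in \LSQ'(m,n)^{\bullet k}} q^{\dinv(\pi)} t^{\area(\pi)} x^\pi. \]
This is the second, ``nicer'' formulation mentioned in the introduction, and the role of $\LSQ'$ is precisely to absorb the distinguished-step structure left over from the touching refinement.

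Third, I would complete the proof by a cyclic-rotation-type bijection, adapted from \cite{Can-Loehr-2006,Leven-2016} to partially labelled valley-decorated paths. Cyclically rotating the area word of an $\LSQ'$-path at its distinguished base-diagonal position sweeps out a family of square paths of various shifts, and the $t$-weighted enumeration over such orbits produces exactly the prefactor $[n]_t/[n-k]_t$, so that
\[ \frac{[n]_t}{[n-k]_t} \sum_{\pi \in \LSQ'(m,n)^{\bullet k}} q^{\dinv(\pi)} t^{\area(\pi)} x^\pi = \sum_{\pi \in \LSQ(m,n)^{\bullet k}} q^{\dinv(\pi)} t^{\area(\pi)} x^\pi, \]
which combined with the previous identities gives the conjecture. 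The hardest step will be the schedule-formula manipulation of step two: combining the $[j]_q$ cancellation, the $m$ zero labels introduced by $\Delta_{h_m}$, and the $k$ decorated valleys introduced by $\Theta_{e_k}$, then recognising that the $j$-indexed sum reassembles into the schedule expression for $\LSQ'(m,n)^{\bullet k}$. This is the valley analogue of the technical core of \cite{Haglund-Sergel-2019}, and it is precisely the step the paper's new partial-label-sensitive schedule formula is designed to handle.
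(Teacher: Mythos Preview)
The statement you are attempting to prove is Conjecture~\ref{conj:gen-valley-square}, and the paper does \emph{not} prove it, even conditionally. What the paper actually establishes (conditionally on Conjecture~\ref{conj:gen-valley-delta-touching}) is the \emph{modified} version, Conjecture~\ref{conj:gen-valley-square-2}, whose right-hand side is the sum over $\LSQ'(m,n)^{\bullet k}$ and whose left-hand side is $\Delta_{h_m}\Theta_{e_k}\nabla\omega(p_{n-k})$ with no $t$-ratio in front. Your steps 1--4 reproduce exactly that argument: the rewriting via the Corollary to Theorem~\ref{thm:theta-pn}, the expansion through Proposition~\ref{prop:pn_Enk}, the input from the touching refinement, and the schedule-formula manipulation culminating in Corollary~\ref{cor:square-to-dyck}. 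So far so good, and this is precisely the paper's conditional proof of Conjecture~\ref{conj:gen-valley-square-2}.

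The gap is step 5. You assert that a ``cyclic-rotation-type bijection, adapted from \cite{Can-Loehr-2006,Leven-2016}'' produces the identity
\[
\frac{[n]_t}{[n-k]_t}\sum_{\pi\in\LSQ'(m,n)^{\bullet k}}q^{\dinv(\pi)}t^{\area(\pi)}x^\pi \;=\; \sum_{\pi\in\LSQ(m,n)^{\bullet k}}q^{\dinv(\pi)}t^{\area(\pi)}x^\pi.
\]
No such argument appears in the paper, and none is known. First, for $k>0$ the factor $[n]_t/[n-k]_t$ is in general not a polynomial in $t$ (e.g.\ $n=3$, $k=1$), so the identity already presupposes a nontrivial divisibility of the $\LSQ'$ generating function by $[n-k]_t$ that would itself need proof. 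Second, the cyclic shifts used in \cite{Can-Loehr-2006,Leven-2016} act on the area and produce clean $[n]_t$-type orbit sums only in the undecorated case $k=0$, where indeed $[n]_t/[n-k]_t=1$ and $\LSQ'=\LSQ$, making the step vacuous. For $k>0$ the difference $\LSQ(m,n)^{\bullet k}\setminus\LSQ'(m,n)^{\bullet k}$ consists of paths whose positive base-diagonal labels are all decorated valleys, and there is no rotation compatible with both $\dinv$ and the valley decorations that packages these into the required $t$-ratio. The paper accordingly leaves Conjecture~\ref{conj:gen-valley-square} (equivalently Conjecture~\ref{conj:gen-valley-square-theta}) open and proves only the $\LSQ'$ version; your proposal does not close that gap.
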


Notice that the symmetric function we propose for the valley version differs from the one appearing in the rise version (for $m=0$) as the multiplicative factor is the ratio of two $q$-analogues instead of two $t$-analogues. This suggests a potential extension of the conjecture to a version with both decorated rises and contractible valleys, possibly using the Theta operators appearing in \cite{DAdderio-Iraci-VandenWyngaerd-Theta-2019}. The power series associated to the obvious combinatorial extension, however, seems to be quasi-symmetric function which is not symmetric, and thus further investigation is required to find suitable statistics.

We can restate it in terms of Theta operators as follows.

\begin{conjecture}[(Generalised) Delta square conjecture, valley version]
	\label{conj:gen-valley-square-theta}
	\[ \frac{[n]_t}{[n-k]_t} \Delta_{h_m} \Theta_{e_k} \nabla \omega(p_{n-k}) = \sum_{\pi \in \LSQ(m,n)^{\bullet k}} q^{\dinv(\pi)} t^{\area(\pi)} x^\pi. \]
\end{conjecture}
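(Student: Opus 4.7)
The statement in question is merely a restatement of the previous conjecture using the Theta operators, so the required work is only to verify that the two proposed symmetric functions coincide; the combinatorial right-hand sides are literally identical. Thus my plan is simply to check that
\[ \frac{[n-k]_q}{[n]_q} \Delta_{h_m} \Delta_{e_{n-k}} \omega(p_n) = \frac{[n]_t}{[n-k]_t} \Delta_{h_m} \Theta_{e_k} \nabla \omega(p_{n-k}), \]
and this will be essentially a one-line deduction from the Corollary to Theorem~\ref{thm:theta-pn}.

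More precisely, the Corollary states the identity
\[ \frac{[n]_t}{[n-k]_t} \Theta_{e_k} \nabla \omega(p_{n-k}) = \frac{[n-k]_q}{[n]_q} \Delta_{e_{n-k}} \omega(p_n), \]
which is a direct consequence of Theorem~\ref{thm:theta-pn} together with the symmetry in swapping $q$ and $t$. Since $\Delta_{h_m}$ is a linear operator on $\Lambda$ (indeed, it is diagonal in the Macdonald basis), I would apply $\Delta_{h_m}$ to both sides of the Corollary's identity to obtain the equality of the two left-hand sides above.

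Combining this with Conjecture~\ref{conj:gen-valley-square} — whose right-hand side is exactly the combinatorial generating function appearing in Conjecture~\ref{conj:gen-valley-square-theta} — yields the equivalence of the two formulations. In particular, the content of this statement is not a new result beyond what is conjectured, but rather an observation that the same conjectural combinatorial identity admits the cleaner Theta-operator form, which will be the form most amenable to the subsequent argument deriving it from the valley Delta conjecture. There is no substantial obstacle here; the only thing to be careful about is that the Corollary is applied on $\Lambda^{(n-k)} \to \Lambda^{(n)}$ with the correct choice of $n$ and $k$, and that both sides lie in $\Lambda^{(m+n)}$ after applying $\Delta_{h_m}$, which is automatic from the degree considerations noted after Definition~\ref{def:theta}.
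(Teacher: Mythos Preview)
Your proposal is correct and matches the paper's treatment: the paper does not prove this statement (it remains a conjecture) but simply introduces it with ``We can restate it in terms of Theta operators as follows,'' relying on precisely the Corollary to Theorem~\ref{thm:theta-pn} together with the application of $\Delta_{h_m}$ to both sides, exactly as you outline. One small slip: $\Delta_{h_m}$ is degree-preserving (it is diagonal on the Macdonald basis with scalar eigenvalues), so both sides remain in $\Lambda^{(n)}$, not $\Lambda^{(m+n)}$; this does not affect your argument.
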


We need to state a refinement of the Delta conjecture, valley version, that naturally arises when stating it in terms of Theta operators. But first, we need another combinatorial definition. Let \[ \LSQ(m, n \backslash r)^{\bullet k} \coloneqq \{ \pi \in \LSQ(n)^{\bullet k} \mid \# \{ i \not \in dv \colon a_i = - \shift(\pi) \land w_i > 0 \} = r \}, \] which is the set of labelled valley-decorated square paths of size $m+n$ with $m$ labels equal to $0$ and $k$ decorations such that there are exactly $r$ steps which are neither $0$ labels not decorated valleys on the bottom-most diagonal, and let \[ \LD(m, n \backslash r)^{\bullet k} \coloneqq \LSQ(m, n \backslash r)^{\bullet k} \cap \LD(m, n)^{\bullet k}, \]	which is the subset of corresponding labelled valley-decorated Dyck paths. We state the following.

\begin{conjecture}[Touching Delta conjecture, valley version]
	\label{conj:valley-delta-touching}
	\[ \Theta_{e_k} \nabla E_{n-k, r} = \sum_{\pi \in \LD(n \backslash r)^{\bullet k}} q^{\dinv(\pi)} t^{\area(\pi)} x^\pi. \]
\end{conjecture}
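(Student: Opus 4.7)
The plan is to prove this identity by combining the Delta conjecture hypothesis with the schedule formula promised in the paper's abstract, using the plethystic definition of $E_{n-k,r}$ to refine by touches. Summing the desired identity over $r$ yields, by $e_{n-k} = \sum_r E_{n-k,r}$ and the partition $\LD(n)^{\bullet k} = \bigsqcup_r \LD(n\backslash r)^{\bullet k}$, the $m=0$ case of Conjecture~\ref{conj:valley-delta} rewritten via Theorem~\ref{thm:theta-en}. Thus the unrefined Delta conjecture can serve as input, and the task is to isolate the $r$-graded pieces.

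To access the $r$-grading I would apply the plethystic substitution $X \mapsto X\frac{1-z}{1-q}$ to Definition~\ref{def:Enk}, giving $e_{n-k}[X\tfrac{1-z}{1-q}] = \sum_r \tfrac{(z;q)_r}{(q;q)_r} E_{n-k,r}$. Applying the $\mathbb{Q}(q,t,z)$-linear operator $\Theta_{e_k}\nabla$ to both sides yields
\[ \Theta_{e_k}\nabla\!\left(e_{n-k}\!\left[X\tfrac{1-z}{1-q}\right]\right) = \sum_{r=0}^{n-k} \tfrac{(z;q)_r}{(q;q)_r}\,\Theta_{e_k}\nabla E_{n-k,r}. \]
Since $\{(z;q)_r/(q;q)_r\}_r$ are linearly independent in $z$, it suffices to realise the left-hand side as a combinatorial generating function of the form $\sum_r \tfrac{(z;q)_r}{(q;q)_r}\sum_{\pi\in\LD(n\backslash r)^{\bullet k}}q^{\dinv(\pi)}t^{\area(\pi)}x^\pi$. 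This is where the schedule formula enters: it expands $\sum_{\pi\in\LD(n)^{\bullet k}} q^{\dinv(\pi)}t^{\area(\pi)}x^\pi$ as a product over orderings of the vertical steps, and introducing a variable $z$ that tracks the $r$ positive-labelled non-decorated-valley steps on the main diagonal should produce factors that assemble into $(z;q)_r/(q;q)_r$. Extracting the coefficient of $(z;q)_r/(q;q)_r$ then delivers the touching identity.

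The hard part is the last step: realising the $z$-deformed combinatorial generating function as the plethystic image of the unrefined one under $X\mapsto X\frac{1-z}{1-q}$. The schedule formula must be organised so that the main-diagonal non-valley steps factor separably. Two technical subtleties to handle are that decorated valleys lying on the main diagonal contribute to the schedule weight but not to the $r$-count, so their factors must be shown to recombine without polluting the $(z;q)_r$ product, and that the bonus (tertiary) dinv from rows below the main diagonal contributes to the schedule weight but is $z$-insensitive, so it must factor cleanly out of the deformation. Once this bookkeeping is settled, the coefficient extraction is routine, and the touching identity follows.
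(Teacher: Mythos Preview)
The statement you are addressing is a \emph{conjecture}; the paper does not prove it. It is stated as an open refinement of the valley Delta conjecture and is then used as a hypothesis from which the (modified) valley Delta square conjecture is deduced. There is therefore no ``paper's proof'' to compare your proposal against.

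Your proposal is an attempt to derive the touching conjecture from the unrefined one (Conjecture~\ref{conj:valley-delta} at $m=0$), and it has a genuine gap. You correctly note that applying $\Theta_{e_k}\nabla$ to $e_{n-k}\bigl[X\tfrac{1-z}{1-q}\bigr]$ and expanding via Definition~\ref{def:Enk} gives $\sum_r \tfrac{(z;q)_r}{(q;q)_r}\,\Theta_{e_k}\nabla E_{n-k,r}$. But to match this with combinatorics you would need, on the other side, to apply the plethystic substitution $X\mapsto X\tfrac{1-z}{1-q}$ to the \emph{output} $\Theta_{e_k}\nabla e_{n-k}$ (which, under the assumed unrefined conjecture, equals the combinatorial sum). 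These two operations are not the same: $\Theta_{e_k}\nabla$ does not commute with the substitution $X\mapsto X\tfrac{1-z}{1-q}$, so
\[
\Theta_{e_k}\nabla\!\left(e_{n-k}\!\left[X\tfrac{1-z}{1-q}\right]\right)\ \neq\ \bigl(\Theta_{e_k}\nabla e_{n-k}\bigr)\!\left[X\tfrac{1-z}{1-q}\right]
\]
in general. Moreover, the substitution $X\mapsto X\tfrac{1-z}{1-q}$ on the combinatorial side acts on the \emph{label} variables $x_i$, whereas the statistic $r$ you want $z$ to track records a \emph{diagonal position}; there is no mechanism --- and the schedule formula (Theorem~\ref{thm:factorisation}) does not supply one --- that converts one into the other. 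The schedule formula factors the combinatorial enumerator over diagonal words and shifts, but it says nothing about the action of $\Theta_{e_k}\nabla$ on the $E_{n-k,r}$ individually.

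In short, the touching refinement is a genuine strengthening of the unrefined conjecture (in the same way that the compositional shuffle conjecture refines the shuffle conjecture), and one should not expect to recover it from the unrefined statement by a formal $z$-deformation. The paper accordingly leaves Conjecture~\ref{conj:valley-delta-touching} open and works conditionally on it.
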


It is immediate that Conjecture~\ref{conj:valley-delta-touching} implies the case $m=0$ of Conjecture~\ref{conj:valley-delta}, as it is enough to sum over $r$ and then apply Theorem~\ref{thm:theta-en}.

We need to state the same refinement for the generalised version too.

\begin{conjecture}[Generalised touching Delta conjecture, valley version]
	\label{conj:gen-valley-delta-touching}
	\[ \Delta_{h_m} \Theta_{e_k} \nabla E_{n-k, r} = \sum_{\pi \in \LD(m, n \backslash r)^{\bullet k}} q^{\dinv(\pi)} t^{\area(\pi)} x^\pi. \]
\end{conjecture}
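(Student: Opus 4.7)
Conjecture~\ref{conj:gen-valley-delta-touching} is a \emph{touching refinement} of Conjecture~\ref{conj:valley-delta} by the statistic $r$, i.e.\ the number of positive, non-decorated labels on the base diagonal. The plan is to isolate this refinement out of the plethystic decomposition $e_{n-k}=\sum_r E_{n-k,r}$ using the schedule formula developed in this paper. As a sanity check, $\LD(m,n)^{\bullet k}=\bigsqcup_r \LD(m, n\backslash r)^{\bullet k}$ and Theorem~\ref{thm:theta-en} together show that summing the claim over $r$ recovers precisely the generalised valley Delta conjecture, so any proof must at least be consistent with this reduction.

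To extract the $r$-refinement, I would use the generating-function identity
\[ e_{n-k}\!\left[X\frac{1-z}{1-q}\right] = \sum_r \frac{(z;q)_r}{(q;q)_r}\, E_{n-k, r}. \]
Reading this combinatorially, the variable $z$ should mark the number of positive, non-decorated base-diagonal labels, and the factor $(z;q)_r/(q;q)_r$ should appear directly as the bottom-diagonal contribution in the schedule formula. Concretely, one would factor the schedule enumerator of a path in $\LD(m,n)^{\bullet k}$ as a product of a ``base-diagonal piece'' and an ``upper piece''; the upper piece, after applying $\Delta_{h_m}\Theta_{e_k}\nabla$, should be identified with the appropriate plethystic specialisation, and matching coefficients of $(z;q)_r/(q;q)_r$ then yields the stated identity for each fixed $r$. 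An induction on $k$, with base case $k=0$ given by the touching version of the shuffle theorem (as in \cite{Haglund-Book-2008}), is the most natural structural framework.

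The main obstacle is that Conjecture~\ref{conj:valley-delta} is itself still open (only the cases $q=0$ are proved by Qiu and Wilson in \cite{Qiu-Wilson-2019}), so the refinement cannot simply be bolted on top of a known identity. A self-contained proof would likely need an independent recursion for $\Theta_{e_k}\nabla E_{n-k,r}$ via the $\Theta_f$-machinery of \cite{DAdderio-Iraci-VandenWyngaerd-Theta-2019}, with the shuffle theorem of Carlsson and Mellit as base case and the schedule formula furnishing the combinatorial side of each recursive step. The delicate point is that the valley refinement couples the $0$-labels, the decorated contractible valleys, and the base-diagonal touches in a nontrivial way; tracking the statistic $r$ through such a recursion will probably require bespoke bookkeeping beyond what the schedule formula provides out of the box, and it is precisely this interaction that makes a direct proof harder in the valley version than in its rise counterpart.
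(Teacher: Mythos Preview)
The statement you are attempting to prove is not a theorem in the paper; it is explicitly stated as a \emph{conjecture} (Conjecture~\ref{conj:gen-valley-delta-touching}), and the paper offers no proof of it. The paper's role for this statement is purely conditional: it shows (via the schedule formula and Corollary~\ref{cor:square-to-dyck}) that \emph{if} Conjecture~\ref{conj:gen-valley-delta-touching} holds, \emph{then} the modified generalised Delta square conjecture (Conjecture~\ref{conj:gen-valley-square-2}) also holds. So there is no ``paper's own proof'' to compare your attempt against.

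Your proposal is therefore not wrong so much as misdirected. You yourself identify the central obstruction: the generalised valley Delta conjecture (Conjecture~\ref{conj:valley-delta}) is open, and the touching refinement is at least as hard. Your suggested route --- extracting the $r$-refinement from the plethystic expansion of $e_{n-k}[X(1-z)/(1-q)]$ and matching it against a base-diagonal factor in the schedule formula --- is a reasonable heuristic, but it does not circumvent the need to actually identify the symmetric function $\Delta_{h_m}\Theta_{e_k}\nabla E_{n-k,r}$ with the combinatorial sum. The schedule formula in this paper organises the \emph{combinatorial} side; it does not by itself produce the required symmetric function identity. Likewise, an induction on $k$ with the touching shuffle theorem as base case would require a recursion for $\Theta_{e_k}\nabla E_{n-k,r}$ that is compatible with the valley statistics, and no such recursion is established here or in the cited literature.

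In short: this is an open problem, the paper does not claim to prove it, and your sketch --- while containing sensible ingredients --- does not constitute a proof and should not be presented as one.
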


We now want to state yet another version of the Delta square conjecture, using the set $\LSQ'(m,n)^{\bullet k}$ previously introduced.

\begin{conjecture}[Modified Delta square conjecture, valley version]
	\label{conj:valley-square-2}
	\[ \Theta_{e_k} \nabla \omega(p_{n-k}) = \sum_{\pi \in \LSQ'(n)^{\bullet k}} q^{\dinv(\pi)} t^{\area(\pi)} x^\pi. \]
\end{conjecture}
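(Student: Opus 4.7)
The plan is to derive Conjecture~\ref{conj:valley-square-2} from the touching version of the valley Delta conjecture (Conjecture~\ref{conj:valley-delta-touching}) by way of a cyclic-shift bijection that converts Dyck paths with a prescribed number of base-diagonal touchings into arbitrary square paths. This is the valley analogue of the reduction used by Sergel in \cite{Leven-2016} to deduce the classical square theorem from the shuffle theorem, adapted to the Theta-operator framework.

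First I would expand $\omega(p_{n-k})$ using Proposition~\ref{prop:pn_Enk} as
\[ \omega(p_{n-k}) = \sum_{r=1}^{n-k} \frac{[n-k]_q}{[r]_q} E_{n-k,r}, \]
apply the linear operator $\Theta_{e_k} \nabla$ to both sides, and substitute the (assumed) touching identity of Conjecture~\ref{conj:valley-delta-touching} to get
\[ \Theta_{e_k} \nabla \omega(p_{n-k}) = \sum_{r=1}^{n-k} \frac{[n-k]_q}{[r]_q} \sum_{\pi \in \LD(n \backslash r)^{\bullet k}} q^{\dinv(\pi)} t^{\area(\pi)} x^\pi. \]
The task then reduces to establishing the purely combinatorial identity
\[ \sum_{\pi \in \LSQ'(n)^{\bullet k}} q^{\dinv(\pi)} t^{\area(\pi)} x^\pi = \sum_{r=1}^{n-k} \frac{[n-k]_q}{[r]_q} \sum_{\pi \in \LD(n \backslash r)^{\bullet k}} q^{\dinv(\pi)} t^{\area(\pi)} x^\pi. \]

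To prove this identity I would construct a fiber-wise cyclic shift map: given $\pi \in \LSQ'(n)^{\bullet k}$ of shift $s$, let $r$ be the number of positive, non-decorated vertical steps lying on the base diagonal $y = x - s$ (well-defined and positive by definition of $\LSQ'$). Cutting the path at the bottom-most such special step and reattaching the prefix on top produces a Dyck path $\widetilde\pi \in \LD(n \backslash r)^{\bullet k}$ whose labelling, decorated-valley set, and monomial $x^\pi$ are preserved, and whose area equals $\area(\pi)$ (the shift is absorbed uniformly). The fiber over a given $\widetilde\pi$ is indexed by the choice of which of the $r$ special touchings to designate as the new starting point, equivalently, how far to rotate, and the difference $\dinv(\pi) - \dinv(\widetilde\pi)$ depends only on how many special touchings one rotates past. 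A direct computation in the spirit of the schedule formula shows that summing $q^{\dinv(\pi) - \dinv(\widetilde\pi)}$ over this fiber yields exactly $\frac{[n-k]_q}{[r]_q}$, as that generating function counts the non-equivalent cyclic rearrangements weighted by bonus-dinv contributions crossing the base diagonal.

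The main obstacle will be verifying that the cyclic shift interacts cleanly with all the decorated-valley conditions and with the bonus (tertiary) part of $\dinv$ from Definition~\ref{def:dinv}: one must check that contractible valleys remain contractible after rotation, that the count of nonzero labels below the diagonal shifts in the predicted way, and that the restriction defining $\LSQ'(n)^{\bullet k}$ (positive non-decorated step on the bottom diagonal) is exactly what makes the $r \geq 1$ constraint match the $E_{n-k,r}$ expansion. The schedule-formula machinery developed in the paper should provide the right bookkeeping, factoring both sides into a common product over schedule numbers and reducing the identity to a telescoping $q$-sum; this is where the bulk of the technical work lies.
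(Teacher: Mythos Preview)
Your overall reduction is exactly the paper's: assume the touching valley Delta conjecture, expand $\omega(p_{n-k})$ via Proposition~\ref{prop:pn_Enk}, and reduce to the combinatorial identity
\[ \sum_{\pi \in \LSQ'(n \backslash r)^{\bullet k}} q^{\dinv(\pi)} t^{\area(\pi)} x^\pi \;=\; \frac{[n-k]_q}{[r]_q} \sum_{\pi \in \LD(n \backslash r)^{\bullet k}} q^{\dinv(\pi)} t^{\area(\pi)} x^\pi. \]
Your final paragraph, pointing to the schedule formula as the bookkeeping device and a telescoping $q$-sum as the endgame, is also exactly right and is what the paper does.

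Where your proposal goes astray is the middle mechanism. The ``cut at a base-diagonal touch and reattach'' map does send a square path to a Dyck path with the same diagonal word, but the inverse is \emph{not} indexed by the $r$ touching points of the Dyck path: rotating a Dyck path at one of its main-diagonal touches produces another Dyck path, never a square path of positive shift. More concretely, a sum of $q^{\dinv(\pi)-\dinv(\widetilde\pi)}$ over an $r$-element fiber cannot yield $[n-k]_q/[r]_q$, which is a polynomial of degree $n-k-r$; already for a single Dyck path with $r=1$ touch you need a fiber of size $\ell+1$ (the number of diagonals), not $1$.

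The correct parameter is not the touching point but the \emph{shift} $s\in\{0,\dots,\ell\}$, holding the diagonal word $z$ fixed. The paper's Theorem~\ref{thm:factorisation} gives a closed product for $\LSQ_{q,t;x}(z,s)$; comparing adjacent shifts (Theorem~\ref{thm:shift-by-1}) yields
\[ \LSQ_{q,t;x}(z,s)=q^{b(z,s)}\,\frac{[\#\rho'_s - z^\bullet_{s-1}(0)]_q}{[\#\rho'_0]_q}\,\LD_{q,t;x}(z), \]
and summing over $s$ telescopes because $b(z,s)=\sum_{i<s}(\#\rho'_i - z^\bullet_{i-1}(0))$, giving the factor $[n-k]_q/[\#\rho'_0]_q$. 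Summing over diagonal words with $\#\rho'_0=r$ finishes. So the ``cyclic'' intuition survives only at the level of which run sits on the main diagonal, not as a literal path rotation; the actual work is the ratio computation between consecutive shifts via the schedule product, which is where you should put your effort.
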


This conjecture is new and it is more nice-looking than the other forms of the Delta square conjecture as it does not have any multiplicative correcting factor. It also extends nicely to the $m>0$ case, as follows.

\begin{conjecture}[Modified generalised Delta square conjecture, valley version]
	\label{conj:gen-valley-square-2}
	\[ \Delta_{h_m} \Theta_{e_k} \nabla \omega(p_{n-k}) = \sum_{\pi \in \LSQ'(m,n)^{\bullet k}} q^{\dinv(\pi)} t^{\area(\pi)} x^\pi. \]
\end{conjecture}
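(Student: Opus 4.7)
The plan is to deduce Conjecture~\ref{conj:gen-valley-square-2} from the generalised touching valley Delta conjecture (Conjecture~\ref{conj:gen-valley-delta-touching}) by combining Proposition~\ref{prop:pn_Enk} with a schedule-formula argument inspired by Sergel's proof of the square theorem \cite{Leven-2016}, adapted here to the $\Theta$-operator setting. The point of writing the target as $\Delta_{h_m}\Theta_{e_k}\nabla \omega(p_{n-k})$ rather than through $\Delta_{e_{n-k}}$ is that the $\omega(p_{n-k})$ factor expands cleanly into the $E_{n-k,r}$ basis, which is precisely the basis refined by the touching conjecture.

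The first step is to apply Proposition~\ref{prop:pn_Enk} to obtain
\[ \Delta_{h_m} \Theta_{e_k} \nabla \omega(p_{n-k}) = \sum_{r=1}^{n-k} \frac{[n-k]_q}{[r]_q}\, \Delta_{h_m} \Theta_{e_k} \nabla E_{n-k,r}, \]
and then invoke Conjecture~\ref{conj:gen-valley-delta-touching} to rewrite each term combinatorially. What remains is the purely combinatorial identity
\[ \sum_{\pi \in \LSQ'(m,n)^{\bullet k}} q^{\dinv(\pi)} t^{\area(\pi)} x^\pi = \sum_{r=1}^{n-k} \frac{[n-k]_q}{[r]_q} \sum_{\pi \in \LD(m, n \backslash r)^{\bullet k}} q^{\dinv(\pi)} t^{\area(\pi)} x^\pi. \]

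To prove this identity, I would apply the schedule formula developed earlier in the paper (an adaptation of \cite{Haglund-Sergel-2019} to partial labels and contractible valleys) to both sides, reducing each generating function to an explicit product of $q$-factors indexed by reading-word equivalence classes. I would then set up a rotation map sending $\pi \in \LSQ'(m,n)^{\bullet k}$ to the Dyck path obtained by cyclically rotating the steps of $\pi$ so that a chosen positive, non-decorated vertical step on the base diagonal becomes the first vertical step. Such a pivot exists precisely because of the primed condition in the definition of $\LSQ'$. Under this rotation the multiset of labels, the area, and the number of decorated valleys are all preserved, whereas $\dinv$ shifts by an amount that depends on the chosen pivot; aggregating the weights of the $r$ square paths that rotate to a given Dyck path in $\LD(m, n \backslash r)^{\bullet k}$ is what should produce the factor $\frac{[n-k]_q}{[r]_q}$.

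The main obstacle is the last step: one must show that the $\dinv$ shifts, summed over the admissible pivot choices for each Dyck path, telescope into the $q$-rational $\frac{[n-k]_q}{[r]_q}$. This requires careful tracking of how primary, secondary, and bonus inversions transform under cyclic rotation, and in particular a verification that the zero labels (when $m>0$) and decorated valleys (when $k>0$) rotate along without disturbing the combinatorial invariants---this is the role of the restriction to \emph{positive, non-decorated} pivots in the definition of $\LSQ'$. Once the schedule-level bookkeeping is in place, the two generating functions agree term-by-term and Conjecture~\ref{conj:gen-valley-square-2} follows.
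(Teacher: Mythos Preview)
Your reduction is exactly the paper's: expand $\omega(p_{n-k})$ via Proposition~\ref{prop:pn_Enk}, invoke Conjecture~\ref{conj:gen-valley-delta-touching} term by term, and reduce everything to the combinatorial identity
\[
\sum_{\pi \in \LSQ'(m,n\backslash r)^{\bullet k}} q^{\dinv(\pi)} t^{\area(\pi)} x^\pi \;=\; \frac{[n-k]_q}{[r]_q}\sum_{\pi \in \LD(m, n \backslash r)^{\bullet k}} q^{\dinv(\pi)} t^{\area(\pi)} x^\pi,
\]
which is the paper's Corollary~\ref{cor:square-to-dyck}. So the architecture is right.

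Where you diverge from the paper is in how you propose to prove that identity. The paper does \emph{not} use a rotation bijection on individual paths. It works entirely at the level of the diagonal word $z$: Theorem~\ref{thm:factorisation} gives an explicit product for $\LSQ_{q,t;x}(z,s)$ for each shift $s$, and Theorem~\ref{thm:shift-by-1} compares the product for shift $s$ to that for shift $s-1$ purely algebraically, showing that the ratio is $q^{\#\rho'_{s-1}-z^\bullet_{s-2}(0)}\,[\#\rho'_s-z^\bullet_{s-1}(0)]_q\big/[\#\rho'_{s-1}-z^\bullet_{s-2}(0)]_q$. Iterating and summing over $s=0,\dots,\ell$ telescopes to $[n-k]_q/[\#\rho'_0]_q$. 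No path is ever rotated.

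Your rotation sketch has a concrete problem: the claim that ``the $r$ square paths that rotate to a given Dyck path'' produce the factor $[n-k]_q/[r]_q$ is not how the factor arises. For a fixed diagonal word $z$ with $\#\rho'_0=r$, the number of admissible shifts is $\ell+1$ (the number of runs of $z$), not $r$, and the individual contributions $q^{b(z,s)}[\#\rho'_s-z^\bullet_{s-1}(0)]_q$ are what sum to $[n-k]_q$. Moreover, a step-level cyclic rotation does not obviously preserve the set of \emph{contractible} valleys (contractibility is a local condition that changes when the neighbouring steps change), nor the delicate first-row conditions on $0$ labels; this is precisely why the paper stays at the schedule-formula level and never attempts a bijection. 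If you drop the rotation map and instead compare the schedule products for consecutive shifts directly, you recover the paper's argument.
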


Our goal is to show that Conjecture~\ref{conj:gen-valley-delta-touching} implies Conjecture~\ref{conj:gen-valley-square-2}, and as a corollary that Conjecture~\ref{conj:valley-delta-touching} implies Conjecture~\ref{conj:valley-square-2}.
\section{Schedule numbers for repeated labels}

\begin{definition}
	Let $(\pi, w, dv)$ be a valley-decorated labelled square path with shift $s$. For $i \geq 0$ we set $\rho_i$ to be the marked word in the alphabet $\N$ consisting of the labels appearing in the $i$-th diagonal, marked with a $\bullet$ if it labels a decorated valley, in increasing order, where we consider $c <\;\stackrel{\bullet}{\raisebox{0 em}{c}}\;<c+1$. The \emph{diagonal word} of $(\pi, w, dv)$ is $\dw(\pi, w, dv) \coloneqq \rho_\ell \dots \rho_{1} \rho_{0}$.
\end{definition}

For example the diagonal word of the path in Figure~\ref{fig:diagonal-word} is $1243 \!\! \stackrel{\bullet}{\raisebox{0 em}{1}} \!\! 4 1 \!\! \stackrel{\bullet}{\raisebox{0 em}{1}}$. Notice that the $\rho_i$ are the runs of $\dw(\pi, w, dv)$, i.e. the maximal weakly increasing substrings (disregarding decorations). 

\begin{figure}[!ht]
	\begin{tikzpicture}[scale = 0.6]
			\draw[step=1.0, gray!60, thin] (0,0) grid (8,8);
			\draw[gray!60, thin] (3,0) -- (8,5);
			
			\draw[blue!60, line width=1.6pt] (0,0) -- (0,1) -- (1,1) -- (2,1) -- (3,1) -- (4,1) -- (4,2) -- (5,2) -- (5,3) -- (5,4) -- (6,4) -- (6,5) -- (6,6) -- (6,7) -- (7,7) -- (7,8) -- (8,8);
			
			\node at (3.5,1.5) {$\bullet$};
			\node at (5.5,4.5) {$\bullet$};
			
			\node at (0.5,0.5) {$2$};
			\draw (0.5,0.5) circle (.4cm); 
			\node at (4.5,1.5) {$1$};
			\draw (4.5,1.5) circle (.4cm); 
			\node at (5.5,2.5) {$1$};
			\draw (5.5,2.5) circle (.4cm); 
			\node at (5.5,3.5) {$4$};
			\draw (5.5,3.5) circle (.4cm); 
			\node at (6.5,4.5) {$1$};
			\draw (6.5,4.5) circle (.4cm); 
			\node at (6.5,5.5) {$3$};
			\draw (6.5,5.5) circle (.4cm); 
			\node at (6.5,6.5) {$4$};
			\draw (6.5,6.5) circle (.4cm); 
			\node at (7.5,7.5) {$1$};
			\draw (7.5,7.5) circle (.4cm);
		\end{tikzpicture}
	\caption{Square path of diagonal word $1243 \!\! \stackrel{\bullet}{\raisebox{0 em}{1}} \!\! 4 1 \!\! \stackrel{\bullet}{\raisebox{0 em}{1}}$} 
	\label{fig:diagonal-word}
\end{figure}

\begin{definition}
	Let $z \coloneqq \dw(\pi, w, dv)$ be the diagonal word of a valley-decorated labelled square path $(\pi, w, dv)$ such that $z = \rho_\ell \cdots \rho_0$, where the $\rho_i$'s are its runs. We define its \emph{$i$-th run multiplicity functions}
	\begin{align*}
		z_i \colon \N & \rightarrow \N \\
		 x & \mapsto \# \{ x \in \rho_i \mid x \text{  is undecorated}\} \\
		z_i^\bullet \colon \N & \rightarrow \N \\
		 x & \mapsto \# \{ x \in \rho_i \mid x \text{  is decorated}\}
	\end{align*}
	Notice that each function $z_i$ has finite support. 
\end{definition}

\begin{definition}
	Given $z=\rho_\ell, \dots, \rho_0$ the diagonal word of a square path $(\pi,w)$ with shift $s$, where the $\rho_i$ are the runs of $z$. We set $\tilde \rho_i$ to be the subword obtained from $\rho_i$ by deleting its decorated numbers and $\rho'_i$ the subword obtained from $\tilde \rho_i$ by deleting the zeros. Take $c \in z$, for $i \in \{0,\dots, \ell\}$, we define its \emph{schedule numbers} $w_{i,s}(c)$ as follows: For $c \in \N$
		\begin{align*}
			w_{i,s}(c) &\coloneqq 
			\begin{cases}
				\#\{d \in \tilde\rho_i \mid d>c \} + \#\{d \in \tilde\rho_{i-1} \mid d<c \}  & \text{if  } i\in \{s+1,\dots, \ell\} \\
				\#\{d \in \tilde\rho_i \mid d>c \} + 1 - \delta_{c,0} & \text{if  } i=s \\
				\#\{d \in \tilde\rho_i \mid d<c \} + \#\{d \in \tilde\rho_{i+1} \mid d>c \}  & \text{if  } i\in \{0,\dots, s-1\}
			\end{cases}\\
			w_{i,s}^\bullet (c) &\coloneqq \#\{d \in \tilde\rho_i \mid d<c \}+\#\{d \in \tilde\rho_{i+1} \mid d>c \} - \delta_{c,0}\delta_{i,s-1} 
		\end{align*}

Careful! These cardinalities of multisets take into account the multiplicities. 
\end{definition}


\begin{theorem}
	\label{thm:factorisation}
	Let $z$ be a marked word in the alphabet $\N$ and let $\rho_\ell, \dots, \rho_0$ be the runs of $z$, so that $z = \rho_\ell \cdots \rho_0$. Let $b(z,s) \coloneqq \sum_{i=0}^{s-1} \sum_{c \in \N} z_i(c) - z_{i-1}^{\bullet}(c)$ and $x^z\coloneqq \prod_{c\in z}x_c$. Then	
	\[ \sum_{\substack{\pi \in \LSQ(n)^{\bullet k} \\ \shift(\pi) = s \\ \dw(\pi) = z }} q^{\dinv(\pi)}t^{\area(\pi)} x^\pi 
	= t^{\maj(z)} q^{b(z,s)} \prod_{i=0}^\ell\left( \prod_{c \in \N} \qbinom{ w_{i,s}(c) + z_i(c) - 1}{z_i(c)}_q  q^{z_i^\bullet (c)\choose 2}\qbinom{w_{i,s}^\bullet (c)}{z_i^\bullet(c)}_q \right)x^z. \]
\end{theorem}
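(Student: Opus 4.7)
The plan is to combine a direct computation of the $t$-factor with a schedule-style insertion argument for the $q$-factor, adapting \cite{Haglund-Sergel-2019} to our repeated-label, valley-decorated setting. First, for any $\pi$ contributing to the sum, the area depends only on the path (not on labels or decorations), so it is determined by $\shift(\pi)=s$ and $\dw(\pi)=z$. Since each vertical step whose label lies in the run $\rho_i$ sits on the diagonal $y=x+i-s$ and contributes $i$ to the area, we get $\area(\pi)=\sum_{i=0}^{\ell}i|\rho_i|$. On the other hand, $z=\rho_\ell\cdots\rho_0$ is the concatenation of weakly increasing runs, so the descents of $z$ lie exactly at the boundaries between consecutive runs and $\maj(z)=\sum_{i=1}^{\ell}(|\rho_\ell|+\cdots+|\rho_i|)=\sum_{i=0}^{\ell}i|\rho_i|$. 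This yields the global factor $t^{\maj(z)}$ and reduces the theorem to an identity counting, with appropriate $q$-weights, the labellings realising the given diagonal word with the prescribed shift.

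Next I would fix $s$ and $z$ and run an insertion procedure that reconstructs all valid paths one label at a time. The order will be: start from the main diagonal $i=s$, then sweep upward through $i=s+1,\dots,\ell$, and finally sweep downward through $i=s-1,\dots,0$; within each diagonal process labels in increasing value and, for each value $c$, insert the $z_i^\bullet(c)$ decorated copies after the $z_i(c)$ undecorated copies. At the moment a new copy of a label is inserted, the legal insertion slots are exactly the rows on diagonal $i$ that are consistent with (i) the strict-increase column rule, (ii) the valley condition (for decorated copies), and (iii) the already-placed labels on the adjacent diagonal. Each choice of slot contributes an explicit amount to $\dinv(\pi)$ coming from primary and secondary inversions with labels on diagonals $i$ and $i\pm1$, and this count is exactly encoded by the quantities $w_{i,s}(c)$ and $w_{i,s}^\bullet(c)$; the three cases of the definition of $w_{i,s}(c)$ reflect the direction of the sweep relative to the main diagonal.

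Because identical undecorated copies of the same label are indistinguishable and can occupy any of the $w_{i,s}(c)$ slots with repetition, the resulting $q$-weighted count over their placements is the $q$-multichoose $\qbinom{w_{i,s}(c)+z_i(c)-1}{z_i(c)}_q$. Decorated copies, by contrast, correspond to columns with a horizontal step immediately before the vertical one and, by Definition~\ref{def:valley}, cannot stack in the same slot; ordering them produces the factor $q^{\binom{z_i^\bullet(c)}{2}}$ and choosing the $z_i^\bullet(c)$ occupied slots among the $w_{i,s}^\bullet(c)$ available ones gives $\qbinom{w_{i,s}^\bullet(c)}{z_i^\bullet(c)}_q$. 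Finally, the global prefactor $q^{b(z,s)}$ collects the bonus dinv from the labels lying strictly below the main diagonal together with the $-\#dv$ correction from decorated valleys on those diagonals; the formula $b(z,s)=\sum_{i=0}^{s-1}\sum_{c\in\N}(z_i(c)-z_{i-1}^\bullet(c))$ is precisely this bookkeeping.

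The main obstacle will be justifying, in detail, that the insertion order really produces each path in $\LSQ(n)^{\bullet k}$ with the prescribed $\shift$ and $\dw$ exactly once and that the dinv accumulated step by step matches $\dinv(\pi)$ exactly. The delicate points are the behaviour at the main diagonal (hence the terms $+1-\delta_{c,0}$ in the definition of $w_{s,s}(c)$ and the $-\delta_{c,0}\delta_{i,s-1}$ in $w_{i,s}^\bullet(c)$, which account for the mandatory positive label on the bottom-most diagonal and the first-row constraint), and the interaction of decorated copies of a label with the undecorated copies of the same label on adjacent diagonals, which must be organised so that each inversion $(i,j)$ is charged to the insertion of the later of the two. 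Once this case analysis is settled, the product over $i$ and $c$ of the local $q$-binomial contributions assembles into the right-hand side of the formula, and summing the monomials $x^\pi$ gives the uniform factor $x^z$.
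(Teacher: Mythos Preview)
Your overall strategy---a schedule-style insertion argument adapted from \cite{Haglund-Sergel-2019}, with the $t$-factor handled by the direct $\area=\maj(z)$ computation---is exactly the paper's approach. However, two concrete choices in your proposed insertion scheme do not line up with the definitions and would have to be changed.

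First, you say you will process labels ``in increasing value'' within every diagonal. For diagonals $i\ge s$ this is the wrong order: the definition $w_{i,s}(c)=\#\{d\in\tilde\rho_i:d>c\}+\cdots$ (and the $i=s$ case) counts labels \emph{larger} than $c$ in the same diagonal among the available slots, so those must already be in place when $c$ is inserted. The paper accordingly inserts from biggest to smallest on and above the main diagonal, and from smallest to biggest below it; the asymmetry in the three cases of $w_{i,s}(c)$ is precisely this.

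Second, you interleave decorated and undecorated copies within each diagonal. The paper instead inserts \emph{all} undecorated labels (in every diagonal) first, and only then the decorated ones. This matters because $w_{i,s}^\bullet(c)=\#\{d\in\tilde\rho_i:d<c\}+\#\{d\in\tilde\rho_{i+1}:d>c\}-\delta_{c,0}\delta_{i,s-1}$ references undecorated labels in diagonal $i+1$; under your upward sweep, that diagonal is still empty when you place decorated $c$'s in diagonal $i$, so the slot count does not equal $w_{i,s}^\bullet(c)$.

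Finally, the substantive content of the paper's proof is a lengthy case analysis establishing that, for each subset $T\subseteq S_{i,c}$ of size $z_i^\bullet(c)$, there is a \emph{unique} valid way to insert the corresponding decorated valleys (with separate arguments above and below the main diagonal, and a special treatment of $c=0$ at $i=s-1$ accounting for the $-\delta_{c,0}\delta_{i,s-1}$). You correctly identify this bijectivity as ``the main obstacle,'' but it is the core of the argument rather than a routine detail, and your sketch does not indicate how you would carry it out.
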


The proof of this result is similar to the one described by Haglund and Sergel in \cite[Theorem 3.2]{Haglund-Sergel-2019} for Dyck paths, except that we consider repeated labels. For the sake of completeness, we repeat some of their arguments here.

\begin{proof}
	Let us begin by noting that the right hand side of this equation consists of a finite number of terms different from $1$. Indeed $z_i(c) = z_i^\bullet(c) = 0$ for all but a finite number of elements of $\N$ and thus all but a finite number of $q$-binomials are equal to $1$, which means that the product is actually finite. 
	
	Next, observe that for any $\pi\in \LSQ(n)^{\bullet k}$ with $\dw(\pi)= z$ we trivially have $x^\pi=x^z$, so we only need to consider the $q,t$-enumerators. It is also not difficult to see that for any such path $\maj(z)=\area(\pi)$, indeed, 
	\begin{align*}
		\area(\pi) & = \ell \cdot \#\rho_{\ell} + (\ell-1)\cdot \#\rho_{\ell-1} + \cdots + 1 \cdot \#\rho_1 \\
		& = \rho_\ell + (\rho_\ell+\rho_{\ell-1}) + \cdots + (\rho_{\ell}+\rho_{\ell-1}+\cdots +\rho_1)= \maj(z).
	\end{align*} 
	
	For the dinv, we will construct all the paths of a given diagonal word and shift, starting from the empty path, all the while keeping track of the dinv. We do this by applying the procedure described below. For each step, we give illustrate by a (partial) construction of the paths with diagonal word $44223\!\!\stackrel{\bullet}{\raisebox{0 em}{3}}\stackrel{\bullet}{\raisebox{0 em}{3}}\stackrel{\bullet}{\raisebox{0 em}{0}}\!\!11\!\!\stackrel{\bullet}{\raisebox{0 em}{2}}$.
	
	First, we construct all the undecorated paths whose diagonal word is $\tilde \rho_s\dots \tilde \rho_0$. The decorations will be added in the last step.  
	
	\begin{enumerate}
		\item Starting from the empty path, we insert the numbers of $\tilde\rho_s$ into the main diagonal, starting with the biggest labels and draw the unique path whose vertical steps are labelled by these numbers. If $c$ is the biggest label in $\tilde\rho_s$ then $w_{s,s}(c)= 1$ and so $\qbinom{w_{s,s}+z_s(c)-1}{z_s(c)}_q=1$, indeed there is only one way to insert $z_s(c)$ labels equal to $c$ into the main diagonal and this creates 0 units of dinv because steps with the same label do not create dinv among each other. In general take $c$ a number of $\tilde \rho_s$. Then $w_{s,s}(c)$ is equal to the number of numbers in $\tilde\rho_s$ bigger than $c$, that have already be inserted, plus $1$ if $c \neq 0$. So when inserting $z_s(c)$ numbers equal to $c$ into the diagonal there are $\binom{w_{s,s}(c)-1+z_s(c)}{z_s(c)}$ ways to do it (even if $c$ is $0$, as the leftmost label on the main diagonal cannot be a $0$, which explains the absence of the extra $+1$) and the contribution to the dinv is counted by the $q$-analogue. Indeed we have to choose an interlacing between the $w_{s,s}(c)-1$ labels that are already there and the $z_s(c)$ labels equal to $c$ and each time a $c$ precedes a bigger label, one unit of dinv is created.
		
		\begin{figure}[!ht]
			\begin{center}
				\includegraphics{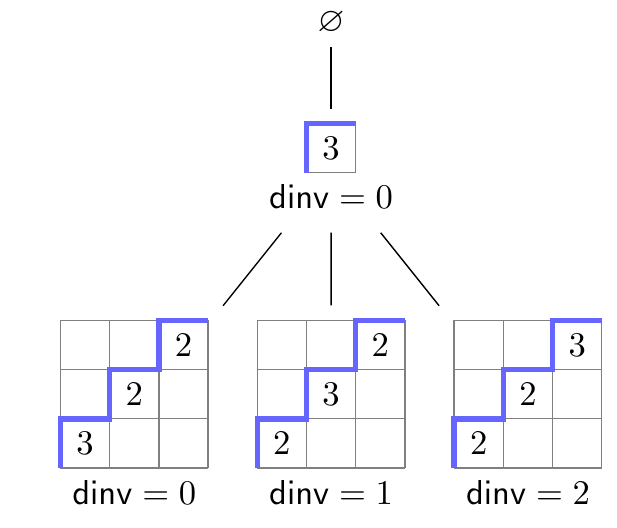}
			\end{center}
			\caption{Insertion of undecorated numbers into diagonal $y=x$}
		\end{figure}
		
	\item Next we add the numbers in $\tilde\rho_{s+1}, \tilde\rho_{s+2}, \dots, \tilde\rho_l$ (in that order) by adding the numbers in $\tilde \rho_i$ in the $i$-th diagonal and drawing the unique path with this labelling. As before, we insert the numbers from biggest to smallest in each run. When adding the $z_i(c)$ numbers equal to $c$ of $\tilde \rho_i$, we can add them on top of smaller numbers in $\tilde \rho_{i-1}$ or directly northeast of the numbers in $\tilde\rho_i$ that have already been inserted, i.e. that are bigger. There may also be consecutive $c$'s in the diagonal. So there are $w_{i,s}(c)$ labels after which the $c$'s may be inserted. So the insertion of the $z_i(c)$ labels equal to $c$ uniquely corresponds to an interlacing of the $z_i(c)$ $c$'s and $w_{i,s}(c)$ possible insertion positions, that does not start with a $c$ (indeed a $c$ must be inserted \emph{after} one of the $w_{i,s}(c)$ positions). Since each time one of the $c$'s precedes one of the $w_{i,s}(c)$ discussed labels, one unit of dinv is created (primary for bigger labels in $\tilde \rho_i$ and secondary for smaller labels in $\tilde \rho_{i-1}$), this dinv contribution is $q$-counted by $\qbinom{w_{s,i}(c)-1+z_i(c)}{z_i(c)}_q$.
	
	\begin{figure}[!ht]
		\begin{center}
			\includegraphics{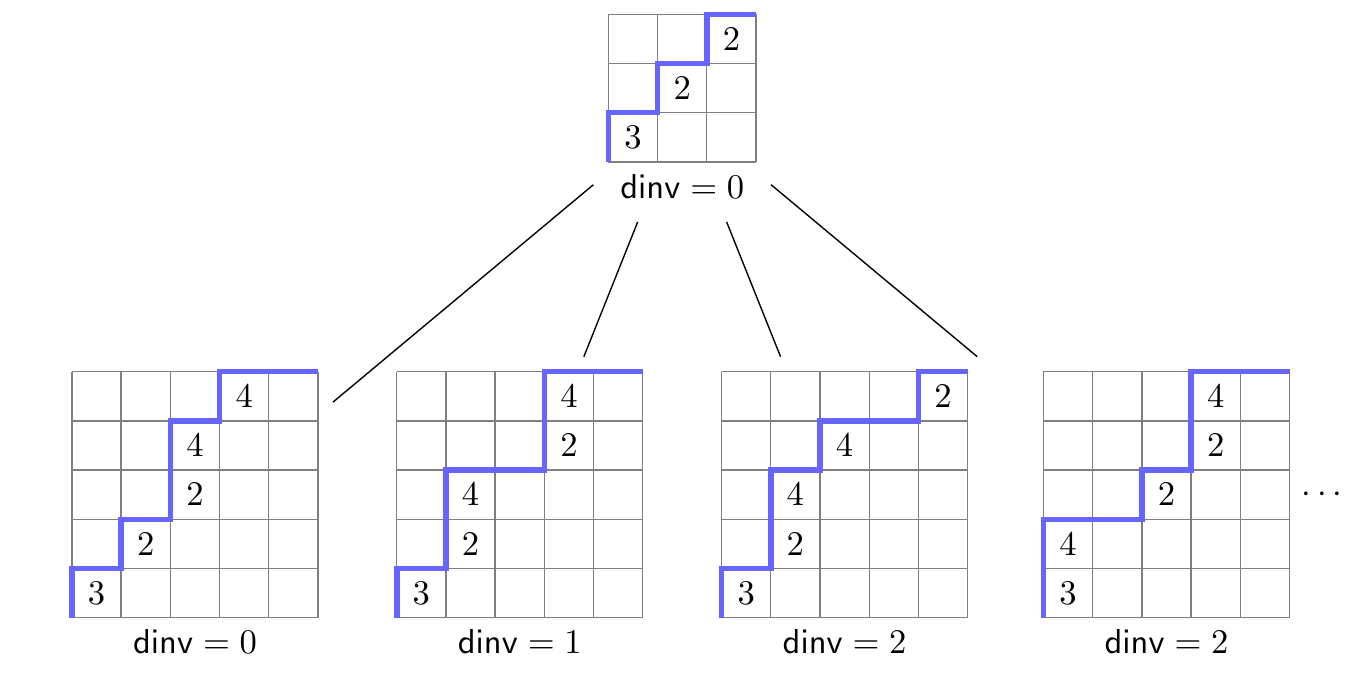}
		\end{center}
		\caption{Insertion of undecorated numbers into diagonal $y=x+1$}
	\end{figure}
	
	\item Next we add the numbers in runs $\tilde \rho_{s-1}, \tilde\rho_{s-2},\dots, \tilde\rho_0$ (in that order), this time from smallest to biggest numbers in each run. As before, we insert the numbers in run $\tilde\rho_i$ into the $i$-th diagonal. When adding the $z_i(c)$ numbers equal to $c$ in $\tilde\rho_i$, they can either be inserted directly underneath a bigger number from $\tilde \rho_{i+1}$ or directly southwest of a label of $\tilde \rho_i$ that has already been inserted, i.e. that is smaller. There may also be consecutive $c$'s in the diagonal. So there are again $w_{s,i}(c)$ places we may insert $c$. Note that the last $c$ must be underneath a bigger label from $\tilde \rho_{i+1}$, since the path must end with an east step. Thus choosing an interlacing between the $w_{i,s}(c)$ positions and the $z_i(c)$ $c$'s, ending with a label of the first kind, we get a unique insertion whose dinv is $q$-counted by $\qbinom{w_{s,i}(c)-1+z_i(c)}{z_i(c)}_qq^{\delta_{c>0} \sum_{i=0}^{s-1} \sum_{c \in \N} z_i(c)}$ since each time an insertion position precedes a $c$ a unit of dinv is created and each positive label underneath the main diagonal creates a unit of bonus dinv.
	
	\begin{figure}[!ht]	
		\begin{center}
			\includegraphics{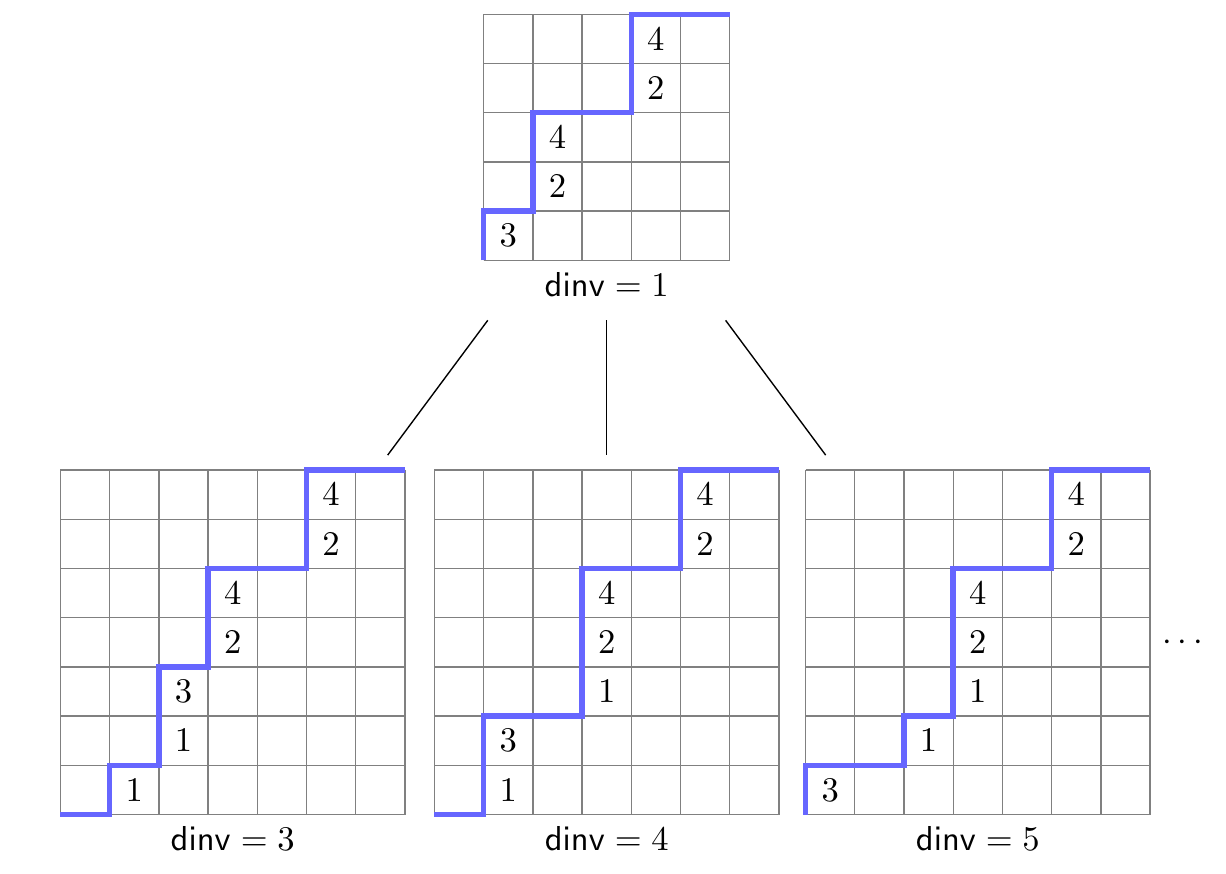}
		\end{center}
		\caption{Insertion of undecorated numbers into diagonal $y=x-1$}
	\end{figure}

	\item Finally we add the decorated labels. We will insert, for each $i$, $z_i^\bullet(c)$ decorated valleys labelled $c$ into the $i$-th diagonal.   Consider $S_{i,c}$, the set of size $w_{i,s}^\bullet(c)$, of all the steps that have already been inserted and would create dinv with a the decorated valley labelled $c$ inserted to its right in the $i$-th diagonal. In other words, the (undecorated) steps in the $i+1$-th diagonal with a label bigger than $c$ and the (undecorated) steps in the $i$-th diagonal with a label smaller than $c$. 
	
	The insertion is slightly different above and below the main diagonal. First, we consider the insertion above the main diagonal, take $i\in \{s,\dots, \ell\}$.
	Choose a subset $T\subseteq S_{i,c}$ of size $z_i^\bullet(c)$. We insert one decorated valley labelled $c$ in the $i$-th diagonal to the right of each element of $T$ and to the left of the next element of $S_{i,c}$. We claim that there is always a unique way to do this and that this yields all the possible paths (surjectivity).
		
		\textbf{Surjectivity and uniqueness.} There are two things to show. Firstly, we have to show that a decorated valley cannot be inserted to the left of all the elements is $S_{i,c}$. Indeed, we are inserting decorated valleys and decorations must be placed on contractible valleys. This means that if $(\pi, w, dv)$ is a valley-decorated labelled square path and $j\in dv$ then
		\begin{itemize}
			\item either $a_{j-1}=a_{j}$ and $w_{j-1}<w_{j}$, in which case $(j-1,j)$ is a primary inversion.
			\item or $a_{j-1}> a_j$ in which case there must be a $k<i$ such that $a_k= a_j$ and $a_{k+1}= a_k +1$ (in other words $k+1$ is a rise). Then either $w_{k+1}> w_j$, in which case $(k+1,j)$ is an inversion, or $w_{k+1}\leq w_j$, in which case $w_k < w_{k+1}\leq w_j$ so $(k,j)$ is an inversion. 
		\end{itemize}
		 It follows that when inserting a decorated valley at least one unit of dinv is created to its left, in other words, at least one element of $S_{i,c}$ is to its left.  
		
		Secondly, we must argue that there can never be two insertions in between two consecutive elements of $S_{i,c}$. In other words, there must always be an element of $S_{i,c}$ between two decorated valleys labelled $c$ in the $i$-th diagonal. Indeed 
		\begin{itemize}
			\item if one such valley is followed by a vertical step $s$, its label must be bigger then $c$ and lie in the $i+1$-th diagonal. Thus, $s$ is an element of $S_{i,c}$. 
			\item if one such valley is followed by a horizontal step the path hits the $i$-th diagonal at a point $p$. The next step cannot be another decorated valley or it would not be a \emph{contractible} valley.  Now we can apply a similar argument as above to the portion of the path starting from $p$ to deduce the existence of an element in $S_{i,c}$ that lies after $p$ and before the next occurence of the relevant decorated valleys.
		\end{itemize}
		This also implies that there exist no two distinct ways of inserting our decorated valley in between two elements of $S_{i,c}$. So the insertion must be unique.
		
		\textbf{Existence.} We now show that given an element $t$ of $S_{i,c}$, there always exists a way to insert a decorated valley of the discussed kind to its right and to the left of the the next element of $S_{i,c}$. It will follow that we can insert the valleys after elements of $T$ one by one and since each must lie strictly between elements of $S_{i,c}$ these insertions are independent. There some cases and subcases to consider. 
		
		\begin{itemize}
			\item [\textbf{Case 1.}]The step $t$ lies in the $i$-th diagonal, and thus is labelled with a number smaller than $c$, call it $S$.
			\begin{itemize}
				\item [\textbf{Case 1.1}] The step $t$ is followed by a vertical step.
					\begin{itemize}
						\item [\textbf{Case 1.1.1}] The label of the vertical step following $t$, is bigger than $c$. Call it $B$. Directly following $t$, insert a horizontal step followed by a decorated vertical step labelled $c$. Then continue the path with the vertical step labelled $B$, since $S<c<B$, the step labelled $c$ is a contractible valley and the condition on the columns of labelled paths is respected.
						
						\begin{figure}[H]
							\begin{center}
								\includegraphics{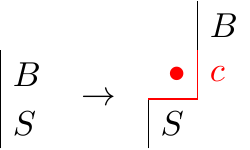}
							\end{center}							
						\end{figure}
					
						\item [\textbf{Case 1.1.2}] The label of the vertical step following $t$, is smaller than $c$. Call it $\tilde S$. Now consider the portion of the path between the endpoint of $t$ and the first point $p$ where the path crosses the $i+1$-th diagonal with two consecutive horizontal steps, one ending and one beginning at $p$ ($p$ always exists since the path must return to the main diagonal). If, in this portion of path, there is an occurence of a vertical step contained in the $i+1$-th diagonal and labelled with a number bigger then $c$, call it $B$, then this step is an element of $S_{i,c}$. Insert, directly before this step labelled $B$, a decorated vertical step labelled $c$ and before that step a horizontal step. Since $B$ is bigger than $c$, the labelling is valid. Furthermore, since the  discussed portion of the path stays weakly above the $i+1$-th diagonal, the step labelled $B$ must be preceded by a horizontal step and so the inserted valley is preceded by two horizontal steps and thus is contractible.
						 
						 Otherwise, the discussed portion of path does not contain an element of $S_{i,c}$. In that case, insert directly after the horizontal step ending at $p$ a horizontal step followed by a decorated vertical step labelled $c$ and continue the path with the horizontal step starting at $p$. Since the inserted decorated valley is preceded by two horizontal steps, it is always contractible.
						 
						 \begin{figure}[H]
						 	\begin{minipage}{.375 \textwidth}
						 		\centering
						 		\includegraphics{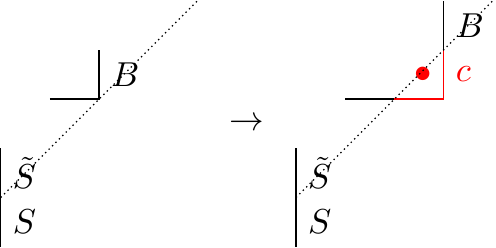}
						 	\end{minipage}%
						 	\begin{minipage}{.375 \textwidth}
						 		\centering
						 		\includegraphics{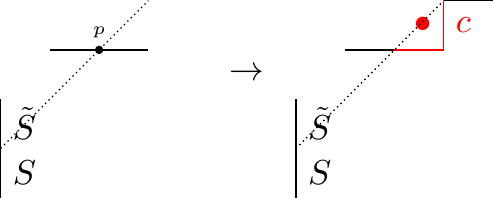}
						 	\end{minipage}
						 \end{figure}
					\end{itemize}
				
				\item [\textbf{Case 1.2}]  The step $t$ is followed by a horizontal step. In this case, we may simply insert, directly after $t$, a horizontal step followed by a decorated vertical step labelled $c$ and continue the path with the horizontal step that followed $t$. Since $S<c$, the valley we created is contractible.
				
				\begin{figure}[H]
					\begin{center}
						\includegraphics{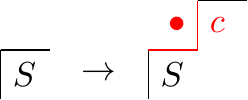}
					\end{center}
				\end{figure}
			\end{itemize}
		
			\item [\textbf{Case 2.}] The step $t$ lies in the $i+1$-th diagonal, and thus is labelled with a number bigger than $c$, call it $B$. The reasoning here is very similar to case 1.1.2.: after $t$ the path must cross the $i+1$-th diagonal. Depending on wether or not there is an element of $S_{i,c}$ in this portion of path, we have two types of insertions. We include a diagram of each situation.
			
			\begin{figure}[H]
				\begin{minipage}{.4 \textwidth}
					\centering
					\includegraphics{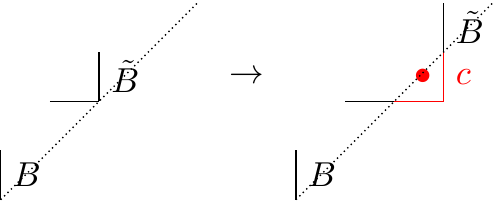}
				\end{minipage}%
				\begin{minipage}{.4 \textwidth}
					\centering
					\includegraphics{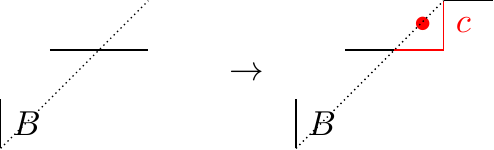}
				\end{minipage}
			\end{figure}				
		\end{itemize}
	
		It follows that the insertion of $z_i^\bullet(c)$ decorated valleys labelled $c$ into the $i$-th diagonal is equivalent to the choice of a subset $T\subseteq S_{i,c}$ of size $z_i^\bullet(c)$. As mentioned above, each such an insertion creates at least one unit of dinv, which can be thought of as being cancelled out by the subtraction of the number of decorated valleys in the computation of the dinv. Additionally, every pair of elements of $T$ creates one unit of dinv, as the decorated valley following the second one creates dinv with the first one. This explains the factor $q^{z_i^\bullet(c)\choose 2}$. Lastly one unit of dinv is created each time an element of $S_{i,c}\setminus T$ precedes an element of $T$ in the path, which explains $\qbinom{w_{i,s}^\bullet(c)}{z_i(c)}_q$
		
		Now for the insertion below the main diagonal, take $i\in \{0,\dots, s-1\}$. The argument is very similar, up to a few minor changes. Indeed, now it is no longer true that any decorated valley must create a unit of dinv with an element to its left. However, there must be an element of $S_{i,c}$ that is to the right of all the decorated valleys labelled $c$ in the $i$-th diagonal. Indeed let $t$ be such a decorated valley.
	
	\begin{itemize}
		\item if $t$ is followed by a vertical step, it must be contained in the $i+1$-th diagonal and its label must be bigger then $c$ so it must be an element of $S_{i,c}$; 
		\item if $t$ is followed by a horizontal step and so the path hits the $i$-th diagonal at a point $p$, which lies beneath the main diagonal. Since the path must end with an east step above the main diagonal we may deduce the existence of two consecutive vertical steps after $p$, the first one starting at the $i$-th diagonal. If the first of these steps has a label smaller that $c$, it is an element of $S_{i,c}$. If it is bigger of equal to $c$, the succeeding vertical step must have a label bigger than $c$ and thus must be in $S_{i,c}$. 
	\end{itemize}

	So given a subset $T\subseteq S_{i,c}$ of size $z_i^\bullet(c)$, we will insert a decorated valley in the $i$-th diagonal to the \emph{left} of each element in $T$ and to the right of all the elements of $S_{i,c}$ preceding the element it. 
	
	 Except for when $T$ contains the very first step of $S_{i,c}$, we can recycle the existence of insertion argument given above. Indeed, we can replace all the elements of $T$ with its predecessors in $S_{i,c}$, obtaining a set $\tilde T$ and apply the described insertions to the right. There is just one subtlety: in that discussion, we used the fact that if the path crosses the $i$-th diagonal vertically, it must proceed to cross it again, horizontally. This is not necessarily true if $i\in \{0,\dots, s-1\}$. However, as we have shown, there must be at least one element of $S_{i,c}$ to the right of all the elements of $\tilde T$ and this is sufficient to apply the same argument.	 
	 
	 The only thing that is now left to show that, for $i < s-1$, or $i = s-1$ and $c > 0$, it is (uniquely) possible to insert a decorated valley labelled $c$ into the $i$-th diagonal, to the left of all the elements in $S_{i,c}$. If $i = s-1$ and $c=0$, then this is not possible. Indeed by definition here cannot be a decorated valley labelled $0$ in the first row if the corresponding area letter is $-1$. Any other decorated valley labelled $0$ with area word $-1$ must be preceded by two horizontal steps (or the valley would not be \emph{contractible}) forcing a positive label on the main diagonal (which is an element of $S_{s-1,0}$) to appear on its left. This restriction explains the presence of the extra $-1$ in the definition of $w_{s-1,s}(0)$.
	 
	 If it exists, let us call $f$ the first vertical step of the existing path contained in the $i$-th diagonal ($f$ is not necessarily an element of $S_{i,c}$ as its label, call it $F$, might be bigger than $c$).
	 
	 \begin{itemize}
		\item[\textbf{Case 1.}] The step $f$ is an element of or occurs before every element in $S_{i,c}$. Then one of two situations occur. Either $f$ is preceded by a horizontal step, in which case that horizontal step must be preceded by another horizontal step, indeed if is were preceded by a vertical step, this would lie in the $i$-th diagonal and so $f$ would not be the first. In that case insert a horizontal step followed by a vertical decorated step labelled $c$ in between these two horizontal steps. Or, $f$ is preceded by a vertical step, so the begin point of $f$ is a point were the path crosses the $i$-th diagonal vertically. Since the path started at $(0,0)$, there must be a point before $f$ where the path crosses this diagonal horizontally, with two consecutive horizontal steps (notice that this point must be unique by definition of $f$). These two horizontal steps must be preceded by a third horizontal step because if not, this contradicts the definition of $f$. Insert a horizontal step followed by a decorated vertical step labelled $c$ directly after the first (from the left) of these three horizontal steps.
		
		\begin{figure}[H]
			\begin{minipage}{.38 \textwidth}
				\centering
				\includegraphics{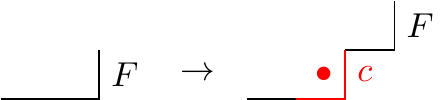}
			\end{minipage}%
			\begin{minipage}{.48 \textwidth}
				\centering
				\includegraphics{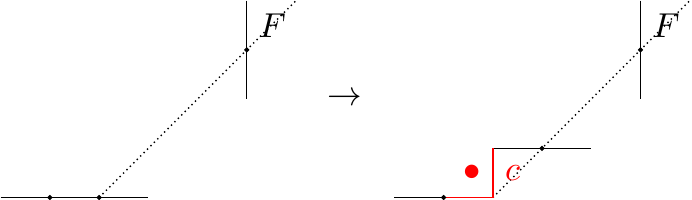}
			\end{minipage}
		\end{figure}

		\item[\textbf{Case 2.}] The leftmost element of $S_{i,c}$ is to the left of all (if any exist) the vertical steps in the $i$-th diagonal, in which case it must be a step $t$ in the $i+1$-th diagonal labelled with a number bigger than $c$, call it $B$. It follows that $t$ must be preceded by a horizontal step. Insert a horizontal step followed by a decorated vertical step labelled $c$ directly after this horizontal step.
		
		\begin{figure}[H]
			\begin{center}
				\includegraphics{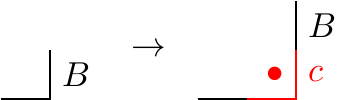}
			\end{center}
		\end{figure}		
	 \end{itemize}
	 Essentially the same argument as before ensures that there cannot be two decorated valleys inserted between two elements of $S_{i,c}	$ (or before all of its elements). This ensures the unicity of the insertion given a choice of $T$. Now any two inserted valleys must create one unit of dinv explaining and any element of $T$ succeeding an element of $S_{i,c}\setminus T$ creates a unit of dinv. Each inserted decorated non-zero valley below the diagonal creates a unit of bonus dinv by definition, which we can think of as getting cancelled out with the subtraction of the number of decorated non-zero valleys in the computation of the dinv. Furthermore, for each decorated zero valley below the diagonal, there must be a corresponding non-decorated non-zero valley, which also contributed one unit to the dinv that we can think of as cancelled out with the subtraction of the number of decorated zero valleys. 
	 The contribution to the dinv of this procedure is thus $q$-counted by $q^{z_i^\bullet (c)\choose 2}\qbinom{w_{i,s}^\bullet (c)}{z_i^\bullet(c)}_q$. 
	
	\begin{figure}[!ht]
		\begin{center}
			\includegraphics[scale=0.8]{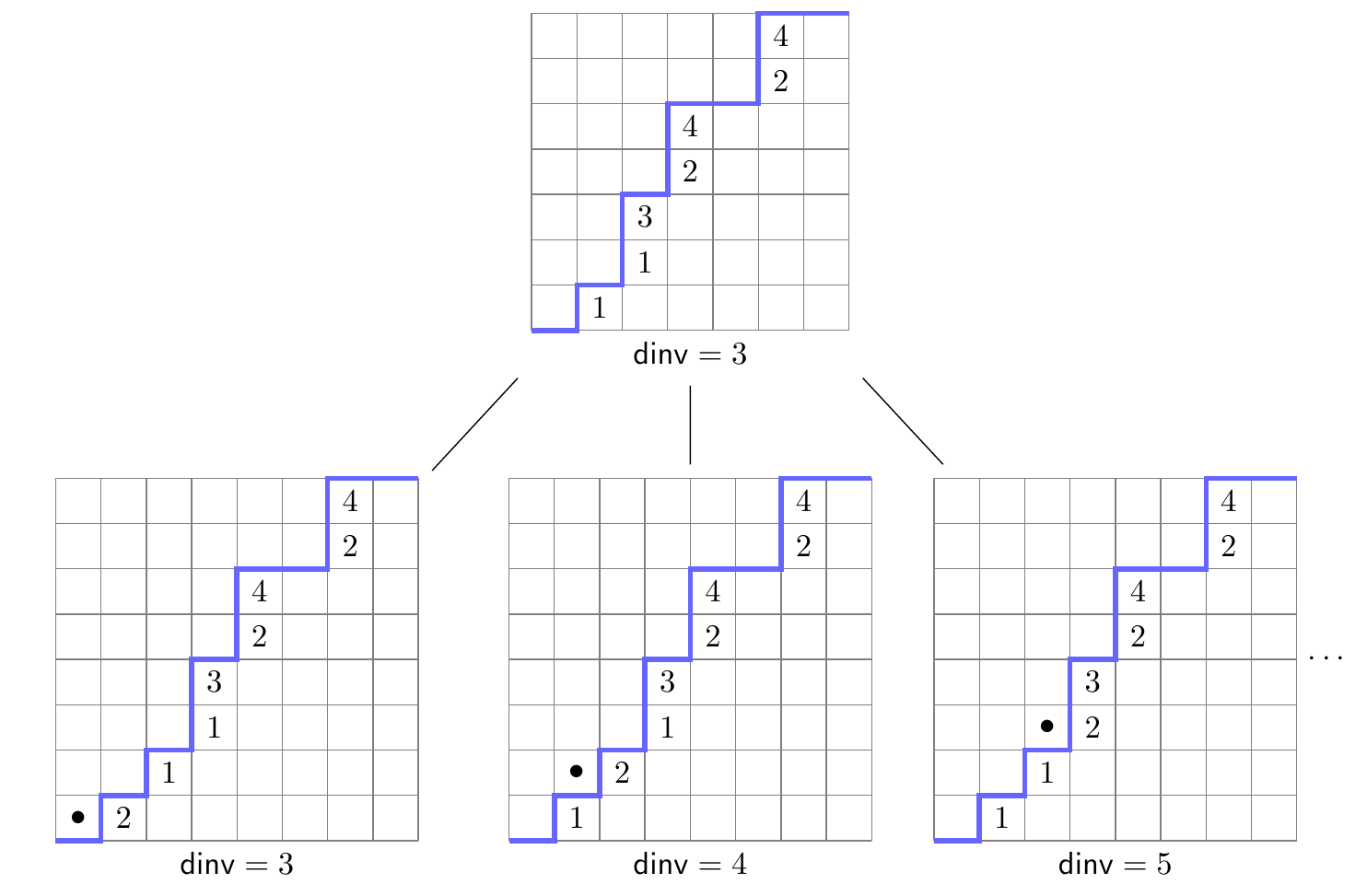}
		\end{center}
		\caption{Insertion of $\stackrel{\bullet}{\raisebox{0 em}{2}}$ into diagonal $y=x-1$}
	\end{figure}

	\begin{figure}[!ht]
		\begin{center}
			\includegraphics[scale=0.8]{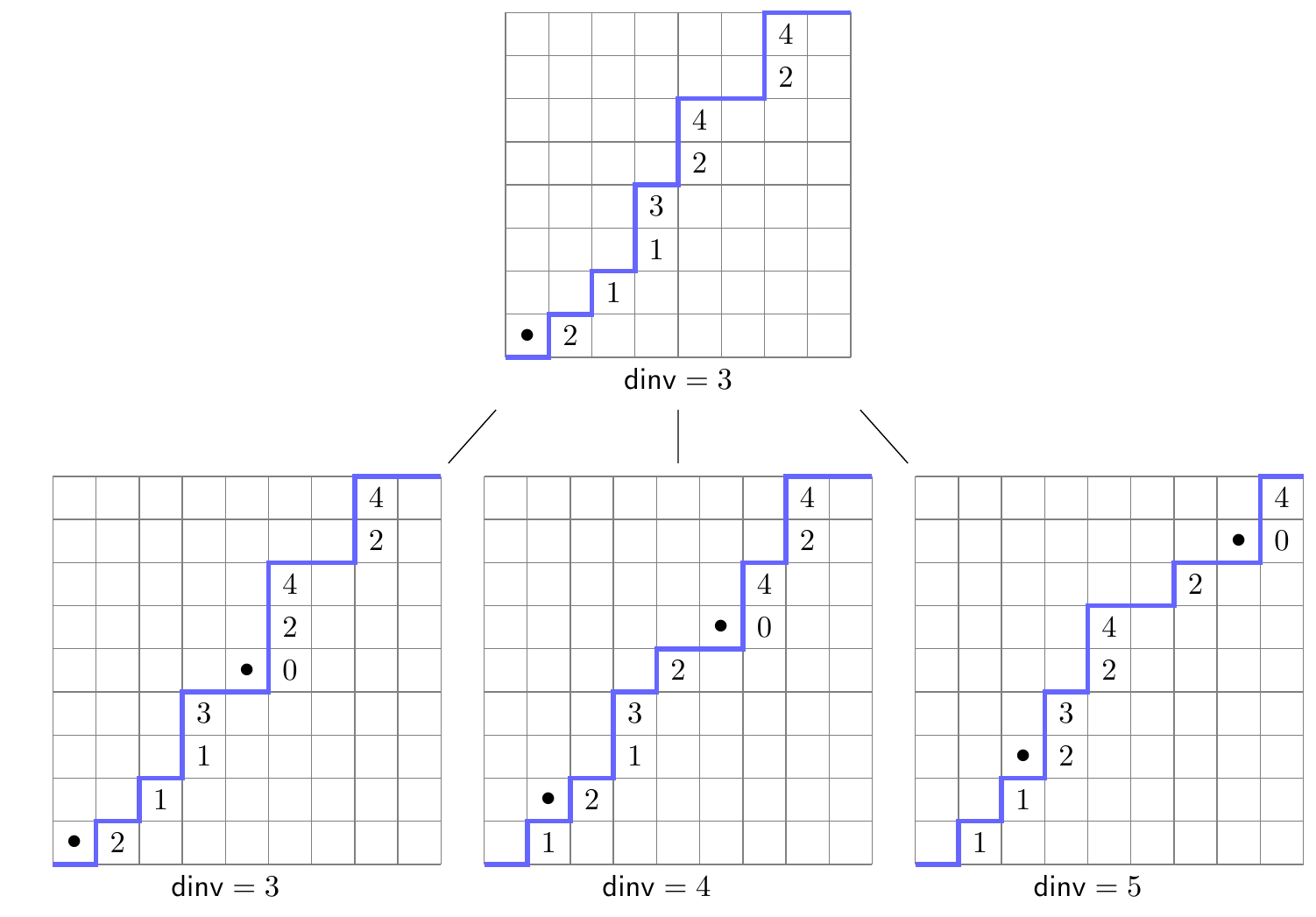}
		\end{center}
		\caption{Insertion of $\stackrel{\bullet}{\raisebox{0 em}{0}}$ into diagonal $y=x-1$}
	\end{figure}
	
	\begin{figure}[!ht]
		\begin{center}
			\includegraphics[scale=0.8]{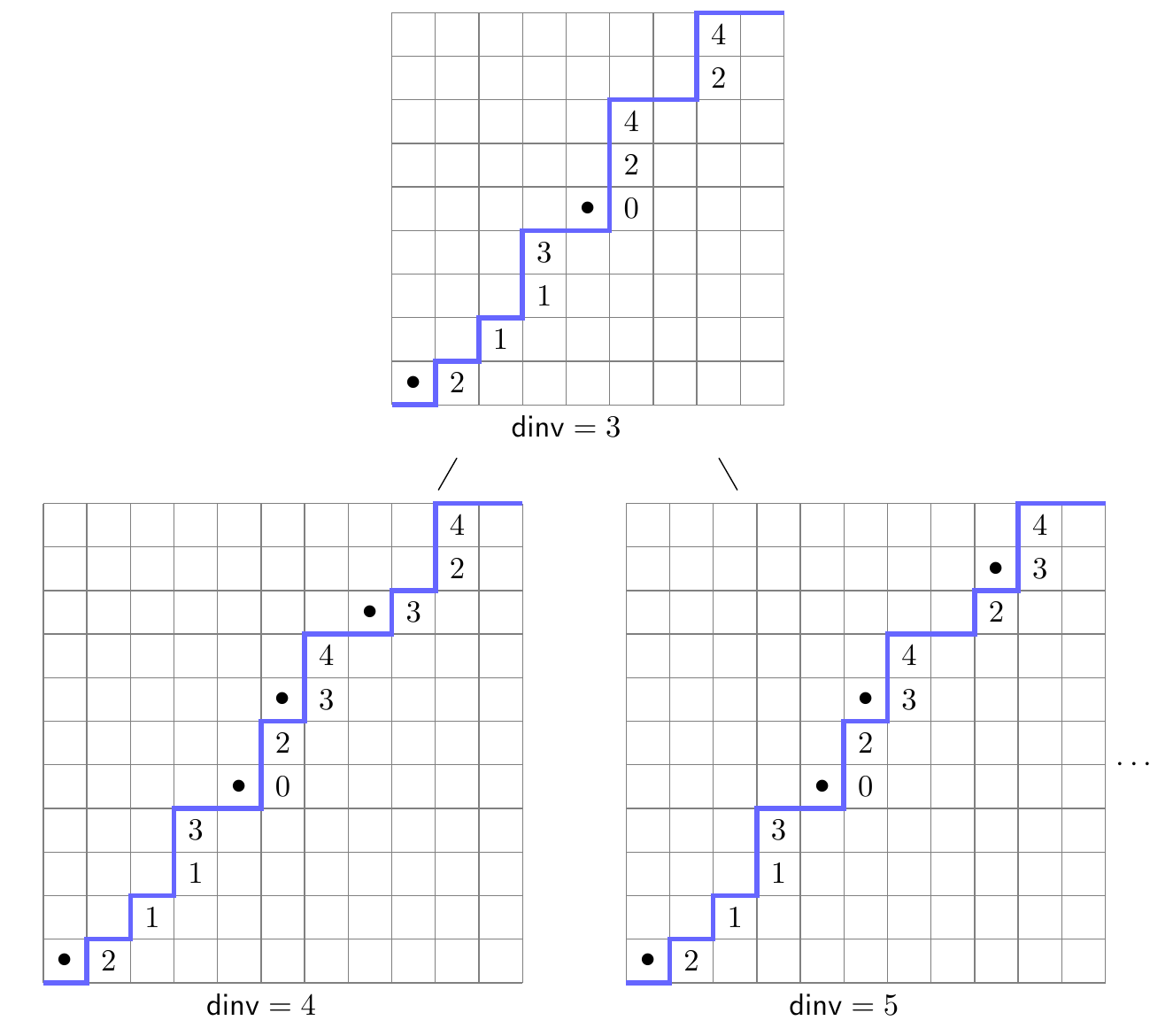}
		\end{center}
		\caption{Insertion of two $\stackrel{\bullet}{\raisebox{0 em}{3}}$'s into diagonal $y=x-1$}
	\end{figure}

	\end{enumerate}
\end{proof}

\section{The valley Delta implies the valley square}
%

Let us define \[ \LSQ_{q,t;x}(z,s) \coloneqq \sum_{\substack{\pi \in \LSQ(n)^{\bullet k} \\ \shift(\pi) = s \\ \dw(\pi) = z}} q^{\dinv(\pi)} t^{\area(\pi)} x^\pi, \]
where $z$ is the diagonal word of a path in $\LSQ(n)^{\bullet k}$. We have the following.

\begin{theorem}
	\label{thm:shift-by-1}
	\[ \LSQ_{q,t;x}(z,s) = q^{\# \rho'_{s-1}-z^\bullet_{s-2}(0)} \frac{[\# \rho'_{s}-z_{s-1}^\bullet(0)]_q}{[\# \rho'_{s-1}z_{s-1}^\bullet(0)]_q} \LSQ_{q,t;x}(z,s-1). \]
\end{theorem}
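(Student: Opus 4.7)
The plan is to apply Theorem~\ref{thm:factorisation} to both $\LSQ_{q,t;x}(z,s)$ and $\LSQ_{q,t;x}(z,s-1)$ and compare the two explicit products term by term. Since the diagonal word $z$ is fixed on both sides, the factors $t^{\maj(z)}$ and $x^z$ cancel immediately, and the identity reduces to a computation involving only the $q$-powers and $q$-binomials.

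The next observation is that the dependence of the schedule numbers on the shift is concentrated at two rows. Indeed, for $i\in\{0,\dots,s-2\}$ one has $w_{i,s}(c)=w_{i,s-1}(c)$ (both lie in the ``below'' regime), and for $i\in\{s+1,\dots,\ell\}$ one has $w_{i,s}(c)=w_{i,s-1}(c)$ (both lie in the ``above'' regime); the same is true for $w^\bullet_{i,s}(c)$. Hence all factors coming from rows other than $i=s-1$ and $i=s$ cancel in the ratio $\LSQ_{q,t;x}(z,s)/\LSQ_{q,t;x}(z,s-1)$. What is left is
\[
q^{b(z,s)-b(z,s-1)}\cdot\frac{P_{s-1,s}(s)\,P_{s,s}(s)}{P_{s-1,s-1}(s-1)\,P_{s,s-1}(s-1)},
\]
where $P_{i,j}(s)$ denotes the $q$-binomial product over $c\in\N$ corresponding to row $i$ under shift $s$. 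By the definition of $b(z,s)$, the prefactor is $q^{\sum_c(z_{s-1}(c)-z^\bullet_{s-2}(c))}$, which is the ``bonus dinv'' contribution attached to the new negative row.

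The core of the calculation is then an explicit simplification of this ratio. Under the change of shift, row $i=s$ swaps its role from ``main diagonal'' (the middle case in the definition of $w_{i,s}$) to ``above diagonal'', and row $i=s-1$ swaps from ``below diagonal'' to ``main diagonal''; correspondingly one of the $+1-\delta_{c,0}$ correction terms disappears while another appears. I would group the $q$-binomials in each row by rewriting $\qbinom{w+z-1}{z}_q=\frac{[w+z-1]_q!}{[w-1]_q![z]_q!}$ and cancel the matching factorial pieces, observing a telescoping over $c\in\N$ exactly as in \cite{Haglund-Sergel-2019} for Dyck paths. After the cancellation the product collapses to the ratio of single $q$-integers $[\#\rho'_s-z^\bullet_{s-1}(0)]_q/[\#\rho'_{s-1}-z^\bullet_{s-1}(0)]_q$, while the leftover power of $q$ combines with $q^{b(z,s)-b(z,s-1)}$ and reduces to $q^{\#\rho'_{s-1}-z^\bullet_{s-2}(0)}$.

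The main obstacle is the bookkeeping of the zero labels and the decorated valleys: the $\delta_{c,0}$ corrections in $w_{s,s}$, $w_{s-1,s-1}$, and $w^\bullet_{s-1,s}$ encode the nontrivial fact that the leftmost label of the main diagonal cannot be a $0$ and that a decorated $0$-valley in the row immediately above the main diagonal forces an extra positive label on the diagonal. Keeping $\tilde\rho_i$ and $\rho'_i$ (which differ precisely by the zero labels) straight, and tracking which terms in the telescoping are affected by these corrections, is where the argument becomes delicate; once it is handled, the identity falls out of routine $q$-binomial manipulation.
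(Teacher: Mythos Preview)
Your approach is essentially the paper's: apply Theorem~\ref{thm:factorisation} to both shifts, cancel $t^{\maj(z)}x^z$, isolate the few rows where the schedule numbers differ, and telescope the $q$-factorials. The paper carries out exactly this computation.

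There is one genuine bookkeeping slip, however. You assert that $w^\bullet_{i,s}(c)=w^\bullet_{i,s-1}(c)$ for $i\notin\{s-1,s\}$, i.e.\ that the decorated schedule numbers only change at the same two rows as the undecorated ones. This is not quite right. From the definition
\[
w^\bullet_{i,s}(c)=\#\{d\in\tilde\rho_i\mid d<c\}+\#\{d\in\tilde\rho_{i+1}\mid d>c\}-\delta_{c,0}\,\delta_{i,s-1},
\]
the shift $s$ enters only through $\delta_{i,s-1}$. Hence $w^\bullet_{i,s}(c)$ is independent of $s$ for all $c\neq 0$, and for $c=0$ the difference between shift $s$ and shift $s-1$ lives at rows $i=s-1$ and $i=s-2$, \emph{not} $i=s-1$ and $i=s$. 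Concretely the paper gets an extra ratio
\[
\frac{\qbinom{w^\bullet_{s-1,s}(0)}{z^\bullet_{s-1}(0)}_q\qbinom{w^\bullet_{s-2,s}(0)}{z^\bullet_{s-2}(0)}_q}{\qbinom{w^\bullet_{s-1,s-1}(0)}{z^\bullet_{s-1}(0)}_q\qbinom{w^\bullet_{s-2,s-1}(0)}{z^\bullet_{s-2}(0)}_q}
=\frac{[\#\rho'_{s-1}]_q}{[\#\rho'_{s}]_q}\cdot\frac{[\#\rho'_s-z^\bullet_{s-1}(0)]_q}{[\#\rho'_{s-1}-z^\bullet_{s-2}(0)]_q},
\]
which is precisely what supplies the $z^\bullet_{s-1}(0)$ and $z^\bullet_{s-2}(0)$ corrections in the final answer. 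Your ratio $P_{s-1,s}P_{s,s}/P_{s-1,s-1}P_{s,s-1}$ as written misses the $i=s-2$ piece and would not produce the $-z^\bullet_{s-2}(0)$ in the denominator. Once you correct the range of rows for the decorated factors, the rest of your outline matches the paper exactly.
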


\begin{proof}
	By Theorem~\ref{thm:factorisation}, we have
	
	\[ \LSQ_{q,t;x}(z,s) =  t^{\maj(z)} q^{b(z,s)} \prod_{i=0}^\ell\left( \prod_{c \in \N} \qbinom{ w_{i,s}(c) + z_i(c) - 1}{z_i(c)}_q  q^{z_i^\bullet (c)\choose 2}\qbinom{w_{i,s}^\bullet (c)}{z_i^\bullet(c)}_q \right)x^z. \]
	
	and
	
	\[ \LSQ_{q,t;x}(z,s-1) =  t^{\maj(z)} q^{b(z,s-1)} \prod_{i=0}^\ell\left( \prod_{c \in \N} \qbinom{ w_{i,s-1}(c) + z_i(c) - 1}{z_i(c)}_q  q^{z_i^\bullet (c)\choose 2}\qbinom{w_{i,s-1}^\bullet (c)}{z_i^\bullet(c)}_q \right)x^z. \]
	
	Now, by definition we have $b(z,s) - b(z, s-1) = \# \rho'_{s-1}-z^\bullet_{s-2}(0)$. Also, for $c\neq 0$ we have that $w_{i,s}^\bullet(c)$ does not depend on $s$ and $w_{i,s}^\bullet(0) = w_{i,s-1}^\bullet(0)$ for $i\not\in \{s-2,s-1\}$. Furthermore for $c \in \N$ we have that $w_{i,s}(c) = w_{i,s-1}(c)$ for $i \not \in \{s-1, s\}$. By simplifying all these terms, we get
	
	\begin{align*} 
	\LSQ_{q,t;x}(z,s) =  q^{\# \rho'_{s-1}-z_{s-2}^\bullet(0)} &\prod_{c \in \N} 
	\frac{
	\qbinom{ w_{s,s}(c) + z_s(c) - 1}{z_s(c)}_q 
	\qbinom{ w_{s-1,s}(c) + z_{s-1}(c) - 1}{z_{s-1}(c)}_q 
	}
	{
	\qbinom{ w_{s,s-1}(c) + z_s(c) - 1}{z_s(c)}_q 
	\qbinom{ w_{s-1,s-1}(c) + z_{s-1}(c) - 1}{z_{s-1}(c)}_q
		} 
	\\
	& \times \prod_{c\in \N}
	\frac{
	\qbinom{w_{s-1,s}^\bullet(0)}{z_{s-1}^\bullet(0)}_q
	\qbinom{w_{s-2,s}^\bullet(0)}{z_{s-2}^\bullet(0)}_q
	}
	{
	\qbinom{w_{s-1,s-1}^\bullet(0)}{z_{s-1}^\bullet(0)}_q
	\qbinom{w_{s-2,s-1}^\bullet(0)}{z_{s-2}^\bullet(0)}_q
	}
	\LSQ_{q,t;x}(z,s-1) 
	\end{align*}
	
	so all that's left to do is to compute the $q$-binomials and check that the product yields the desired result.
	
	Recall that
	\begin{align*}
		w_{s,s}(c) & = 1 - \delta_{c,0} + \sum_{a > c} z_s(a), \qquad && w_{s-1,s}(c) = \sum_{a > c} z_s(a) + \sum_{a < c} z_{s-1}(a), \\
		w_{s-1,s-1}(c) & = 1 - \delta_{c,0} + \sum_{a > c} z_{s-1}(a) \qquad && w_{s,s-1}(c) = \sum_{a > c} z_s(a) + \sum_{a < c} z_{s-1}(a).
	\end{align*} and that 
	\begin{align*}
		w_{s-1,s}^\bullet(0)  = \#\rho'_s-1 && w_{s-2,s}^\bullet(0) = \rho'_{s-1} &&
		w_{s-1,s-1}^\bullet(0)  = \#\rho'_s && w_{s-2,s-1}^\bullet(0) = \#\rho'_{s-1}-1
	\end{align*}
	
	Let $m = \max \{ c \in \N \mid z_s(c) > 0 \text{ or } z_{s-1}(c) > 0 \}$. We have
	\begin{align*}
		&\prod_{c \in \N} \frac{\qbinom{ w_{s,s}(c) + z_s(c) - 1}{z_s(c)}_q}{\qbinom{ w_{s-1,s-1}(c) + z_{s-1}(c) - 1}{z_{s-1}(c)}_q}  = \prod_{c=1}^{m} \frac{[z_{s-1}(c)]_q!}{[z_s(c)]_q!} \cdot \frac{[w_{s,s}(c) + z_s(c) - 1]_q!}{[w_{s,s}(c) - 1]_q!} \cdot \frac{[w_{s-1,s-1}(c) - 1]_q!}{[w_{s-1,s-1}(c) + z_{s-1}(c) - 1]_q!} \\
		& = \prod_{c=0}^{m} \frac{[z_{s-1}(c)]_q!}{[z_s(c)]_q!} \cdot \frac{[\sum_{a \geq c} z_s(a) - \delta_{c,0}]_q!}{[\sum_{a > c} z_s(a) - \delta_{c,0}]_q!} \cdot \frac{[\sum_{a > c} z_{s-1}(a)-\delta_{c,0}]_q!}{[\sum_{a \geq c} z_{s-1}(a) - \delta_{c,0}]_q!} \\
		& = \frac{[\sum_{a \geq 0} z_s(a)]_q!}{[\sum_{a > 0} z_s(a)]_q!} \cdot \frac{[\sum_{a > 0} z_{s-1}(a)]_q!}{[\sum_{a \geq 0} z_{s-1}(a)]_q!}\prod_{c=0}^{m} \frac{[z_{s-1}(c)]_q!}{[z_s(c)]_q!}  \prod_{c=1}^{m} \frac{[\sum_{a \geq c} z_s(a)]_q!}{[\sum_{a > c} z_s(a)]_q!} \cdot \frac{[\sum_{a > c} z_{s-1}(a)]_q!}{[\sum_{a \geq c} z_{s-1}(a)]_q!} \\
		& = \frac{[\#\tilde\rho_s-1]_q! [\#\rho'_{s-1}-1]_q!}{[\# \rho'_s-1]_q![\# \tilde \rho_{s-1}-1]_q!} \frac{[\sum_{a \geq 1} z_s(a)]_q!}{[\sum_{a \geq 1} z_{s-1}(a)]_q!} \cdot \prod_{c=0}^{m} \frac{[z_{s-1}(c)]_q!}{[z_s(c)]_q!} \prod_{c=1}^{m} \frac{[\sum_{a > c} z_s(a)]_q!}{[\sum_{a > c} z_s(a)]_q!} \cdot \frac{[\sum_{a > c} z_{s-1}(a)]_q!}{[\sum_{a > c} z_{s-1}(a)]_q!} \\
		& = \frac{[\#\tilde\rho_s-1]_q! [\#\rho'_{s-1}-1]_q!}{[\# \rho'_s-1]_q![\# \tilde \rho_{s-1}-1]_q!} \frac{[\# \rho'_s]_q!}{[\# \rho'_{s-1}]_q!} \cdot \prod_{c=0}^{m} \frac{[z_{s-1}(c)]_q!}{[z_s(c)]_q!} \\
		& = \frac{[\#\tilde\rho_s-1]_q!}{[\# \tilde \rho_{s-1}-1]_q!} \frac{[\# \rho'_s]_q}{[\# \rho'_{s-1}]_q} \cdot \prod_{c=0}^{m} \frac{[z_{s-1}(c)]_q!}{[z_s(c)]_q!}
	\end{align*}%
	and
	\begin{align*}
		\prod_{c \in \N} \frac{\qbinom{ w_{s-1,s}(c) + z_{s-1}(c) - 1}{z_{s-1}(c)}_q}{\qbinom{ w_{s,s-1}(c) + z_s(c) - 1}{z_s(c)}_q} & = \prod_{c=0}^{m} \frac{[z_s(c)]_q!}{[z_{s-1}(c)]_q!} \cdot \frac{[w_{s-1,s}(c) + z_{s-1}(c) - 1]_q!}{[w_{s-1,s}(c) - 1]_q!} \cdot \frac{[w_{s,s-1}(c) - 1]_q!}{[w_{s,s-1}(c) + z_s(c) - 1]_q!} \\
		& = \prod_{c=0}^{m} \frac{[z_s(c)]_q!}{[z_{s-1}(c)]_q!} \cdot \frac{[w_{s-1,s}(c) + z_{s-1}(c) - 1]_q!}{[w_{s,s-1}(c) + z_s(c) - 1]_q!} \\
		& = \prod_{c=0}^{m} \frac{[z_s(c)]_q!}{[z_{s-1}(c)]_q!} \prod_{c=0}^{m} \frac{[\sum_{a > c} z_s(a) + \sum_{a \leq c} z_{s-1}(a) - 1]_q!}{[\sum_{a \geq c} z_s(a) + \sum_{a < c} z_{s-1}(a) - 1]_q!} \\
		& = \frac{\prod_{c=0}^{m} [\sum_{a > c} z_s(a) + \sum_{a \leq c} z_{s-1}(a) - 1]_q!}{\prod_{c=0}^{m} [\sum_{a > c-1} z_s(a) + \sum_{a \leq c-1} z_{s-1}(a) - 1]_q!} \cdot \prod_{c=0}^{m} \frac{[z_s(c)]_q!}{[z_{s-1}(c)]_q!} \\
		& = \frac{\prod_{c=0}^{m} [\sum_{a > c} z_s(a) + \sum_{a \leq c} z_{s-1}(a) - 1]_q!}{\prod_{c=-1}^{m-1} [\sum_{a > c} z_s(a) + \sum_{a \leq c} z_{s-1}(a) - 1]_q!} \cdot \prod_{c=0}^{m} \frac{[z_s(c)]_q!}{[z_{s-1}(c)]_q!} \\
		& = \frac{[\sum_{a > m} z_s(a) + \sum_{a \leq m} z_{s-1}(a) - 1]_q!}{[\sum_{a > -1} z_s(a) + \sum_{a \leq -1} z_{s-1}(a) - 1]_q!} \cdot \prod_{c=0}^{m} \frac{[z_s(c)]_q!}{[z_{s-1}(c)]_q!} \\
		& = \frac{[\sum_{a \leq m} z_{s-1}(a) - 1]_q!}{[\sum_{a \geq 0} z_s(a) - 1]_q!} \cdot \prod_{c=0}^{m} \frac{[z_s(c)]_q!}{[z_{s-1}(c)]_q!} \\
		& = \frac{[\# \tilde \rho_{s-1} - 1]_q!}{[\# \tilde \rho'_s - 1]_q!} \cdot \prod_{c=0}^{m} \frac{[z_s(c)]_q!}{[z_{s-1}(c)]_q!}
	\end{align*}%
	and
	\begin{align*}
		\frac{\qbinom{w_{s-1,s}^\bullet(0)}{z_{s-1}^\bullet(0)}_q \qbinom{w_{s-2,s}^\bullet(0)}{z_{s-2}^\bullet(0)}_q}{\qbinom{w_{s-1,s-1}^\bullet(0)}{z_{s-1}^\bullet(0)}_q\qbinom{w_{s-2,s-1}^\bullet(0)}{z_{s-2}^\bullet(0)}_q} & = \frac{[w_{s-1,s}^\bullet(0)]_q![w_{s-2,s}^\bullet(0)]_q![w_{s-1,s-1}^\bullet(0)-z_{s-1}^\bullet(0)]_q![w_{s-2,s-1}^\bullet(0)-z_{s-2}^\bullet(0)]_q!}{[w_{s-1,s-1}^\bullet(0)]_q![w_{s-2,s-1}^\bullet(0)]_q![w_{s-1,s}^\bullet(0)-z_{s-1}^\bullet(0)]_q![w_{s-2,s}^\bullet(0)-z_{s-2}^\bullet(0)]_q!} 
		\\&= \frac{[\#\rho'_{s}-1]_q![\#\rho'_{s-1}]_q!}{[\#\rho'_{s}]_q![\#\rho'_{s-1}-1]_q!}
		\frac{[\#\rho'_{s}-z_{s-1}^\bullet(0)]_q![\#\rho'_{s-1}-1-z_{s-2}^\bullet(0)]_q!}{[\#\rho'_{s}-1-z_{s-1}^\bullet(0)]_q![\#\rho'_{s-1}-z_{s-2}^\bullet(0)]_q!}\\
		&=\frac{[\#\rho'_{s-1}]_q}{[\#\rho'_{s}]_q}
		\frac{[\#\rho'_{s}-z_{s-1}^\bullet(0)]_q}{[\#\rho'_{s-1}-z_{s-2}^\bullet(0)]_q}.
	\end{align*} 
	
	Taking the product we get, after obvious cancellations
	\begin{align*}
		\frac{[\#\rho'_{s}-z_{s-1}^\bullet(0)]_q}{[\#\rho'_{s-1}-z_{s-2}^\bullet(0)]_q}
	\end{align*}
	which is exactly what we wanted to show.
\end{proof}

\begin{corollary}
	If $\# \rho'_0 \neq 0$, then
	\[ \LSQ_{q,t;x}(z,s) = q^{b(z,s)} \frac{[\# \rho'_s - z_{s-1}^\bullet(0)]_q}{[\# \rho'_0]_q} \LD_{q,t;x}(z). \]
\end{corollary}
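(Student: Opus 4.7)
The plan is to iterate Theorem~\ref{thm:shift-by-1} repeatedly, reducing from shift $s$ down through $s-1, s-2, \ldots, 0$. Since a square path of shift $0$ is exactly a Dyck path (and every Dyck path is a square path of shift $0$), we have $\LSQ_{q,t;x}(z,0) = \LD_{q,t;x}(z)$, which will serve as the base of the iteration.

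Each application of Theorem~\ref{thm:shift-by-1} contributes a monomial $q^{\# \rho'_{i-1} - z^\bullet_{i-2}(0)}$ together with a ratio of $q$-integers whose numerator is $[\# \rho'_i - z^\bullet_{i-1}(0)]_q$ and whose denominator is $[\# \rho'_{i-1} - z^\bullet_{i-2}(0)]_q$. The key observation is that these ratios telescope: the denominator that arises in going from shift $i$ to shift $i-1$ is exactly the numerator that arises in going from shift $i-1$ to shift $i-2$. After all cancellations only the extremal terms survive, leaving $[\# \rho'_s - z^\bullet_{s-1}(0)]_q$ on top and $[\# \rho'_0 - z^\bullet_{-1}(0)]_q$ on the bottom, which collapses to $[\# \rho'_0]_q$ under the natural convention that diagonals with negative index are empty. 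The hypothesis $\# \rho'_0 \neq 0$ guarantees this denominator is nonzero.

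For the $q$-exponent, I would sum the contributions from each iteration and invoke the identity $b(z,i) - b(z, i-1) = \# \rho'_{i-1} - z^\bullet_{i-2}(0)$, which is precisely the recursion extracted in the proof of Theorem~\ref{thm:shift-by-1}. Since $b(z,0) = 0$ (the sum is empty), telescoping yields a total $q$-exponent of $b(z,s)$, exactly matching the desired formula.

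The argument is essentially mechanical once Theorem~\ref{thm:shift-by-1} is available, so there is no real obstacle; the only mild subtlety is ensuring the intermediate denominators $[\# \rho'_i - z^\bullet_{i-1}(0)]_q$ remain nonzero throughout the iteration so that the successive applications of Theorem~\ref{thm:shift-by-1} are all meaningful. This is automatic whenever the set of paths of the specified diagonal word and shift is nonempty, and in the degenerate case both sides vanish trivially.
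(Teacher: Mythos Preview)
Your proposal is correct and follows essentially the same approach as the paper's proof, which simply states that the result follows by applying Theorem~\ref{thm:shift-by-1} $s$ times and recalling that $z_{-1}^\bullet(0) = 0$. You have spelled out the telescoping of both the $q$-exponent and the ratio of $q$-integers in more detail than the paper does, and your remark about the intermediate denominators is a reasonable extra observation, but the underlying argument is the same.
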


\begin{proof}
	It follows immediately by applying \ref{thm:shift-by-1} $s$ times and recalling that $z_{-1}^\bullet(0) = 0$.
\end{proof}

\begin{corollary}
	\label{cor:square-to-dyck}
	\[ \LSQ'_{q,t;x}(n \backslash r)^{\bullet k} = \frac{[n-k]_q}{[r]_q} \LD_{q,t;x}(n \backslash r)^{\bullet k} \]
\end{corollary}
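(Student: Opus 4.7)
The plan is to partition the set $\LSQ'(n\backslash r)^{\bullet k}$ according to the diagonal word $z = \dw(\pi)$ and shift $s = \shift(\pi)$ of each path, then apply the previous corollary to each cell of the partition and sum. Since the condition defining $\LSQ'(n\backslash r)^{\bullet k}$ requires $r \geq 1$ non-decorated positive labels on the base diagonal, and this count equals $\#\rho'_0$, the partition is indexed by pairs $(z, s)$ where $z$ is a valid diagonal word satisfying $\#\rho'_0 = r$ and $s \in \{0, 1, \ldots, \ell\}$, with $\ell + 1$ equal to the number of runs of $z$. Crucially, the constraint $\#\rho'_0 = r$ depends only on $z$ and not on $s$, so the double sum factors cleanly.

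Applying the preceding corollary (which is valid precisely because $\#\rho'_0 = r \neq 0$), and noting that the absence of zero labels in the $m=0$ setting forces $z_{s-1}^\bullet(0) = 0$, I obtain
\[
\sum_{s=0}^{\ell} \LSQ_{q,t;x}(z, s) = \frac{\LD_{q,t;x}(z)}{[r]_q} \sum_{s=0}^{\ell} q^{b(z,s)} [\#\rho'_s]_q.
\]
To evaluate the inner sum, I would use the incremental identity $b(z,s) - b(z,s-1) = \#\rho'_{s-1} - z_{s-2}^\bullet(0)$ established in the proof of Theorem~\ref{thm:shift-by-1}, which for $m = 0$ collapses to $b(z,s) = \sum_{i=0}^{s-1} \#\rho'_i$. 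Setting $n_i \coloneqq \#\rho'_i$ so that $\sum_{i=0}^{\ell} n_i = n - k$, the inner sum is a clean telescoping:
\[
\sum_{s=0}^{\ell} q^{n_0 + \cdots + n_{s-1}} [n_s]_q = \sum_{s=0}^{\ell} \frac{q^{n_0+\cdots+n_{s-1}} - q^{n_0+\cdots+n_s}}{1-q} = \frac{1 - q^{n-k}}{1-q} = [n-k]_q.
\]

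Finally, summing over all valid $z$ with $\#\rho'_0 = r$ and observing that
\[
\sum_{z:\, \#\rho'_0 = r} \LD_{q,t;x}(z) = \LD_{q,t;x}(n\backslash r)^{\bullet k},
\]
I recover the desired identity. The main technical obstacle is simply the book-keeping for the increment of $b(z,s)$: once one accepts the formula from the proof of Theorem~\ref{thm:shift-by-1} (where the correction $-z_{s-2}^\bullet(0)$ vanishes in the $m=0$ case), the telescoping is immediate and the proof is essentially a one-line calculation. A minor subtlety worth verifying is that the admissible shifts for a fixed diagonal word with $\ell + 1$ runs do range over all of $\{0, 1, \ldots, \ell\}$, so that no terms are missed in the telescoping; this follows from the column strict-increase condition on labellings, which ensures that descents in the diagonal word coincide exactly with transitions between adjacent non-empty diagonals.
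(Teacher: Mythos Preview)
Your proof is correct and follows essentially the same route as the paper: partition by diagonal word and shift, apply the previous corollary to each piece, telescope the sum over shifts, and then sum over diagonal words with $\#\rho'_0 = r$. The one difference worth flagging is that you specialise to $m=0$ at the outset (so all $z_{s-1}^\bullet(0)$ terms vanish), whereas the paper's proof retains those terms and carries out the telescoping with $n_s = \#\rho'_s - z_{s-1}^\bullet(0)$, obtaining $\sum_s n_s = n-k$ in general; that extra generality is what lets the paper invoke this corollary for arbitrary $m$ in the subsequent theorem. Your argument is otherwise identical, and in the $m=0$ case your simplification is harmless.
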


\begin{proof}
	Given a marked word $z$ with $\ell$ runs and $\rho'_0 \neq 0$, we have
	\begin{align*}
		\sum_{s=0}^{\ell} \LSQ_{q,t;x}(z,s) & = \sum_{s=0}^{\ell} q^{b(z,s)} \frac{[\# \rho'_s  - z_{s-1}^\bullet(0)]_q}{[\# \rho'_0]_q} \LD_{q,t;x}(z) \\
		& = \frac{\sum_{s=0}^{\ell} q^{b(z,s)} [\# \rho'_s - z_{s-1}^\bullet(0)]_q}{[\# \rho'_0]_q} \LD_{q,t;x}(z) \\
		& = \frac{[\sum_{s=0}^{\ell} (\# \rho'_s - z_{s-1}^\bullet(0))]_q}{[\# \rho'_0]_q} \LD_{q,t;x}(z)
	\end{align*}
	and now taking the sum over all the marked words $z$ of length $n$ with $k$ decorations and $\rho'_0 = r$, since for any such $z$, $\sum_{s=0}^{\ell} \# \rho'_s = n - k + \sum_{s=0}^{\ell} z_{s-1}^\bullet(0)$ (the total number of non-decorated positive labels), the thesis follows immediately.
\end{proof}

\begin{theorem}[Conditional modified Delta square conjecture, valley version]
	If Conjecture~\ref{conj:gen-valley-delta-touching} holds, then so does Conjecture~\ref{conj:gen-valley-square-2}. As a special case, if Conjecture~\ref{conj:valley-delta-touching} holds, then so does Conjecture~\ref{conj:valley-square-2}.
\end{theorem}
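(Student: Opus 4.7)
The plan is to reduce the statement to the touching refinement via the expansion of $\omega(p_{n-k})$ in the basis $\{E_{n-k,r}\}_r$, and then to bridge from Dyck paths to square paths using Corollary~\ref{cor:square-to-dyck}.

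First, I would apply Proposition~\ref{prop:pn_Enk} (with $n$ replaced by $n-k$) together with the linearity of $\Delta_{h_m}\Theta_{e_k}\nabla$ to write
\[ \Delta_{h_m}\Theta_{e_k}\nabla\omega(p_{n-k}) \;=\; \sum_{r=1}^{n-k} \frac{[n-k]_q}{[r]_q}\, \Delta_{h_m}\Theta_{e_k}\nabla E_{n-k,r}. \]
Assuming Conjecture~\ref{conj:gen-valley-delta-touching}, each summand on the right equals $\tfrac{[n-k]_q}{[r]_q}\,\LD_{q,t;x}(m,n\backslash r)^{\bullet k}$, so the problem is reduced to identifying this weighted sum of Dyck-path generating functions with the square-path generating function on $\LSQ'(m,n)^{\bullet k}$.

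Next, I would upgrade Corollary~\ref{cor:square-to-dyck} to the partially labelled setting, namely
\[ \LSQ'_{q,t;x}(m,n\backslash r)^{\bullet k} \;=\; \frac{[n-k]_q}{[r]_q}\,\LD_{q,t;x}(m,n\backslash r)^{\bullet k}. \]
The extension is formal: Theorem~\ref{thm:factorisation} already accommodates zero letters in the diagonal word via its $\delta_{c,0}$ corrections, and the ratio produced by Theorem~\ref{thm:shift-by-1} only depends on the $q$-binomial structure of the schedule numbers. Summing over marked words $z$ of length $m+n$ with $m$ zero letters, $k$ decorated letters, and $\rho'_0=r$ then produces the same ratio, since $\sum_s \#\rho'_s$ counts positive undecorated labels and so equals $n-k$ regardless of $m$.

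Finally, since every path in $\LSQ'(m,n)^{\bullet k}$ has at least one positive undecorated label on the base diagonal by definition, the sets $\{\LSQ'(m,n\backslash r)^{\bullet k}\}_{r=1}^{n-k}$ partition $\LSQ'(m,n)^{\bullet k}$. Summing over $r$ therefore yields
\[ \Delta_{h_m}\Theta_{e_k}\nabla\omega(p_{n-k}) \;=\; \sum_{r=1}^{n-k}\LSQ'_{q,t;x}(m,n\backslash r)^{\bullet k} \;=\; \LSQ'_{q,t;x}(m,n)^{\bullet k}, \]
which is exactly Conjecture~\ref{conj:gen-valley-square-2}; the special case $m=0$ gives that Conjecture~\ref{conj:valley-delta-touching} implies Conjecture~\ref{conj:valley-square-2}. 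The one nontrivial step is the extension of Corollary~\ref{cor:square-to-dyck} to $m>0$, which I expect to be the main obstacle — but it should amount to routine bookkeeping with the schedule formula rather than requiring any new idea.
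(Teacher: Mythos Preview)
Your proposal is correct and follows essentially the same route as the paper: expand $\omega(p_{n-k})$ via Proposition~\ref{prop:pn_Enk}, invoke the touching conjecture, apply Corollary~\ref{cor:square-to-dyck} to pass from $\LD(m,n\backslash r)^{\bullet k}$ to $\LSQ'(m,n\backslash r)^{\bullet k}$, and sum over $r$. The only difference is one of emphasis: you flag the $m>0$ extension of Corollary~\ref{cor:square-to-dyck} as the main obstacle, whereas the paper treats it as already in hand, since Theorem~\ref{thm:factorisation} is stated for marked words in the alphabet $\N$ (with the $\delta_{c,0}$ corrections built in) and the proof of Corollary~\ref{cor:square-to-dyck} already tracks the $z^\bullet_{s-1}(0)$ terms; in particular your claim that $\sum_s \#\rho'_s = n-k$ should be read as $\sum_s(\#\rho'_s - z^\bullet_{s-1}(0)) = n-k$, which is exactly the identity the paper uses.
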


\begin{proof}
	We recall the statement of Conjecture~\ref{conj:gen-valley-delta-touching}, which is
	\[ \Delta_{h_m} \Theta_{e_k} \nabla E_{n-k, r} = \sum_{\pi \in \LD(m, n \backslash r)^{\bullet k}} q^{\dinv(\pi)} t^{\area(\pi)} x^\pi. \]
	Applying Corollary~\ref{cor:square-to-dyck}, we have
	\[ \frac{[n-k]_q}{[r]_q} \Delta_{h_m} \Theta_{e_k} \nabla E_{n-k, r} = \sum_{\pi \in \LSQ'(m, n \backslash r)^{\bullet k}} q^{\dinv(\pi)} t^{\area(\pi)} x^\pi. \]
	Summing over $r$ and using Proposition~\ref{prop:pn_Enk}, we get
	\[ \Delta_{h_m} \Theta_{e_k} \nabla \omega(p_{n-k}) = \sum_{\pi \in \LSQ'(m,n)^{\bullet k}} q^{\dinv(\pi)} t^{\area(\pi)} x^\pi, \]
	as desired.
\end{proof}
\section{Concluding remarks}\label{sec:concluding}
As we mentioned before, the slightly contrived conditions on the positions of the steps labelled with zeros can be reformulated quite naturally by considering a step labelled $0$ as the ``pushing'' of a step labelled $\infty$. 

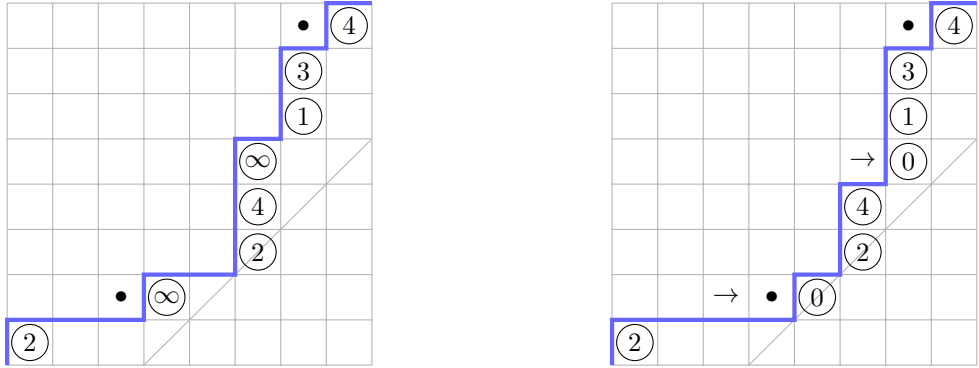
\begin{figure}[!ht]
  \begin{minipage}{0.5\textwidth}
    \centering
    \begin{tikzpicture}[scale = 0.6]
      \draw[step=1.0, gray!60, thin] (0,0) grid (8,8);
      \draw[gray!60, thin] (3,0) -- (8,5);
      
      \draw[blue!60, line width=1.6pt] (0,0) -- (0,1) -- (1,1) -- (2,1) -- (3,1) -- (3,2) -- (4,2) -- (5,2) -- (5,3) -- (5,4) -- (5,5) -- (6,5) -- (6,6) -- (6,7) -- (7,7) -- (7,8) -- (8,8);
      
      \node at (2.5,1.5) {$\bullet$};
      \node at (6.5,7.5) {$\bullet$};
      
      \node at (0.5,0.5) {$2$};
      \draw (0.5,0.5) circle (.4cm); 
      \node at (3.5,1.5) {$\infty$};
      \draw (3.5,1.5) circle (.4cm); 
      \node at (5.5,2.5) {$2$};
      \draw (5.5,2.5) circle (.4cm); 
      \node at (5.5,3.5) {$4$};
      \draw (5.5,3.5) circle (.4cm); 
      \node at (5.5,4.5) {$\infty$};
      \draw (5.5,4.5) circle (.4cm); 
      \node at (6.5,5.5) {$1$};
      \draw (6.5,5.5) circle (.4cm); 
      \node at (6.5,6.5) {$3$};
      \draw (6.5,6.5) circle (.4cm); 
      \node at (7.5,7.5) {$4$};
      \draw (7.5,7.5) circle (.4cm);
      \end{tikzpicture}
  \end{minipage}%
  \begin{minipage}{0.5\textwidth}
    \centering
    \begin{tikzpicture}[scale = 0.6]
      \draw[step=1.0, gray!60, thin] (0,0) grid (8,8);
      \draw[gray!60, thin] (3,0) -- (8,5);
      
      \draw[blue!60, line width=1.6pt] (0,0) -- (0,1) -- (1,1) -- (2,1) -- (3,1) -- (4,1) -- (4,2) -- (5,2) -- (5,3) -- (5,4) -- (6,4) -- (6,5) -- (6,6) -- (6,7) -- (7,7) -- (7,8) -- (8,8);
      
      \node at (3.5,1.5) {$\bullet$};
      \node at (6.5,7.5) {$\bullet$};
            
      \node at (2.5,1.5) {$\rightarrow$};
      \node at (5.5,4.5) {$\rightarrow$};
      
      \node at (0.5,0.5) {$2$};
      \draw (0.5,0.5) circle (.4cm); 
      \node at (4.5,1.5) {$0$};
      \draw (4.5,1.5) circle (.4cm); 
      \node at (5.5,2.5) {$2$};
      \draw (5.5,2.5) circle (.4cm); 
      \node at (5.5,3.5) {$4$};
      \draw (5.5,3.5) circle (.4cm); 
      \node at (6.5,4.5) {$0$};
      \draw (6.5,4.5) circle (.4cm); 
      \node at (6.5,5.5) {$1$};
      \draw (6.5,5.5) circle (.4cm); 
      \node at (6.5,6.5) {$3$};
      \draw (6.5,6.5) circle (.4cm); 
      \node at (7.5,7.5) {$4$};
      \draw (7.5,7.5) circle (.4cm);
    \end{tikzpicture}
  \end{minipage}
  \caption{``Pushing'' of $\infty$'s.}
  \label{fig:pushing-algorithm}
\end{figure}
Performing this manoeuvre does not change the dinv (if we define that the $\infty$'s under the main diagonal do not contribute to the bonus dinv and that there are no $\infty$'s on the base diagonal). The area changes by a constant factor equal to the number of zeros. 

Several open problems arise from our discussion. There is no interpretation of the symmetric function $\Delta_{h_m} \Theta_{e_k} \nabla \omega(p_{n-k})$ in terms of rise-decorated square paths, for which also the schedule formula is lacking. This is one of the very few instances where the valley version seems to be easier to treat than the rise version. Understanding the rise version better might lead to a unified valley-rise conjecture interpreting $\Theta_{e_j} \Theta_{e_k} \nabla e_{n-k-j}$. 

Lastly, it would be nice to show that the valley Delta conjecture implies the generalised valley Delta conjecture. Given that, our results would be conditional only on the valley Delta conjecture. There might be a way to prove this using the ``pushing'' manoeuvre described above to interpret the behaviour of the $h_j^\perp$ operator. We have some symmetric function identities suggesting that this avenue might be fruitful, and some of these conjectural identities are strongly suggested by certain relations among the combinatorial objects.


\bibliographystyle{amsalpha}
\bibliography{bibliography}

@InCollection{Bergeron-Garsia-ScienceFiction-1999,
  author        = {Bergeron, Fran\c{c}ois and Garsia, Adriano M.},
  title         = {Science fiction and {M}acdonald's polynomials},
  booktitle     = {Algebraic methods and {$q$}-special functions (Montr\'eal, {QC}, 1996)},
  publisher     = {Amer. Math. Soc., Providence, RI},
  year          = {1999},
  volume        = {22},
  series        = {CRM Proc. Lecture Notes},
  pages         = {1--52},
  date-added    = {2018-01-23 11:09:38 +0000},
  date-modified = {2018-01-23 11:09:38 +0000},
  mrclass       = {20C30 (05E05 33D67 81R05)},
  mrnumber      = {1726826},
  mrreviewer    = {Yasmine B. Sanderson},
}

@Article{Bergeron-Garsia-Haiman-Tesler-Positivity-1999,
  author        = {Bergeron, Fran\c{c}ois and Garsia, Adriano M. and Haiman, Mark and Tesler, Glenn},
  title         = {Identities and positivity conjectures for some remarkable operators in the theory of symmetric functions},
  journal       = {Methods Appl. Anal.},
  year          = {1999},
  volume        = {6},
  number        = {3},
  pages         = {363--420},
  issn          = {1073-2772},
  note          = {Dedicated to Richard A. Askey on the occasion of his 65th birthday, Part III},
  bdsk-url-1    = {https://doi.org/10.4310/MAA.1999.v6.n3.a7},
  date-added    = {2018-01-23 11:09:38 +0000},
  date-modified = {2018-01-23 11:09:38 +0000},
  file          = {:C\:/Users/Sasha/OneDrive - University of Pisa/PhD/References/Identities and positivity conjectures for some remarkable operators in the theory of symmetric functions.pdf:PDF},
  fjournal      = {Methods and Applications of Analysis},
  mrclass       = {05E05 (33D52)},
  mrnumber      = {1803316},
  mrreviewer    = {Ang\`ele M. Hamel},
  url           = {https://doi.org/10.4310/MAA.1999.v6.n3.a7},
}

@Article{Can-Loehr-2006,
  author        = {Can, Mahir and Loehr, Nicholas},
  title         = {A proof of the {$q,t$}-square conjecture},
  journal       = {J. Combin. Theory Ser. A},
  year          = {2006},
  volume        = {113},
  number        = {7},
  pages         = {1419--1434},
  issn          = {0097-3165},
  bdsk-url-1    = {https://doi.org/10.1016/j.jcta.2006.01.002},
  date-added    = {2018-01-24 15:43:39 +0000},
  date-modified = {2018-01-24 15:44:22 +0000},
  file          = {:C\:/Users/Sasha/OneDrive - University of Pisa/PhD/References/A proof of the q,t-square conjecture.pdf:PDF},
  fjournal      = {Journal of Combinatorial Theory. Series A},
  mrclass       = {05E05 (33D99)},
  mrnumber      = {2259069},
  mrreviewer    = {Eric S. Egge},
  url           = {https://doi.org/10.1016/j.jcta.2006.01.002},
}

@Article{Carlsson-Mellit-ShuffleConj-2018,
  author     = {Carlsson, Erik and Mellit, Anton},
  title      = {A proof of the shuffle conjecture},
  journal    = {J. Amer. Math. Soc.},
  year       = {2018},
  volume     = {31},
  number     = {3},
  pages      = {661--697},
  issn       = {0894-0347},
  bdsk-url-1 = {https://doi.org/10.1090/jams/893},
  doi        = {10.1090/jams/893},
  file       = {:C\:/Users/Sasha/OneDrive - University of Pisa/PhD/References/A proof of the shuffle conjecture.pdf:PDF},
  fjournal   = {Journal of the American Mathematical Society},
  mrclass    = {05E10 (05A30 05E05 33D52)},
  mrnumber   = {3787405},
  url        = {https://doi.org/10.1090/jams/893},
}

@Article{DAdderio-Iraci-VandenWyngaerd-GenDeltaSchroeder-2019,
  author  = {D'Adderio, Michele and Iraci, Alessandro and Vanden Wyngaerd, Anna},
  title   = {{The Schröder case of the generalized Delta conjecture}},
  journal = {European Journal of Combinatorics},
  year    = {2019},
  volume  = {81},
  pages   = {58 - 83},
  issn    = {0195-6698},
  doi     = {https://doi.org/10.1016/j.ejc.2019.04.004},
  file    = {:C\:/Users/Sasha/OneDrive - University of Pisa/PhD/References/The Schröder case of the generalized Delta conjecture.pdf:PDF},
  url     = {http://www.sciencedirect.com/science/article/pii/S0195669819300447},
}

@Article{DAdderio-Iraci-VandenWyngaerd-DeltaSquare-2019,
  author  = {D'Adderio, Michele and Iraci, Alessandro and Vanden Wyngaerd, Anna},
  title   = {{The Delta Square Conjecture}},
  journal = {International Mathematics Research Notices},
  year    = {2019},
  month   = {03},
  issn    = {1073-7928},
  doi     = {10.1093/imrn/rnz057},
  eprint  = {rnz057.pdf},
  file    = {:C\:/Users/Sasha/OneDrive - University of Pisa/PhD/References/The Delta Square Conjecture.pdf:PDF},
  url     = {https://doi.org/10.1093/imrn/rnz057},
}

@Article{DAdderio-Iraci-VandenWyngaerd-Delta-t0-2018,
  author        = {D'Adderio, Michele and Iraci, Alessandro and Vanden Wyngaerd, Anna},
  title         = {{The generalized Delta conjecture at t=0}},
  journal       = {arXiv e-prints},
  year          = {2019},
  pages         = {arXiv:1901.02788},
  month         = {Jan},
  adsnote       = {Provided by the SAO/NASA Astrophysics Data System},
  adsurl        = {https://ui.adsabs.harvard.edu/abs/2019arXiv190102788D},
  archiveprefix = {arXiv},
  eid           = {arXiv:1901.02788},
  eprint        = {1901.02788},
  file          = {:C\:/Users/Sasha/OneDrive - University of Pisa/PhD/References/The Generalized Delta Conjecture at t=0.pdf:PDF},
  keywords      = {Mathematics - Combinatorics, 05E05},
  primaryclass  = {math.CO},
  url           = {https://arxiv.org/abs/1901.02788},
}

@Article{DAdderio-Iraci-VandenWyngaerd-TheBible-2019,
  author     = {D'Adderio, Michele and Iraci, Alessandro and Vanden Wyngaerd, Anna},
  title      = {Decorated {D}yck paths, polyominoes, and the {D}elta conjecture},
  journal    = {Mem. Amer. Math. Soc.},
  year       = {2018},
  bdsk-url-1 = {https://doi.org/10.1016/S0001-8708(02)00061-0},
  file       = {:C\:/Users/Sasha/OneDrive - University of Pisa/PhD/References/Decorated Dyck paths, polyominoes, and the Delta conjecture.pdf:PDF},
  fjournal   = {Memoirs of the American Mathematical Society},
}

@Article{DAdderio-Iraci-VandenWyngaerd-Theta-2019,
  author        = {D'Adderio, Michele and Iraci, Alessandro and Vanden Wyngaerd, Anna},
  title         = {Theta operators, refined {D}elta conjectures, and coinvariants},
  journal       = {arXiv e-prints},
  year          = {2019},
  month         = {Jun},
  adsnote       = {Provided by the SAO/NASA Astrophysics Data System},
  adsurl        = {https://ui.adsabs.harvard.edu/abs/2019arXiv190602623D},
  archiveprefix = {arXiv},
  eid           = {arXiv:1906.02623},
  eprint        = {1906.02623},
  file          = {:Theta operators, refined Delta conjectures, and coinvariants.pdf:PDF},
  keywords      = {Mathematics - Combinatorics, 05E05},
  primaryclass  = {math.CO},
  url           = {https://arxiv.org/abs/1906.02623},
}

@Book{Haglund-Book-2008,
  title         = {The {$q$},{$t$}-{C}atalan numbers and the space of diagonal harmonics},
  publisher     = {American Mathematical Society, Providence, RI},
  year          = {2008},
  author        = {Haglund, James},
  volume        = {41},
  series        = {University Lecture Series},
  isbn          = {978-0-8218-4411-3; 0-8218-4411-3},
  note          = {With an appendix on the combinatorics of Macdonald polynomials},
  date-added    = {2018-01-23 11:09:38 +0000},
  date-modified = {2018-01-23 11:09:38 +0000},
  file          = {:The q,t-Catalan numbers and the space of diagonal harmonics.pdf:PDF},
  mrclass       = {05E05 (05A05 05A30 33D52)},
  mrnumber      = {2371044},
  mrreviewer    = {Michael A. Zabrocki},
  pages         = {viii+167},
}

@Article{HHLRU-2005,
  author        = {Haglund, James and Haiman, Mark and Loehr, Nicholas and Remmel, Jeffrey B. and Ulyanov, Anatoly},
  title         = {A combinatorial formula for the character of the diagonal coinvariants},
  journal       = {Duke Math. J.},
  year          = {2005},
  volume        = {126},
  number        = {2},
  pages         = {195--232},
  issn          = {0012-7094},
  bdsk-url-1    = {https://doi.org/10.1215/S0012-7094-04-12621-1},
  date-added    = {2018-01-23 11:09:38 +0000},
  date-modified = {2018-01-23 11:09:38 +0000},
  file          = {:C\:/Users/Sasha/OneDrive - University of Pisa/PhD/References/A combinatorial formula for the character of the diagonal coinvariants.pdf:PDF},
  fjournal      = {Duke Mathematical Journal},
  mrclass       = {05E10 (05A30 20C30)},
  mrnumber      = {2115257},
  mrreviewer    = {Edward E. Allen},
  url           = {https://doi.org/10.1215/S0012-7094-04-12621-1},
}

@Article{Haglund-Remmel-Wilson-2018,
  author   = {Haglund, James and Remmel, Jeffrey B. and Wilson, Andrew T.},
  title    = {The {D}elta {C}onjecture},
  journal  = {Trans. Amer. Math. Soc.},
  year     = {2018},
  volume   = {370},
  number   = {6},
  pages    = {4029--4057},
  issn     = {0002-9947},
  doi      = {10.1090/tran/7096},
  file     = {:The Delta Conjecture.pdf:PDF},
  fjournal = {Transactions of the American Mathematical Society},
  mrclass  = {05E05 (05E10)},
  mrnumber = {3811519},
  url      = {https://doi.org/10.1090/tran/7096},
}

@Article{Haglund-Rhoades-Shimozono-Advances,
  author     = {Haglund, James and Rhoades, Brendon and Shimozono, Mark},
  title      = {Ordered set partitions, generalized coinvariant algebras, and the {D}elta {C}onjecture},
  journal    = {Adv. Math.},
  year       = {2018},
  volume     = {329},
  pages      = {851--915},
  issn       = {0001-8708},
  bdsk-url-1 = {https://doi.org/10.1016/j.aim.2018.01.028},
  doi        = {10.1016/j.aim.2018.01.028},
  file       = {:Ordered set partitions, generalized coinvariant algebras, and the Delta Conjecture.pdf:PDF},
  fjournal   = {Advances in Mathematics},
  mrclass    = {05A18 (05E05)},
  mrnumber   = {3783430},
  url        = {https://doi.org/10.1016/j.aim.2018.01.028},
}

@Article{Haiman-nfactorial-2001,
  author        = {Haiman, Mark},
  title         = {Hilbert schemes, polygraphs and the {M}acdonald positivity conjecture},
  journal       = {J. Amer. Math. Soc.},
  year          = {2001},
  volume        = {14},
  number        = {4},
  pages         = {941--1006},
  issn          = {0894-0347},
  bdsk-url-1    = {https://doi.org/10.1090/S0894-0347-01-00373-3},
  date-added    = {2018-01-23 11:09:38 +0000},
  date-modified = {2018-01-23 11:09:38 +0000},
  file          = {:C\:/Users/Sasha/OneDrive - University of Pisa/PhD/References/Hilbert Schemes, Polygraphs, and the Macdonald Positivity Conjecture.pdf:PDF},
  fjournal      = {Journal of the American Mathematical Society},
  mrclass       = {14C05 (05E05 20C30 33D45)},
  mrnumber      = {1839919},
  mrreviewer    = {Claudio Procesi},
  url           = {https://doi.org/10.1090/S0894-0347-01-00373-3},
}

@Article{Haiman-Vanishing-2002,
  author        = {Haiman, Mark},
  title         = {Vanishing theorems and character formulas for the {H}ilbert scheme of points in the plane},
  journal       = {Invent. Math.},
  year          = {2002},
  volume        = {149},
  number        = {2},
  pages         = {371--407},
  issn          = {0020-9910},
  bdsk-url-1    = {https://doi.org/10.1007/s002220200219},
  date-added    = {2018-01-23 11:09:38 +0000},
  date-modified = {2018-01-23 11:09:38 +0000},
  file          = {:Vanishing theorems and character formulas for the Hilbert scheme of points in the plane.pdf:PDF},
  fjournal      = {Inventiones Mathematicae},
  mrclass       = {14C05 (05E05 14F17 14R20)},
  mrnumber      = {1918676},
  mrreviewer    = {Claudio Procesi},
  url           = {https://doi.org/10.1007/s002220200219},
}

@Article{Leven-2016,
  author     = {Sergel, Emily},
  title      = {A proof of the {S}quare {P}aths {C}onjecture},
  journal    = {J. Combin. Theory Ser. A},
  year       = {2017},
  volume     = {152},
  pages      = {363--379},
  issn       = {0097-3165},
  doi        = {10.1016/j.jcta.2017.06.013},
  file       = {:C\:/Users/Sasha/OneDrive - University of Pisa/PhD/References/A proof of the Square Paths Conjecture.pdf:PDF},
  fjournal   = {Journal of Combinatorial Theory. Series A},
  mrclass    = {05E05 (05E15)},
  mrnumber   = {3682738},
  mrreviewer = {Edward E. Allen},
  url        = {https://doi.org/10.1016/j.jcta.2017.06.013},
}

@Article{Loehr-Warrington-square-2007,
  author        = {Loehr, Nicholas A. and Warrington, Gregory S.},
  title         = {Square {$q,t$}-lattice paths and {$\nabla(p_n)$}},
  journal       = {Trans. Amer. Math. Soc.},
  year          = {2007},
  volume        = {359},
  number        = {2},
  pages         = {649--669},
  issn          = {0002-9947},
  bdsk-url-1    = {https://doi.org/10.1090/S0002-9947-06-04044-X},
  date-added    = {2018-01-24 15:53:30 +0000},
  date-modified = {2018-01-24 15:54:02 +0000},
  file          = {:Square q,t-lattice paths and ∇(p_n).pdf:PDF},
  fjournal      = {Transactions of the American Mathematical Society},
  mrclass       = {05E10 (20C30)},
  mrnumber      = {2255191},
  mrreviewer    = {Kendra Killpatrick},
  url           = {https://doi.org/10.1090/S0002-9947-06-04044-X},
}

@Book{Macdonald-Book-1995,
  title         = {Symmetric functions and {H}all polynomials},
  publisher     = {The Clarendon Press, Oxford University Press, New York},
  year          = {1995},
  author        = {Macdonald, Ian G.},
  series        = {Oxford Mathematical Monographs},
  edition       = {Second},
  isbn          = {0-19-853489-2},
  note          = {With contributions by A. Zelevinsky, Oxford Science Publications},
  date-added    = {2018-01-23 11:09:38 +0000},
  date-modified = {2018-01-23 11:09:38 +0000},
  file          = {:Symmetric Functions and Hall Polynomials.pdf:PDF},
  mrclass       = {05E05 (05-02 20C30 20C33 20K01 33C80 33D80)},
  mrnumber      = {1354144},
  mrreviewer    = {John R. Stembridge},
  pages         = {x+475},
}

@Article{Remmel-Wilson-2015,
  author     = {Remmel, Jeffrey B. and Wilson, Andrew T.},
  title      = {An extension of {M}ac{M}ahon's equidistribution theorem to ordered set partitions},
  journal    = {J. Combin. Theory Ser. A},
  year       = {2015},
  volume     = {134},
  pages      = {242--277},
  issn       = {0097-3165},
  doi        = {10.1016/j.jcta.2015.03.012},
  file       = {:C\:/Users/Sasha/OneDrive - University of Pisa/PhD/References/An extension of MacMahon's equidistribution theorem to ordered set partitions.pdf:PDF},
  fjournal   = {Journal of Combinatorial Theory. Series A},
  mrclass    = {05A18 (05A15)},
  mrnumber   = {3345306},
  mrreviewer = {Damir Yeliussizov},
  url        = {https://doi.org/10.1016/j.jcta.2015.03.012},
}

@Article{Rhoades-2018,
  author     = {Rhoades, Brendon},
  title      = {Ordered set partition statistics and the {D}elta {C}onjecture},
  journal    = {J. Combin. Theory Ser. A},
  year       = {2018},
  volume     = {154},
  pages      = {172--217},
  issn       = {0097-3165},
  bdsk-url-1 = {https://doi.org/10.1016/j.jcta.2017.08.017},
  doi        = {10.1016/j.jcta.2017.08.017},
  file       = {:Ordered set partitions statistics and the Delta Conjecture.pdf:PDF},
  fjournal   = {Journal of Combinatorial Theory. Series A},
  mrclass    = {05E05 (05A05 05A18)},
  mrnumber   = {3718065},
  url        = {https://doi.org/10.1016/j.jcta.2017.08.017},
}

@Article{Wilson-Equidistribution,
  author     = {Wilson, Andrew T.},
  title      = {An extension of {M}ac{M}ahon's equidistribution theorem to ordered multiset partitions},
  journal    = {Electron. J. Combin.},
  year       = {2016},
  volume     = {23},
  number     = {1},
  pages      = {Paper 1.5, 21},
  issn       = {1077-8926},
  file       = {:C\:/Users/Sasha/OneDrive - University of Pisa/PhD/References/An extension of MacMahon's equidistribution theorem to ordered multiset partitions.pdf:PDF},
  fjournal   = {Electronic Journal of Combinatorics},
  mrclass    = {05A19 (05A05 05A18 05E05)},
  mrnumber   = {3484710},
  mrreviewer = {Zhicong Lin},
  url        = {https://www.combinatorics.org/ojs/index.php/eljc/article/view/v23i1p5/pdf},
}

@Article{Zabrocki-4Catalan-2016,
  author        = {Zabrocki, Mike},
  title         = {A proof of the {$4$}-variable {C}atalan polynomial of the {D}elta conjecture},
  journal       = {ArXiv e-prints},
  year          = {2016},
  month         = sep,
  adsnote       = {Provided by the SAO/NASA Astrophysics Data System},
  archiveprefix = {arXiv},
  date-added    = {2018-01-23 11:09:38 +0000},
  date-modified = {2018-01-23 11:09:38 +0000},
  eprint        = {1609.03497},
  file          = {:C\:/Users/Sasha/OneDrive - University of Pisa/PhD/References/A proof of the 4-variable Catalan polynomial of the Delta conjecture.pdf:PDF},
  keywords      = {Mathematics - Combinatorics, 05E05},
  primaryclass  = {math.CO},
  url           = {https://arxiv.org/abs/1609.03497},
}

@Article{Zabrocki-Delta-Module-2019,
  author        = {{Zabrocki}, Mike},
  title         = {A module for the {D}elta conjecture},
  journal       = {arXiv e-prints},
  year          = {2019},
  pages         = {arXiv:1902.08966},
  month         = {Feb},
  adsnote       = {Provided by the SAO/NASA Astrophysics Data System},
  archiveprefix = {arXiv},
  eid           = {arXiv:1902.08966},
  eprint        = {1902.08966},
  file          = {:C\:/Users/Sasha/OneDrive - University of Pisa/PhD/References/A module for the Delta conjecture.pdf:PDF},
  keywords      = {Mathematics - Combinatorics, 05E05, 05E10, 20C30},
  primaryclass  = {math.CO},
  url           = {https://arxiv.org/abs/1902.08966},
}

@Article{Garsia-Haglund-Remmel-Yoo-2019,
  author   = {Garsia, Adriano and Haglund, Jim and Remmel, Jeffrey B. and Yoo, Meesue},
  title    = {A {P}roof of the {D}elta {C}onjecture {W}hen {$q=0$}},
  journal  = {Annals of Combinatorics},
  year     = {2019},
  volume   = {23},
  number   = {2},
  pages    = {317--333},
  month    = {Jun},
  issn     = {0219-3094},
  abstract = {In Haglund et al. (Trans. Amer. Math. Soc. 370(6):4029--4057, 2018), Haglund, Remmel and Wilson introduce a conjecture which gives a combinatorial prediction for the result of applying a certain operator to an elementary symmetric function. This operator, defined in terms of its action on the modified Macdonald basis, has played a role in work of Garsia and Haiman on diagonal harmonics, the Hilbert scheme, and Macdonald polynomials (Garsia and Haiman in J. Algebraic Combin. 5:191--244, 1996; Haiman in Invent. Math. 149:371--407, 2002). The Delta Conjecture involves two parameters q, t; in this article we give the first proof that the Delta Conjecture is true when q=0 or t=0.},
  day      = {01},
  doi      = {10.1007/s00026-019-00426-x},
  file     = {:C\:/Users/Sasha/OneDrive - University of Pisa/PhD/References/A Proof of the Delta Conjecture When q=0.pdf:PDF},
  url      = {https://doi.org/10.1007/s00026-019-00426-x},
}

@Article{Haglund-Sergel-2019,
  author        = {Haglund, James and Sergel, Emily},
  title         = {Schedules and the {D}elta {C}onjecture},
  journal       = {arXiv e-prints},
  year          = {2019},
  month         = {Aug},
  adsnote       = {Provided by the SAO/NASA Astrophysics Data System},
  archiveprefix = {arXiv},
  eid           = {arXiv:1908.04732},
  eprint        = {1908.04732},
  file          = {:Schedules and the Delta Conjecture.pdf:PDF},
  keywords      = {Mathematics - Combinatorics, Mathematics - Representation Theory, 05E10, 05E05},
  primaryclass  = {math.CO},
  url           = {https://arxiv.org/pdf/1908.04732.pdf},
}

@Misc{Qiu-Wilson-2019,
  author        = {Dun Qiu and Andrew Timothy Wilson},
  title         = {The valley version of the {E}xtended {D}elta {C}onjecture},
  year          = {2019},
  archiveprefix = {arXiv},
  eprint        = {1907.00268},
  primaryclass  = {math.CO},
}

\end{document}